\documentclass{amsart}
\usepackage{amsmath}
\usepackage{amsthm}
\usepackage{amsbsy}
\usepackage{amsopn}
\usepackage{amsmath,amscd}
\usepackage{amssymb}
\usepackage{amsfonts}
\usepackage{multirow}
\usepackage[official ]{eurosym}

\newcommand{\ki}[2]{{\color{green!80!black} #1
}{\color{blue!90!black}#2 
}}

\newcommand{\ve}{\varepsilon}

\newcommand{\cI}{\mathcal{I}}

\newcommand{\bC}{\mathbb{C}}

\newcommand{\bR}{\mathbb{R}}

\newcommand{\bZ}{\mathbb{Z}}

\newcommand{\wh}{\widehat}
\newcommand{\R}{\mathbb R}
\newcommand{\eps}{\epsilon}
\newcommand{\p}{\partial}
\newcommand{\ol}{\overline}

\newcommand{\wt}{\widetilde}
\newcommand{\sign}{\mathrm{sign}}


\usepackage{graphicx}

  \newcommand{\mat}[1]{\ensuremath{
\left[\begin{matrix}#1
\end{matrix}\right]
}}

\usepackage[dvipsnames,svgnames,x11names,hyperref]{xcolor}
\usepackage{url,graphicx,verbatim,amssymb,enumerate,stmaryrd}
\usepackage[pagebackref,colorlinks,citecolor=Bittersweet,linkcolor=Bittersweet,urlcolor=Bittersweet,filecolor=Bittersweet]{hyperref}
\usepackage{tikz} 
\usetikzlibrary{arrows,decorations.pathmorphing,backgrounds,fit,positioning,shapes.symbols,chains,calc}
\usepackage[all]{xy}
\xyoption{matrix}
\xyoption{arrow}
\usepackage[hmargin=3cm,vmargin=3cm]{geometry}
\usepackage{setspace,kantlipsum}
\tikzset{help lines/.style={step=#1cm,very thin, color=gray},
help lines/.default=.5} 

\usepackage{booktabs}

\newtheorem{theorem}{Theorem}[section]
\newtheorem{lemma}[theorem]{Lemma}
\newtheorem{proposition}[theorem]{Proposition}
\newtheorem{corollary}[theorem]{Corollary}
\newtheorem{claim}[theorem]{Claim}
\theoremstyle{definition}
\newtheorem{definition}[theorem]{Definition}
\newtheorem{inductive step}[theorem]{Inductive step}

\theoremstyle{remark}

\newtheorem{example}[theorem]{Example}
\newtheorem{inductive lemma}[theorem]{Inductive Lemma}
\newtheorem{remark}[theorem]{Remark}

\setcounter{tocdepth}{1}

\bibliographystyle{amsalpha}
\sloppy
\title{Geomorphology of Lagrangian ridges}

\author{Daniel  \'Alvarez-Gavela}
\address{Department of Mathematics \\ Massachusetts Institute of Technology \\ Cambridge, MA, 02139}
\email{dgavela@mit.edu}
\thanks{DA was partially supported by NSF grant DMS-1638352 and the Simons Foundation}

\author{Yakov Eliashberg }
\address{Department of Mathematics\\Stanford University \\ Stanford, CA 94305}
\email{eliash@stanford.edu}
\thanks{YE was partially supported by NSF grant DMS-1807270. }

\author{David Nadler}
\address{Department of Mathematics\\University of California, Berkeley\\Berkeley, CA  94720-3840}
\email{nadler@math.berkeley.edu}
\thanks{DN was partially supported by NSF grant DMS-1802373.}

\begin{document}
\maketitle
\begin{abstract}
We prove an ``h-principle without pre-conditions" for the elimination of tangencies of a Lagrangian submanifold with respect to a Lagrangian distribution. The main result states that such  tangencies can always be completely removed at the cost of allowing the  Lagrangian to develop certain non-smooth points, called  {\em Lagrangian ridges}, modeled on the corner $\{p=|q|\} \subset \bR^2$ together with its products and stabilizations. This result plays an essential role in the arborealization program.

 \end{abstract}
 
 \onehalfspacing
 \tableofcontents
 
 \section{Introduction}
 
  \subsection{Overview}

Let $L$ be a smooth compact Lagrangian submanifold of a symplectic manifold $(M, \omega)$.   The object of interest in this article consists of the tangencies of $L$ with respect to a field of Lagrangian planes $\gamma \subset TM$. When $\gamma$ is tangent to the fibres of a Lagrangian fibration $M \to B$ these tangencies are the same as singular points of the smooth map $L \to B$. If $\omega=d\lambda$ and $L$ is exact, then these tangencies are also the same as the singular points of the Legendrian front $\wh L \to B \times \bR$, where $\wh L$ is the Legendrian lift of $L$ in the contactization $M \times \bR$. The image $\Sigma \subset B \times \bR$ of the singular locus is known as the caustic in the literature \cite{A90}. 

In this article we present a method which allows for the complete elimination of tangencies of $L$ with respect to $\gamma$ via a geometric deformation of $L$. The precise statement is given in our main result Theorem \ref{theorem: main theorem}, which does not require any hypothesis on $\gamma$ and hence may be thought of as an  ``h-principle without pre-conditions''. 


Our viewpoint is local on $L$. As the deformation will always be done in a neighborhood of the given Lagrangian $L$ we can assume that the symplectic manifold $M$ is the cotangent bundle $T^*L$ endowed with the standard symplectic form $\omega=d(pdq)$. All considered Lagrangians $Y \subset T^*L$ will be exact, i.e. $pdq|_Y=dh$, and hence could be lifted  to  Legendrian submanifolds
$ \Lambda =\{(x,h(x)),\;x\in Y\}\subset T^*L\times\R=J^1L$, where 
$J^1L$ is endowed  with the standard contact structure $\xi=\{dz-pdq=0\}$. 



Even if $\gamma$ is integrable, i.e. tangent to a Lagrangian foliation, it is well-known that $C^\infty$-generic Lagrangian tangency singularities  are in general non-classifiable, see \cite{AGV85}.  However, if  certain homotopical conditions given in terms of the homotopy class of the Lagrangian plane field $\gamma|_L$ are met, then by work of the first author \cite{AG18b} the tangencies can be reduced to the simplest ones of the so-called fold type  via a $C^0$-small Hamiltonian isotopy, see Section \ref{sec:h-principle} below. In the presence of homotopical obstructions the higher Lagrangian tangency singularities  cannot be removed by means of a Hamiltonian isotopy, so any attempt at removing them must allow for deformations of the Lagrangian submanifold more dramatic than a Hamiltonian isotopy.

As a first step towards removing Lagrangian tangencies, note that one can trade Lagrangian fold tangencies with respect to $\gamma$ for corner singularities of the Lagrangian itself, which are transverse to $\gamma$ in the sense that every Lagrangian tangent plane is transverse to $\gamma$. Namely, consider the 1-dimensional Lagrangian submanifold $\{q=p^2\} \subset \R^2$, which has a fold tangency with respect to the vertical distribution $\gamma=\{dq=0\}$. We can replace this smooth Lagrangian submanifold with the piecewise smooth Lagrangian submanifold $\{q=|p|\}$, which we call the order 1 ridge, and which is transverse to $\gamma$. See Figure \ref{foldridge} for an illustration, where it is also shown how one may interpolate between the two models while preserving exactness.

 It follows from this discussion that if the homotopical conditions for removing higher tangencies are satisfied by $\gamma|_L$, then we may completely eliminate the tangencies by replacing the fold tangencies produced by the h-principle \cite{AG18b} with order 1 ridges.

Our main result Theorem \ref{theorem: main theorem} shows more generally that by  creating certain standard combinatorial singularities called {\em ridges} one can always make a Lagrangian $L$ transverse to a Lagrangian distribution $\gamma$, even without any homotopical pre-conditions on $\gamma|_L$.  The deformation consists of two steps: first the necessary ridges are introduced via a local model and then the resulting  piecewise smooth Lagrangian is further deformed by a Hamiltonian isotopy.  
Note that a posteriori all the results can be reformulated back  in  the smooth category by smoothing the ridges.

\subsection{Lagrangian ridges and ridgy isotopies}

We now define ridgy Lagrangians and ridgy isotopies. In the standard symplectic $\bR^2=T^*\bR$ consider the subset $R=\{ pq=0 ; \, \, q  \geq 0, p \geq 0 \}$, which we call the model ridge of order 1, see Figure \ref{modelridge}.

\begin{definition} The model ridge of order $k$ in the standard symplectic $\bR^{2n}=T^*\bR^n$ is defined to be the product $R_{k,n} = R^k  \times \bR^{n-k} \subset (T^*\bR)^k \times T^*\bR^{n-k}$, $0 \leq k \leq n$, i.e the $(n-k)$-fold stabilization of $R^k=R \times \cdots \times R$ ($k$ times). \end{definition}

\begin{figure}[h]
\includegraphics[scale=0.6]{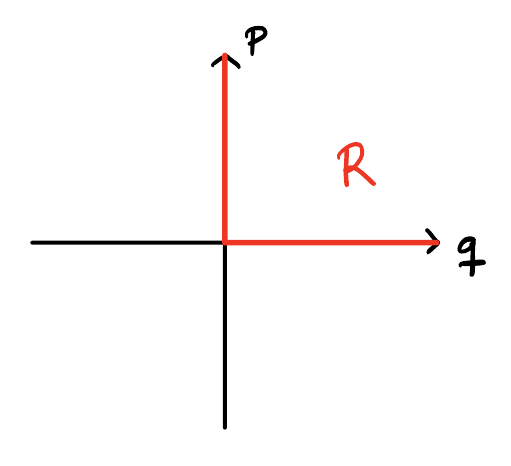}
\caption{The model ridge $R \subset T^*\bR$. Note that $R$ is the union of the half-line $\{p=0, \, q \geq 0 \}$ together with the inner conormal $\{q=0, \, p \geq 0 \}$ of its boundary point $q=0$. The model $R$ is symplectomorphic to $\{p=|q|\} \subset T^*\bR$.}
\label{modelridge}
\end{figure}

\begin{example}
The order $n$ ridge $R_{n,n} \subset T^*\bR^n$ is the union to all the inner conormals of the faces of a quadrant in $\bR^n$, hence is the union of the $2^n$ linear Lagrangians $\{ p_j=q_k =0, \, q_j, p_k \geq 0 , \, j \in I, \, k \not \in I \}$, where $I \subset \{ 1, \ldots , n \}$, see Figure \ref{conormals}.
\end{example}

\begin{figure}[h]
\includegraphics[scale=0.5]{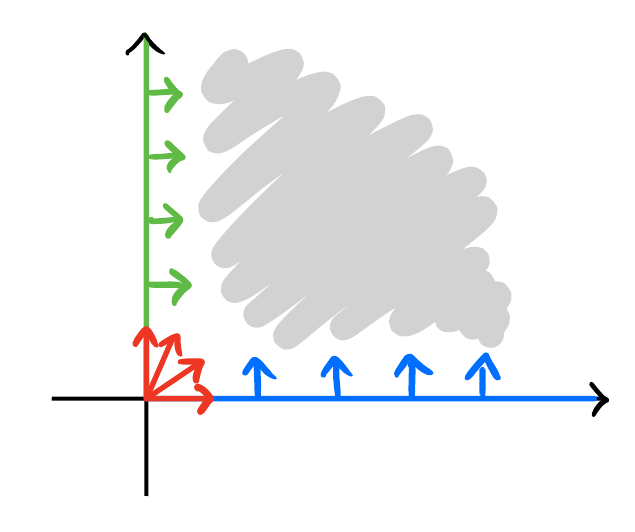}
\caption{The order 2 ridge $R_{2,2} \subset T^*\bR^2$ is the union of the inner conormals to the faces of the quadrant $\{q_1 \geq 0, \, q_2 \geq 0 \}$. In grey $\{p_1=p_2=0, \, q_1,q_2 \geq 0 \}$, in blue $\{p_1=q_2=0, \, q_1,p_2 \geq 0 \}$, in green $\{p_2=q_1=0, \, q_2,p_1 \geq 0 \}$ and in red $\{q_1=q_2=0, \, p_1,p_2 \geq 0 \}$. The model $R_{2,2}$ is symplectomorphic to $\{p=|q|\} \times \{p=|q|\}  \subset T^*\bR \times T^*\bR = T^*\bR^2$. }
\label{conormals}
\end{figure}
 
\begin{definition}
An $n$-dimensional \emph{ridgy Lagrangian} in a symplectic manifold $M$ is a closed subset $L \subset M$ which is covered by open subsets $U\subset M$ such that $(U, U \cap L)$ is symplectomorphic to some $(B, B \cap R_{k,n})$, for $B \subset \bR^{2n}$ a ball centered at the origin.
\end{definition} 

\begin{figure}[h]
\includegraphics[scale=0.62]{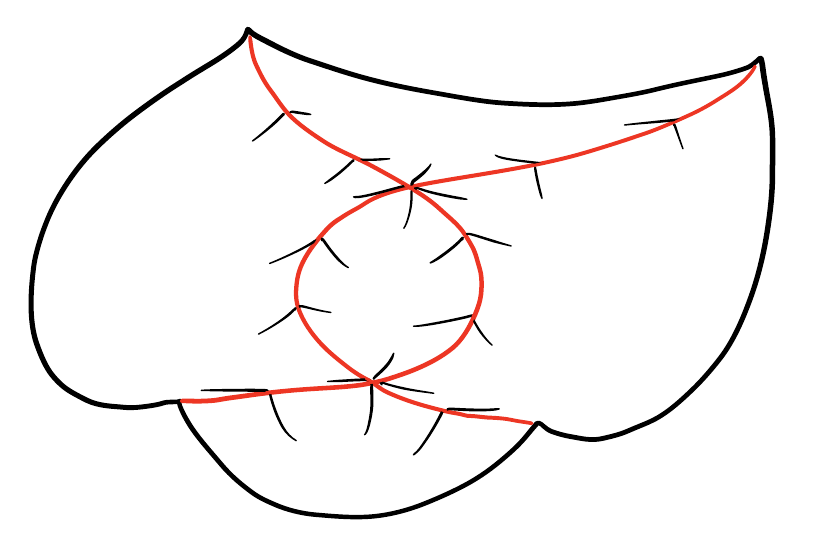}
\caption{A 2-dimensional ridgy Lagrangian has order 1 ridges along a union of immersed curves, which intersect each other (or themselves) at a discrete set of points where we have order 2 ridges.}
\label{ridgy}
\end{figure}

A ridgy Lagrangian has a natural stratification $L=R_0 \supset R_{1} \supset \cdots \supset R_n$, where $R_k$ is the locus of ridges of order $\geq k$, see Figure \ref{ridgy}. Note that the stratum  $R_k \setminus R_{k+1}$ is a smooth (open) isotropic submanifold of dimension $n-k$. 

\begin{definition} Let $L$ be a smooth Lagrangian submanifold in a symplectic manifold $M$.
\begin{enumerate}
\item
Let $N_1 , \ldots , N_m \subset L$  be co-oriented separating   hypersurfaces   defined by equations $\phi_j=0$ for some $C^\infty$-functions $\phi_j:L\to\R$ without critical points on $N_j$.  We assume that the $N_j$ are co-oriented by the outward transversals to the domains $\{\phi_j\leq 0\}$. We assume that the $N_j$ are mutually transverse, i.e. each $N_j$ is transverse to all possible intersections of the other $N_i$. Denote $\phi_j^+=\max(\phi_j,0)$ and choose a cut-off function $\theta_j$ which is equal to 1 on $N_j$ and to $0$ outside a neighborhood of $N_j$.  Define a function $\Phi:L \to \bR$ (which is $C^1$ and piecewise $C^\infty$) by the formula
 $$\Phi:=\sum\limits_{j=1}^m\theta_j\left(\phi_j^+\right)^2.$$ An {\em earthquake isotopy} with faults $N_j$  is defined as a family of Lagrangians $L_t$ given by the homotopy of generating functions $t\Phi$, i.e. $L_t=\{p=td\Phi\}$, $t\geq 0$, see Figure \ref{earthquake}.
\item A {\em ridgy isotopy} is   an earthquake isotopy followed by an ambient Hamiltonian isotopy.
 \end{enumerate}
 \end{definition}
Of course, the earthquake isotopy can be realized by an ambient Hamiltonian isotopy beginning from any $t>0$, i.e. for all $\ve > 0$ there exists a Hamiltonian isotopy $\varphi_t$ such that $L_{t +\ve} = \varphi_t(L_\ve)$, $t \geq 0$. 


\begin{figure}[h]
\includegraphics[scale=0.635]{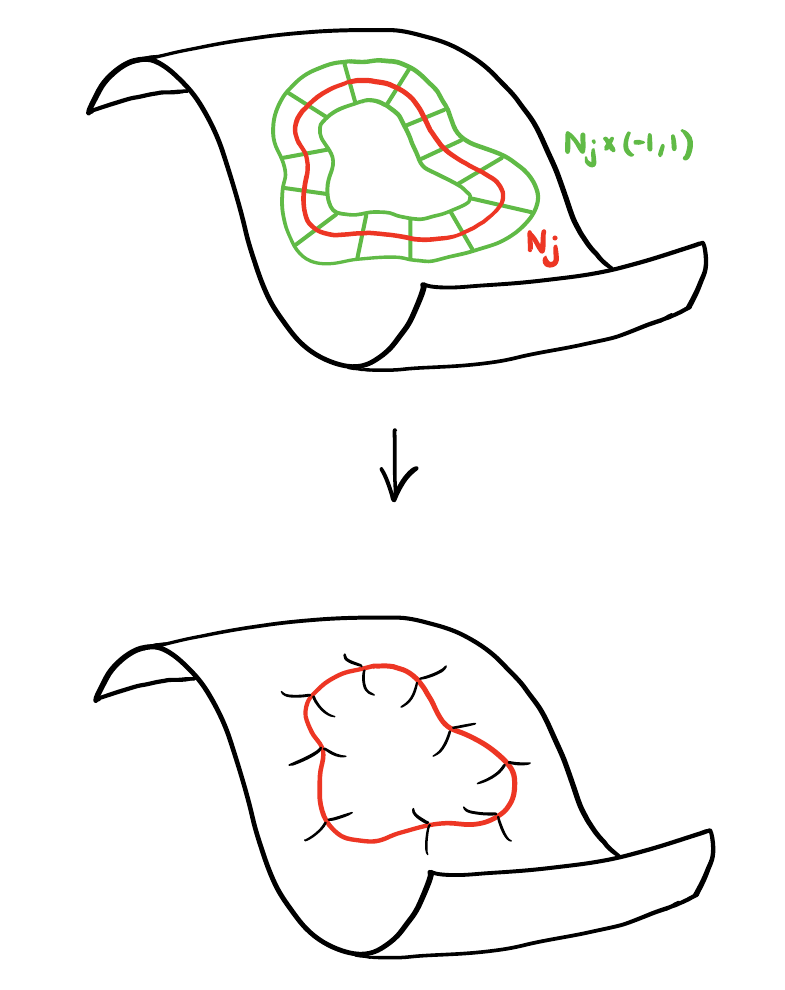}
\caption{An earthquake isotopy. Note that in general the hypersurfaces $N_j$ may intersect each other. A ridgy isotopy further deforms the result of an earthquake isotopy by a Hamiltonian isotopy.}
\label{earthquake}
\end{figure}


\subsection{Main results}

We can now state our main result, which we call the Ridgification Theorem. Recall that $L$ is a smooth, compact Lagrangian submanifold of a symplectic manifold $M$.
 
 \begin{theorem}\label{theorem: main theorem}
For any Lagrangian distribution $\gamma$ there exists a ridgy isotopy $L_t$ of $L$ such that $L_1 \pitchfork \gamma$.
 \end{theorem}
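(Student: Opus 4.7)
The plan is to proceed by a local-to-global construction: first show that transversality to $\gamma$ can be achieved locally via an earthquake with appropriately chosen faults, then patch the local earthquakes into a single global ridgy isotopy. The key observation is additivity: the composition of two earthquakes with fault sets $N_1,\ldots,N_m$ and $N'_1,\ldots,N'_k$ is itself an earthquake with fault set $N_1,\ldots,N_m,N'_1,\ldots,N'_k$, so local constructions compose naturally into a global one, provided the faults ultimately meet in general position.

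For the local model, I would work in a Darboux chart identifying a neighborhood of $x \in L$ with a ball in $T^*\bR^n$ so that $L$ corresponds to the zero section. Near a tangency point, the Lagrangian distribution $\gamma$ is controlled by its generating quadratic form relative to some transverse Lagrangian, and the tangency locus is cut out by the degeneracy of this form. The earthquake function $\Phi = \sum_j \theta_j (\phi_j^+)^2$ contributes on each stratum a positive-semidefinite term to the Hessian coming from the active faults, so the task reduces to arranging a configuration of hypersurfaces $\{\phi_j = 0\}$ whose co-normal directions span the degenerate directions of $\gamma$ in the appropriate pattern. In the simplest case of a fold tangency, a single fault along the image of the caustic suffices and recovers the model $\{p = |q|\}$; at cusp or swallowtail tangencies, intersecting families of faults produce the standard ridges $R_{k,n}$ at their common locus.

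The final step is to cover $L$ by finitely many Darboux charts, carry out the local earthquakes coherently on overlaps, and apply a closing Hamiltonian isotopy to remove any residual non-transversality produced by the partition of unity and cut-off functions. The main obstacle I anticipate is the local model itself: one must verify that an appropriate configuration of faults exists even at arbitrarily degenerate tangencies, exactly the regime where the homotopical obstructions of \cite{AG18b} forbid a smooth resolution. The extra flexibility afforded by the pointwise-positive contributions of the earthquake (as opposed to a smooth Hamiltonian perturbation) must be shown to overcome these obstructions; morally this is what makes Theorem \ref{theorem: main theorem} an \emph{h-principle without pre-conditions}. A secondary difficulty is coherence: assembling the local faults globally requires that their combined configuration remains transverse and in general position, so that their intersections produce precisely the standard model ridges $R_{k,n}$ rather than more exotic corner singularities.
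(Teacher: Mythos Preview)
Your proposal has a genuine gap at its core: an earthquake isotopy by itself does almost nothing to the tangent planes of $L$ away from the fault locus. Recall $L_t=\{p=t\,d\Phi\}$ with $\Phi=\sum_j\theta_j(\phi_j^+)^2$; on the interior of each plate (away from the $N_j$) this is the graph of a \emph{smooth} closed $1$-form, and for $t$ small its Gauss map is $C^0$-close to that of the zero section. So wherever $L$ was tangent to $\gamma$ on an open set, the earthquake leaves it essentially tangent there. The earthquake introduces the ridges, and only the ridges; it does not, and cannot, produce transversality on the smooth pieces. Your ``closing Hamiltonian isotopy to remove any residual non-transversality'' is therefore not a clean-up step but the entire problem: finding a Hamiltonian isotopy of a smooth Lagrangian piece making it transverse to $\gamma$ is exactly the obstructed problem of \cite{AG18b}, and it is only solvable because the ridges you have just inserted have changed the relative homotopy type of the Gauss map on each plate.

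The paper's architecture reflects this. It first constructs a purely linear-algebraic object, a \emph{tectonic field} (a piecewise field of quadratic forms with rank-$1$ jumps across hypersurfaces), transverse to $\gamma$; this is the inductive Lemma~\ref{inductive lemma}, working one rank-$1$ summand of $\gamma$ at a time. That formal solution is generically \emph{not} integrable: the ridge directions $\tau_j=\ker\ell_j$ need not be tangent to the faults $N_j$, and the obstruction to aligning them lives in $\pi_k(V_{n-k,n})$. Section~\ref{section: homotopically integrable solution} kills this obstruction by a local modification that creates extra spherical faults and simultaneously deforms $\gamma$ to a homotopic $\wh\gamma$. Only then, in Section~\ref{section: integrable solution}, is the actual ridgy Lagrangian produced: holonomic approximation (Theorem~1.20 of \cite{AG18a}) achieves genuine transversality near the ridge locus, and on the remaining smooth plates the wrinkling theorem of \cite{AG18b} applies---now legitimately, since the homotopical hypothesis holds there---with the resulting cusps converted back into order-$1$ ridges. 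Your sketch collapses all three stages into a single local earthquake, and in doing so skips both the alignment obstruction and the h-principle input that does the real work on the smooth strata.
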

 
 To be clear: the condition $L_1 \pitchfork \gamma$ means that for any $x \in L_1$ and for any Lagrangian plane $P \subset T_xM$ tangent to $L_1$ we have $P \pitchfork \gamma_x$. If $x \in L_1$ is a ridge of order $k$, there are $2^k$ such Lagrangian planes.
 

\begin{remark}
Theorem \ref{theorem: main theorem} also holds in the following variants:
\begin{enumerate}
\item $C^0$-close form: we can arrange it so that the ridgy isotopy $L_t$ is $C^0$-small. This means that given a fixed but arbitrary Riemannian metric on $M$, for any $\ve>0$ we can demand that $\text{dist}(x,f_t(x) )< \ve$ for $f_t:L \to L_t$ the parametrization of the ridgy isotopy $L_t$ which is graphical during the earthquake isotopy and then is given by the ambient Hamiltonian isotopy. In particular $L_t$ stays within a Weinstein neighborhood of $L$ in $M$.
 
\item  Relative form:  if $L \pitchfork 
\gamma$ on $Op(A)$ for $A \subset L$ a closed subset then we can demand that $L_t=L$ on $Op(A)$. Here and below we use Gromov's notation $Op(A)$ for an arbitrarily small but non-specified open neighborhood of $A$. 
\end{enumerate}
\end{remark}

We will  also prove in Section \ref{section: adapted version} an  adapted version of Theorem \ref{theorem: main theorem} relative to a collar structure in the case where $L$ has boundary and corners, assuming that $\gamma$ is itself adapted to that structure. This adapted version is essential for our applications. Indeed, together with the Stability Theorem for arboreal singularities proved by the authors in \cite{AGEN20a}, the collared version of Theorem \ref{theorem: main theorem} is one of the essential ingredients in our paper  \cite{AGEN20b} on the arborealization program~\cite{N15, N17, St18}, as well as in the forthcoming work \cite{AGEN21}.

\begin{remark}
By definition, the local geometry of a ridgy Lagrangian $L$ is given by the linear models $R_{k, n}$. The space of linear Lagrangian fields $\gamma$ transverse to $R_{k, n}$ has interesting moduli without evident canonical representatives. 
\end{remark}




 \subsection{h-principle for removing higher Lagrangian tangency singularities}\label{sec:h-principle}
 
 The problem of simplifying the tangency locus of a smooth Lagrangian submanifold $L \subset M$ with respect to a Lagrangian plane field $\gamma \subset TM$ was first studied by Entov \cite{En97}, who used the method of surgery of singularities to establish an h-principle for the class of $\Sigma^2$-nonsingular plane fields, i.e. those $\gamma$ for which $\dim(TL \cap \gamma) <2$. In \cite{AG18a} and \cite{AG18b} the methods of holonomic approximation and wrinkling were used by the first author to extend this h-principle to arbitrary Lagrangian plane fields. The simplest version of the h-principle can be formulated as follows.
 
 \begin{theorem}\label{theorem: caustics}
 Suppose that $\gamma$ is homotopic through Lagrangian plane fields to a Lagrangian plane field $\wh \gamma$ which is transverse to $L$. Then $L$ is Hamiltonian isotopic to a smooth Lagrangian submanifold $\wh L$ whose tangency singularities with respect to $\gamma$ consist only of folds.
 \end{theorem}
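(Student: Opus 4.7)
The plan is to work locally with $M = T^*L$ and the Legendrian lift $\Lambda \subset J^1L$ of the zero section $L$. The tangencies of $L$ with $\gamma$ are precisely the points where the Gauss map $\tau_L : x \mapsto T_xL$ lies in the Maslov subvariety $\cS_\gamma \subset \mathrm{LGr}(TM|_L)$ of Lagrangian planes non-transverse to $\gamma$. This subvariety is stratified by corank, and the tangency is of fold type exactly when $\tau_L$ meets its codimension-one top stratum transversely. Equivalently, via the front projection of $\Lambda$, folds are the generic caustic points while higher Lagrangian tangencies correspond to higher Thom--Boardman types of the projection.

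The homotopy from $\gamma$ to $\wh\gamma$ transverse to $L$ supplies formal data: a homotopy of $\tau_L$ through Lagrangian subbundles of $TM|_L$ into the open stratum. I would realize this formal homotopy as a genuine Hamiltonian isotopy in two stages, following Entov \cite{En97} and extended by the first author \cite{AG18a, AG18b}. The first stage is a surgery of singularities which, using the formal data, eliminates all corank-$\geq 2$ tangencies (the $\Sigma^{\geq 2}$-strata) and reduces the problem to corank one. The second stage uses a wrinkling construction to replace every remaining higher corank-one tangency of Arnold type $A_k$, $k \geq 2$, by a small cluster of fold tangencies: locally the $A_k$ front is substituted by a Legendrian wrinkle whose caustic is a closed cuspidal membrane bounding only $A_1$ points, and globally these replacements are assembled along the higher singular loci using the formal homotopy to propagate the model consistently.

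The main obstacle is carrying out the wrinkling modification within the class of Hamiltonian isotopies: a wrinkled Legendrian embedding must be realized via a generating-family deformation whose fibrewise Morse theory exhibits only fold critical points, with $C^0$-smallness controlled throughout. This is where holonomic approximation for the one-jet section of $\Lambda$ enters, so that the formal deformation dictated by the homotopy of $\gamma$ is made honest on a neighborhood of a suitably thickened spine carrying the tangency skeleton. Once both stages are executed, the resulting $\wh L$ is smooth, Hamiltonian isotopic to $L$, and has only fold tangencies with respect to the original $\gamma$, as required.
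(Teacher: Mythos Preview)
The paper does not give its own proof of this theorem: it is stated in Section~1.4 as a result established in \cite{AG18b} (extending Entov's earlier work \cite{En97} via the holonomic approximation of \cite{AG18a}), and is only used as background motivation. So there is no proof in the paper to compare your proposal against.

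That said, your sketch correctly names the ingredients---formal data from the homotopy of $\gamma$, holonomic approximation, wrinkling---but the two-stage architecture you describe is not how the argument in \cite{AG18b} actually runs. You propose first invoking Entov's surgery to kill the $\Sigma^{\geq 2}$ locus and then separately wrinkling the remaining corank-one $A_k$ points. In \cite{AG18b} there is no such division: the wrinkling theorem (quoted in the present paper as Theorem~\ref{thm: wrinkles}) is applied directly and globally, producing in one stroke a \emph{wrinkled} Lagrangian embedding transverse to $\gamma$. The wrinkles are spheres of cuspidal points with birth/death along an equator; one then surgers the equators and smooths the cusps to obtain a genuine smooth Lagrangian with only fold tangencies (the ``double folds'' mentioned in Section~4.2 here). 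Your local picture of replacing each $A_k$ by a nearby Legendrian wrinkle is not the mechanism; the wrinkles arise from a global $C^0$-approximation, not from singularity-by-singularity surgery.

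Your last paragraph also conflates two distinct issues. A wrinkled Lagrangian embedding is not smooth, so it is not produced by a Hamiltonian isotopy and there is no obstacle to ``carrying out the wrinkling within the class of Hamiltonian isotopies''---that is simply not what happens. The exact homotopy of wrinkled embeddings is first constructed, then the wrinkles are resolved to folds while preserving exactness, and only then does exactness guarantee that the endpoints are Hamiltonian isotopic. The generating-family and fibrewise-Morse language you invoke is not part of the argument.
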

 
 \begin{remark}
 More generally, it is enough to assume that $\gamma$ is homotopic to a $\wh \gamma$ with respect to which $L$ only has fold tangencies.
 \end{remark}
 
 The {\em fold} is the simplest type of  singularity. In the case where $\gamma$ is integrable, the germ is given by (a stabilization of) the local model $\{ q=p^2 \} \subset T^*\bR$, where the Lagrangian field is the vertical distribution $\gamma=\{dq=0\}$. If the tangency locus of $L$ with respect to $ \gamma$ consist only of folds, then $\dim(TL \cap \gamma) \leq 1$ and the tangency locus $\Sigma = \{ \dim(TL\cap \gamma)=1\}$ is a transversely cut out smooth hypersurface in $L$. Moreover, the line field $\ell= TL \cap \gamma$ is transverse to $\Sigma$ inside $TL$. These properties completely characterize the fold (also in the non-integrable case).

 \begin{remark}
 Even in the smooth (as opposed to symplectic) category, the elimination of folds is not usually possible. Moreover, while in the smooth category the only non-trivial constraints are on the topology of the image of the fold, see \cite{G09, G10}, where the only thing which matters is that the fold locus is non-empty, see \cite{E70}, in the symplectic case there are also constraints on the topology of folds (e.g. the number of its components) in the source Lagrangian, see \cite{En98}. See also \cite{FP98, FP06} for further constraints on the caustic locus.
 \end{remark}
 
The fold is closely related to the order 1 ridge. More precisely, observe that the 1-dimensional Lagrangian model $\{q=p^2 \}$ has a fold type tangency to the vertical Lagrangian distribution $\gamma=\{dq=0\}$, while the  ridgy Lagrangian $\{ q=|p|\}$ is transverse to $\gamma$.
 Let us take a cut-off function $\sigma:[0,\infty)\to[0,1]$ which is equal to $1$ on $[0,\frac12]$, equal to $0$ outside $[0,1]$. Define for $q \geq 0$  the generating function 
 $$
 z =\pm  \left( \sigma\left( \frac{q}{\eps} \right) \frac{q^2}{2 \eps} + \big( 1 - \sigma\left( \frac{q}{\eps}\right) \big) \frac{2q^{3/2}}{3} \right)
 $$
 
 which generates a Lagrangian $L_\ve \subset T^* \R$. Note that for any $\ve>0$, $L_\ve$ is a ridgy Lagrangian transverse to $\gamma$, while 
  $L_0=  \left\{q=p^2 \right\}$ is a smooth Lagrangian with a fold tangency singularity to $\gamma$ at the origin.

Note also that because we define the deformation at the level of generating functions, exactness is automatic, see Figure \ref{foldridge}.

  This deformation can be achieved by a ridgy isotopy: it is essentially an earthquake isotopy along the fold locus (strictly speaking, it needs to be corrected by a subsequent Hamiltonian isotopy to get the symmetry about the fold locus, but this is not important).

\begin{figure}[h]
\includegraphics[scale=0.5]{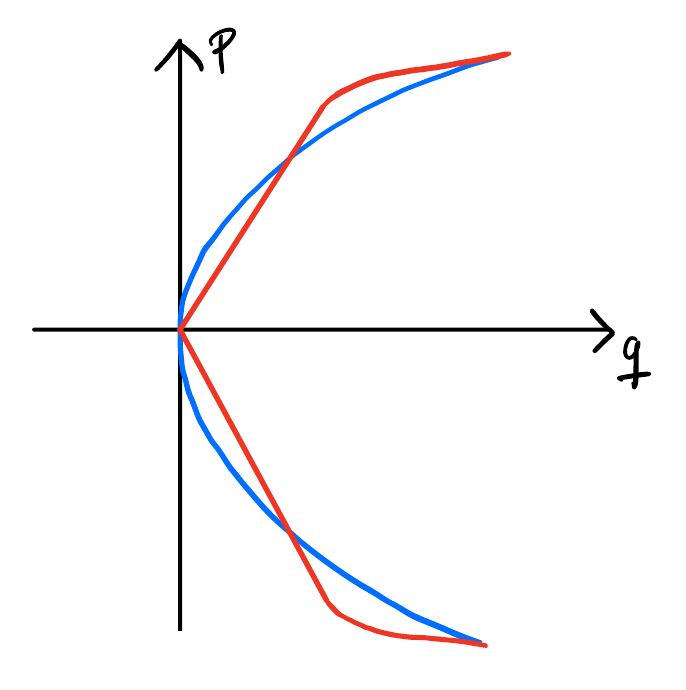}
\caption{A fold (blue) becomes a ridge (red). Exactness means that the area of the region bounded by the blue and the red curves is zero when counted with sign.}
\label{foldridge}
\end{figure}

If the deformation is performed close enough to the fold point (so that $\gamma$ is almost constant as in our local model), then the resulting ridgy Lagrangian is transverse to the Lagrangian plane field $\gamma$ with respect to which the smooth Lagrangian had a fold. Hence Theorem \ref{theorem: main theorem} is an immediate consequence of Theorem \ref{theorem: caustics} when $\gamma$ is homotopic to a Lagrangian plane field transverse to $L$.


Note that the above relation between folds and ridges only holds for order 1 ridges, i.e. stabilizations of the standard 1-dimensional ridge $R \subset T^*\bR$. Higher order ridges carry subtler homotopical information corresponding to the higher corank singularities $\Sigma^k$ and are necessary to overcome the homotopy theoretic obstruction to the simplification of singularities. Thus Theorem \ref{theorem: main theorem} shows the best one can do if nothing is known about the homotopy class of $\gamma$.

  \subsection{Structure of the article}
  
We begin our proof of Theorem \ref{theorem: main theorem}  by  showing  existence of  a formal solution, which is established in Section \ref{section: formal solution} by working one rank 1 form at a time. The resulting formal solution is then deformed to an integrable solution in two steps. First, in Section \ref{section: homotopically integrable solution} we align the ridge directions to the homotopy class necessary for integrability. Then in Section \ref{section: integrable solution} we integrate our formal solution and finish the proof of our main theorem. Finally, in Section \ref{section: adapted version} we explain a version adapted to a collar structure in the case where the Lagrangian has boundary and corners.
 
 \subsection{Acknowledgements}
 We are very grateful to Laura Starkston who collaborated with us on the initial stages of this project. 
The first author is grateful for the great working environment he enjoyed at the Institute for Advanced study and at Princeton University, as well as for the hospitality of CRM Montreal. 
The second author thanks RIMS Kyoto and ITS ETH Zurich for their hospitality. The third author thanks MSRI for its hospitality.
We are very grateful for the support of the American Institute of Mathematics, which hosted a workshop on the arborealization program in 2018 from which this project has greatly benefited.
We are grateful to the referee for numerous useful comments, suggestions and corrections.

\section{Formal solution}\label{section: formal solution}

 \subsection{Tectonic fields} We begin by introducing the notion of a tectonic field, which is the formal analogue of a ridgy Lagrangian. Recall that a polarization of a symplectic vector space $V$ consists of a pair of transverse linear Lagrangian subspaces $\tau, \nu \subset V$. For a fixed polarization $(\tau, \nu)$  there is a bijective correspondence between graphical linear Lagrangian subspaces of $V$ (i.e. transverse to $\nu$) and quadratic forms on $\tau$. Indeed, both can be thought of as symmetric linear maps $\tau \to \tau^*$, where by symmetric we mean equal to its own transpose under the canonical isomorphism $\tau^{**} \simeq \tau$. 
 
 We will repeatedly go back and forth between the two viewpoints. Note that given two graphical linear Lagrangian subspaces $\lambda_1, \lambda_2 \subset V$  we have  $\dim(\lambda_1 \cap \lambda_2)= \dim \ker(\lambda_1-\lambda_2)$. In particular,  $\lambda_1$ and  $\lambda_2$ are transverse if and only if $\lambda_1-\lambda_2$ is a nonsingular quadratic form on $\tau$.  
 Given a smooth manifold $L$, for any $x \in L$ there is a canonical polarization of $T_x(T^*L)$ given by $\tau=T_xL$ and $\nu=T^*_xL$. Hence we can identify graphical linear Lagrangian subspaces of $T_x(T^*L)$ with quadratic forms on $T_xL$. Via this identification, graphical Lagrangian plane fields on $T^*L$ defined along the zero section $L$ form a module over $C^\infty(L)$. 
 
\begin{remark}
By a Lagrangian plane field on $L$ we mean a field of Lagrangian planes in $T^*L$ defined along the zero section.  Similarly, by a field of quadratic forms on $L$ we will always mean a smooth family $\lambda_x$ of quadratic forms on $T_xL$, $x \in L$. This is the same as a graphical Lagrangian plane field.
\end{remark}
 
 \begin{definition} Suppose we are given  dividing, co-oriented embedded hypersurfaces $N_1, \ldots , N_k \subset L$. We assume that the $N_j$ are mutually transverse, i.e. each  $N_j$ is transverse to all possible intersections of the other $N_i$, $i \neq j$. A {\em tectonic field} $\lambda$ over $L$ with faults along $N_j$ is a   collection  of fields of quadratic forms $\lambda_Q$ over the closures $\overline{Q}$ of the components $Q\subset L\setminus\bigcup_j N_j$ such that  there exist non-vanishing 1-forms $\ell_j$ on $TL|_{N_j}$, $j=1,\dots, k,$ with the following property:
\begin{itemize}
\item 
for  any point point $x\in N_j\setminus\bigcup_j N_i$ we have
 $$\lambda_{Q_+} - \lambda_{Q_-}=  \ell_j^2,$$  where we denote by $C_\pm$ the components of $L\setminus\bigcup_j N_j$ adjacent to $x$ and where the co-orientation of $N_j$ points into $Q_+$.
 \end{itemize}

 The hypersurfaces $N_j$ are called \emph{faults}, the connected components $Q$ of $L \setminus \bigcup_j N_j$ are called \emph{plates} and the hyperplane fields $\tau_j$ are called \emph{ridge directions}. We will moreover demand that the following transversality condition is satisfied:
 \begin{itemize}
 \item[] Along each intersection $N_{j_1} \cap \cdots \cap N_{j_m}$ the ridge directions $\tau_{j_s}$, $s=1,\ldots, m$, are transverse to all possible intersections of the other ridge directions $\tau_{j_r}$, $r \neq s$.
 \end{itemize}
 
 See Figure \ref{tectonicfield}.
 
   \end{definition}
 
  \begin{figure}[h]
\includegraphics[scale=0.6]{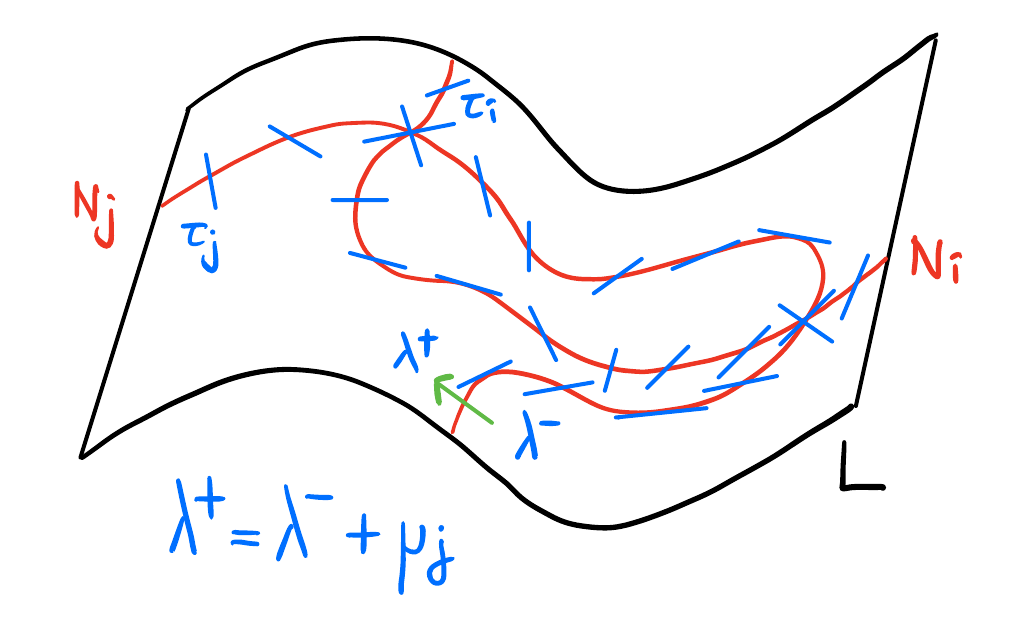}
\caption{A tectonic field. The discontinuity of $\lambda$ along the green arrow is the rank 1 form $\mu_j=\ell_j^2$ corresponding to the hypersurface $N_j$. Note that the hyperplane fields $\tau_j=\ker(\ell_j)$ need not be tangent to the $N_j$.}
\label{tectonicfield}
\end{figure}

\begin{remark}
The closure $\overline{Q} \subset L$ of each plate $Q$ of a tectonic field is a codimension zero submanifold of $L$ with boundary and corners (of any order).
\end{remark}

 \begin{remark} The hyperplane fields $\tau_j =\ker(\ell_j)$ are  co-oriented by the choice of the defining 1-forms $\ell_j$, but note that this co-orientation is not determined by the tectonic field $\lambda$ and co-orientation of $N_j$ as we could replace $\ell_j$ with $-\ell_j$.
 \end{remark}
  
\begin{remark} Tectonic fields do not  form a module over $C^\infty(L)$, but  they can be multiplied by functions which are positive on $\bigcup_jN_j$ and can be added  when the union of the corresponding collections of faults and ridge directions satisfies the transversality conditions. For example this is vacuously satisfied when one of the tectonic fields is actually a smooth graphical Lagrangian field.
\end{remark}




\subsection{Formal transversalization}

The main goal of Section \ref{section: formal solution} is to prove the following transversalization result, which is the formal version of our main Theorem \ref{theorem: main theorem}. 

\begin{theorem}\label{theorem: formal transversalization}
For any Lagrangian field $\gamma$ on $L$ there exists a tectonic field $\zeta$ on $L$ such that $\zeta \pitchfork \gamma$, i.e. $\zeta|_{\ol Q} \pitchfork \gamma$ on $\ol Q$ for each plate $Q$ of $\zeta$. 
\end{theorem}


In fact we will prove the following more general extension result with $C^0$-control.

\begin{theorem}\label{theorem: formal transversalization extension} Let $\gamma$ be a Lagrangian field on $L$ and let $\zeta$ be a tectonic field on $L$. For any two disjoint closed subsets $K_1,K_2 \subset L$ there exists a tectonic field $\widehat{\zeta}$ such that the following properties hold.
\begin{itemize}
\item $\widehat{\zeta}$ is $C^0$-close to $\zeta$.
\item $\widehat{\zeta} \pitchfork \gamma$ on $Op(K_1)$.
\item $\widehat{\zeta}= \zeta$ on $Op(K_2)$.
\end{itemize}
\end{theorem}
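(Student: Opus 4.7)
The plan is to reduce Theorem \ref{theorem: formal transversalization extension} to a sequence of local rank-$1$ modifications, each one shrinking the degeneracy locus of $\zeta-\gamma$ by one dimension. First I would cover $K_1\setminus Op(K_2)$ by a finite collection of small open sets $\{U_\alpha\}$ in each of which $\gamma$ is graphical over $L$ with respect to some chosen polarization. Inside $U_\alpha$ the Lagrangian field $\gamma$ then corresponds to a smooth quadratic form $g_\alpha$ on $TL|_{U_\alpha}$, and the transversality condition $\zeta\pitchfork\gamma$ over $U_\alpha$ translates to the requirement that, for every plate $C$ of $\zeta$ and every $x\in \overline{C}\cap U_\alpha$, the quadratic form $\zeta_C(x)-g_\alpha(x)$ is nondegenerate. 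A straightforward induction on the number of cover elements reduces the problem to the case where transversality must be achieved inside a single chart $U_\alpha$, the previously treated charts being absorbed into $K_2$ at each step.

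Within one chart I would argue by a descending induction on the maximum corank $k$ of $\zeta_C(x)-g_\alpha(x)$ taken over all plates $C$ and all points $x\in\overline{C}\cap\overline{U_\alpha}\cap K_1$. Assume $k\geq 1$, pick a point $x_0$ and plate $C_0$ realizing $k$, and pick $v_0\in\ker(\zeta_{C_0}(x_0)-g_\alpha(x_0))$. Choose a co-oriented embedded hypersurface $N\subset U_\alpha$ through a point near $x_0$, disjoint from $Op(K_2)$, transverse to every existing fault and to each of their mutual intersections, together with a non-vanishing $1$-form $\ell$ on $TL|_N$ whose kernel is transverse to every existing ridge direction and satisfies $\ell(v_0)\neq 0$. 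Both $N$ and $\ell$ can be arranged by generic perturbation. Let $\rho\geq 0$ be a bump function supported in $Op(N)\setminus Op(K_2)$ and positive near $x_0$; then for any sufficiently small $\ve>0$ the rule of adding $\ve\rho\,\ell^2$ to the plate quadratic form on the positive side of $N$ produces a new tectonic field $\zeta'$ which is $C^0$-close to $\zeta$, equal to $\zeta$ on $Op(K_2)$, and has the additional fault $N$ with rank-$1$ jump $\ve\rho\,\ell^2$. Since $\ell^2\geq 0$, the corank of $\zeta'-\gamma$ can only decrease on the affected side, and since $\ell(v_0)\neq 0$ it strictly drops at $x_0$.

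Iterating this move by a compactness argument — covering the compact locus where the maximum corank is attained by finitely many small balls and performing the modification inside each — exhausts the top-corank stratum, after which the induction descends to corank $k-1$ and terminates in at most $n$ rounds. The hardest part is managing the transversality bookkeeping across all these rounds: every newly introduced fault and ridge direction must remain mutually transverse to all previously introduced faults, to their iterated intersections, and to their ridge directions. These are finitely many generic open conditions, which can always be satisfied by small perturbations of the chosen $N$ and $\ell$; and since nondegeneracy of $\zeta_C-g_\alpha$ is itself $C^0$-open, transversality once achieved at a point persists under small subsequent perturbations. Finally, scaling every rank-$1$ addition by a sufficiently small $\ve$ delivers the $C^0$-closeness of $\widehat\zeta$ to $\zeta$, and the relative condition $\widehat\zeta=\zeta$ on $Op(K_2)$ holds automatically because every cutoff $\rho$ vanishes there.
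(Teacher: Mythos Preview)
Your reduction to a single chart where $\gamma$ is graphical matches the paper, but the single-chart argument has a genuine gap: the assertion ``since $\ell^2\geq 0$, the corank of $\zeta'-\gamma$ can only decrease on the affected side'' is false. Adding a positive semidefinite rank-$1$ form to an \emph{indefinite} quadratic form can raise the corank. Concretely, in dimension $2$ take $Q_{x_0}=0$ (corank $2$), choose $\ell=e_1^*$, and suppose that for $y$ near $x_0$ one has $Q_y=\operatorname{diag}(a(y),b(y))$ with $a,b$ vanishing transversely at $x_0$. Then $Q_y+\ve\rho(y)\ell^2=\operatorname{diag}(a(y)+\ve\rho(y),\,b(y))$, and along the curve $\{b=0\}$ there is, for every small $\ve>0$, a point $y$ with $a(y)=-\ve\rho(y)$, hence corank $2$ again. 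Your modification kills the corank-$k$ point $x_0$ but manufactures a new one nearby, so the compactness argument does not terminate. There is also a second issue you do not address: introducing the fault $N$ creates a new plate on the \emph{negative} side whose quadratic form is the old $\zeta_C$, so any corank-$k$ point of $\Sigma_k$ lying on the closure of that side persists verbatim.

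The paper circumvents both problems by never confronting corank $>1$. Instead of chipping away at $\zeta-\gamma$, it fixes an auxiliary smooth field $\sigma$ transverse to $\zeta$, writes $\gamma-\sigma=\sum_{j=1}^{N}\alpha_j\ell_j^2$ as a sum of $N=n(n+1)/2$ rank-$1$ forms, and then inductively replaces $\sigma$ by $\sigma+\alpha_1\ell_1^2+\cdots+\alpha_j\ell_j^2$ while adjusting $\zeta$ at each step (Lemma~\ref{inductive lemma}). Since one starts from a nondegenerate form and perturbs by a single rank-$1$ form, the degenerate locus at every step is a genuine hypersurface $\Sigma=\{\det=0\}$; the new fault is introduced \emph{along $\Sigma$ itself} with a jump $\pm\ve\ell^2$ whose sign is chosen so that $\det$ is pushed away from zero on \emph{both} sides simultaneously. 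This two-sided control, together with the a priori bound $\operatorname{corank}\leq 1$, is exactly what your one-sided, top-corank-first scheme is missing.
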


\begin{remark} The $C^0$-closeness statement means the following. Given a fixed but arbitrary Riemannian metric on $L$, for any $\ve>0$ we can demand that the maximal angle between $\zeta$ and $\wh \zeta$ is smaller than $\ve$. Moreover, if $L$ is not compact, then the same holds for any function $\ve:L \to (0,1]$.  \end{remark}

Note that Theorem \ref{theorem: formal transversalization extension}  implies Theorem
\ref{theorem: formal transversalization} in its stronger relative form: {\em if $\zeta \pitchfork \gamma$ on $Op(A)$ for $A \subset L$ a closed subset, then we can demand that $\widehat{\zeta} = \zeta $ on $Op(A)$. } To see this take $K_2=A$ and $K_1=L \setminus Op(A)$ and use the $C^0$-closeness provided by Theorem \ref{theorem: formal transversalization extension}.

  In Section \ref{section: adapted version} we will 
prove a version of  Theorem \ref{theorem: formal transversalization extension} for the case where $L$ is a manifold with boundary and corners and $\gamma$ is adapted to the corner structure.





 \subsection{Inductive step} The key ingredient in the proof of the formal transversalization theorem is the following inductive procedure, in which we only deal with a rank 1 form at a time. 
 
 \begin{lemma}\label{inductive lemma}
 Let $\lambda, \eta$ be smooth fields of quadratic forms on $L$, with $\eta=\alpha \ell^2$ for a field of non-zero linear forms $\ell$ and a real valued function $\alpha:L \to \bR$. Let $\zeta$ be a tectonic field which is transverse to $\lambda$. Then there exists a $C^0$-small tectonic field $\zeta'$ such that $\zeta+\zeta'$ is a tectonic field transverse to $ \lambda + \eta$. If $\eta=0$ on $Op(A)$ for some closed subset $A \subset L$, we may moreover demand that $\zeta'=0$ on $Op(A)$.
 \end{lemma}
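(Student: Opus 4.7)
The strategy is to reduce transversality to a pointwise scalar inequality via the matrix-determinant lemma, then to build $\zeta'$ as a small scalar multiple of $\ell^2$ supported on a thin ``shell'' around the degeneracy locus, with the two boundary hypersurfaces of that shell supplying the new faults.

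\emph{Step 1 (pointwise criterion).} Set $Q := \zeta - \lambda$; by hypothesis $Q|_C$ is smooth and nondegenerate on each plate $C$ of $\zeta$. Put $\beta_C := Q|_C^{-1}(\ell,\ell)$, a smooth function on $\overline{C}$. Writing $\zeta' = f\,\ell^2$ for a scalar function $f$, the matrix-determinant lemma gives
\[
\det\bigl(Q|_C + (f-\alpha)\ell^2\bigr) \;=\; \det(Q|_C)\,\bigl(1 + (f-\alpha)\beta_C\bigr),
\]
so $(\zeta+\zeta')-(\lambda+\eta)$ is nondegenerate on $C$ iff $f \neq s_*|_C$, where $s_*|_C := \alpha - 1/\beta_C$ (and the condition is vacuous where $\beta_C = 0$, since then $Q + c\ell^2$ is nondegenerate for all $c$).

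\emph{Step 2 (construction of $\zeta'$).} Fix scales $0 < \epsilon'' < \epsilon' < \epsilon$. After a small generic perturbation of the auxiliary data we produce a piecewise smooth function $\phi:L\to\bR$, smooth on each plate of $\zeta$, such that $|\phi - s_*|_C| < \epsilon''$ on each $\overline{C}$ and such that the level hypersurfaces $\{\phi = \pm\epsilon'\}$ are smooth, transverse to every stratum of the existing fault structure of $\zeta$, and disjoint from $\{\ell = 0\}$. Set
\[
\zeta' \;:=\; \epsilon \cdot \chi \cdot \ell^2, \qquad \chi := \mathbf{1}_{\{|\phi| < \epsilon'\}}.
\]
On the outer region $\{|\phi| > \epsilon'\}$ one has $f = 0$ and $|s_*|_C| > \epsilon' - \epsilon'' > 0$, so $f \neq s_*|_C$; on the inner region $\{|\phi| < \epsilon'\}$ one has $f = \epsilon$ and $|s_*|_C| < \epsilon' + \epsilon'' < \epsilon$, so again $f \neq s_*|_C$. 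Thus $\zeta + \zeta'$ is transverse to $\lambda + \eta$ on every plate, and $\|\zeta'\|_{C^0} = O(\epsilon)$ is $C^0$-small.

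\emph{Step 3 (tectonic structure of $\zeta + \zeta'$).} The two new faults $\{\phi = \pm\epsilon'\}$ each carry the positive rank-one jump $\epsilon\,\ell^2$; by the generic position assumption these are tectonically transverse to the faults $N_j$ of $\zeta$ and to each other. Along each $N_j$, the existing jump $\ell_j^2$ of $\zeta$ is supplemented by an extra jump $\epsilon(\chi_{C_+} - \chi_{C_-})\ell^2$ coming from $\zeta'$: when $\chi$ is continuous across $N_j$ nothing is added, and when $\ell \parallel \ell_j$ the resulting total jump remains rank-one positive for $\epsilon$ small enough.

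\emph{Main obstacle.} The delicate point is ensuring that the cut-off $\chi$ extends across each existing fault $N_j$ in a tectonically admissible way. Because the two branches $s_*|_{C_+}$ and $s_*|_{C_-}$ of $s_*$ at a point of $N_j$ can differ arbitrarily, the ``inner'' region as seen from $C_+$ and from $C_-$ need not agree, so $\phi$ may fail to be globally smooth and $\chi$ may have to jump across $N_j$ in a direction transverse to $\ell_j$. This is resolved by performing the construction inductively on the depth of the stratification of $\zeta$: on each stratum one selects the branch of $s_*$ to approximate so that the signs of $\phi \pm \epsilon'$ match across adjacent strata, introducing whenever necessary extra transverse faults of $\zeta'$ (in the $\ell_j$-direction near $N_j$) along the codimension-one locus in $N_j$ where the two branches straddle the threshold. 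Compactness of $L$ ensures the procedure terminates, yielding the required $C^0$-small tectonic field $\zeta'$.
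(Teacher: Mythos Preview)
Your reduction via the matrix-determinant lemma to the scalar criterion $f \neq s_*|_C$ is clean, and the choice of $\zeta'$ as a multiple of $\ell^2$ supported near the degeneracy locus is exactly the right shape---this is also what the paper does. But there is a genuine gap in Step~3, precisely at the ``main obstacle'' you flag, and your proposed resolution does not close it.

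The difficulty is that $s_*|_C = \alpha - 1/\beta_C$ depends on the plate through $\beta_C = (\zeta-\lambda)|_C^{-1}(\ell,\ell)$, which jumps across each existing fault $N_j$. Hence your $\phi$ is only piecewise smooth, the level sets $\{\phi = \pm\epsilon'\}$ do not glue to global hypersurfaces of $L$, and at points of $N_j$ where $\chi|_{C_+} \neq \chi|_{C_-}$ the total jump of $\zeta+\zeta'$ across $N_j$ is $\ell_j^2 \pm \epsilon\,\ell^2$. For this to remain a positive rank-one form you would need $\ell \parallel \ell_j$, which fails generically. Introducing ``extra faults in the $\ell_j$-direction'' cannot help: a fault with jump proportional to $\ell_j^2$ does not absorb a discontinuity proportional to $\ell^2$, and there is no freedom to ``select a branch of $s_*$'' since each plate determines exactly one value. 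So as written, $\zeta'$ is not a tectonic field and $\zeta+\zeta'$ is not one either.

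The paper avoids this by abandoning the attempt to achieve transversality on all plates simultaneously. It treats one plate $Q_1$ at a time: the singular hypersurface $\Sigma_1 = \{\det(\lambda+\eta-\zeta)=0\}\cap Q_1$ is first extended \emph{arbitrarily} to a global closed hypersurface $\widehat\Sigma_1 \subset L$, and then $\zeta_1^{\varepsilon_1}$ is taken to be a multiple of $\ell^2$ with a \emph{single} fault along $\widehat\Sigma_1$ (with a sign chosen from the determinant of $(\lambda+\eta-\zeta)|_{\ker\ell}$). Because $\widehat\Sigma_1$ is global, this is automatically a tectonic field on all of $L$ and is continuous across every $N_j$; it achieves transversality on $Q_1$ only. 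The new singular locus on the remaining plates is torn along $\widehat\Sigma_1$, and the paper reconnects it with explicit S-shaped curves so that it is again a transversely cut hypersurface to which the same step can be applied with $\varepsilon_2 \ll \varepsilon_1$. The price of globalizing the fault before choosing the jump is that one must iterate over all plates; the payoff is that the tectonic conditions across the old faults $N_j$ are satisfied for free.
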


 \begin{proof} Denote by $N_1, \ldots , N_k$ the faults,  by $\tau_1, \ldots , \tau_k$ the ridge directions and by $Q_1, \ldots , Q_m$ the plates of the tectonic field $\zeta$. Note that $\eta$ has rank $1$ and hence $\lambda + \eta - \zeta$ has rank $\geq n-1$. Let $\Sigma \subset L$ denote the locus where the rank of $\lambda + \eta - \zeta$ is exactly $n-1$, i.e. $\Sigma= \{ \det(\lambda + \eta - \zeta)=0\}$, where here and below we fix an arbitrary Riemannian metric on $L$ to compute the determinant. Set $\Sigma_j = Q_j \cap \Sigma$. Our first goal is to reduce Lemma \ref{inductive lemma} to the case where the following properties hold.
 \begin{itemize}
 \item[(A)] The closure $\overline{\Sigma}_j$ of $\Sigma_j$ is a properly embedded smooth codimension $1$ submanifold with boundary and corners of $\overline{Q}_j$ such that  $\overline{\Sigma}_j$ is transverse to all intersections of the faults $N_i$, $i \neq j$ and $\overline{\Sigma}_j$ is transverse to all intersections of the other $\overline{\Sigma}_i $ along their boundary.
 \item[(B)] $\tau = \ker(\eta)$ is transverse to all possible intersections of the ridge directions $\tau_{j_1}, \ldots , \tau_{j_m}$ along the intersection of $N_{j_1} \cap \cdots \cap N_{j_m}$ with each
  $\overline{\Sigma}_i$.
 \end{itemize}
 
  \begin{figure}[h]
\includegraphics[scale=0.6]{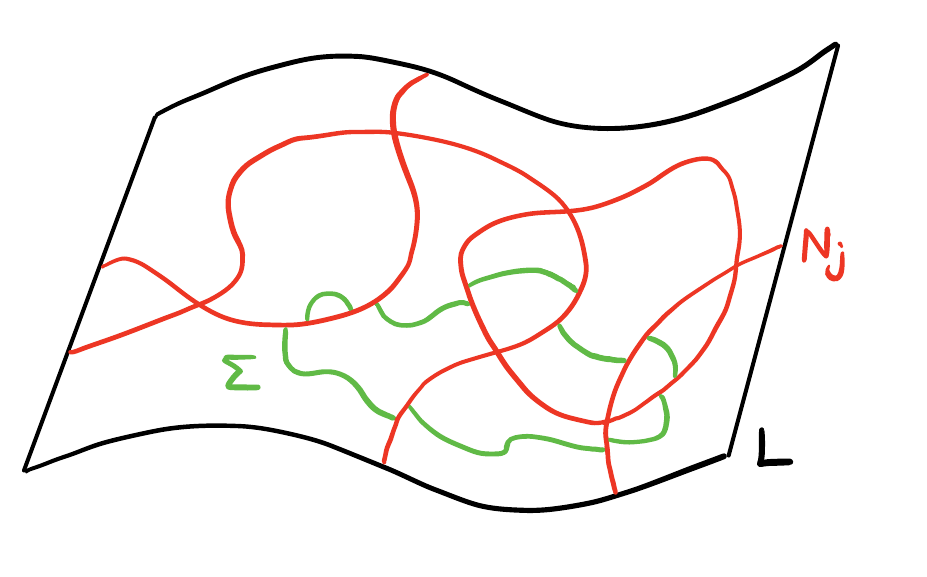}
\caption{The singular locus $\Sigma \subset L$.}
\label{singularlocus}
\end{figure}

 Suppose first that we know Lemma \ref{inductive lemma} to be true when (A) holds. Let $\lambda, \eta$ and $\zeta$ as in the statement of the lemma. By genericity of transversality we can find a $C^0$-small smooth field of quadratic forms $\varphi$ such that the hypersurfaces $\det(\lambda+\varphi+ \eta - \zeta)=0$ are transversely cut out on each plate of $\zeta$, are transverse to all intersections of the faults, and are transverse to each other along their boundaries. Then by assumption we can apply Lemma \ref{inductive lemma} with $\lambda+ \varphi$ instead of $\lambda$ (which is still transverse to $\zeta$ since $\varphi$ is $C^0$-small), obtaining a $C^0$-small tectonic field $\zeta'$ such that $\zeta+ \zeta'$ is transverse to $\lambda + \varphi + \eta$. Hence $\zeta''=\zeta'-\varphi$ is a $C^0$-small tectonic field such that $\zeta+ \zeta''$ is transverse to $\lambda + \eta$. It therefore suffices to prove Lemma \ref{inductive lemma} under the assumption that (A) holds.
 
Next, suppose that we know Lemma \ref{inductive lemma} to be true when (A) and (B) hold. Let $\lambda, \eta$ and $\zeta$ be as in the statement of the lemma and assume that (A) holds. Note that the non-transversality condition in (B) has codimension $ \geq n-m$ and the intersection has codimension $m+1$. Therefore by genericity we can find a smooth field of rank $\leq 1$ forms $\widetilde \eta = \widetilde \alpha \widetilde \ell^2$ which is $C^0$-close to $\eta$ and such that condition (B) holds if we replace $\eta$ by $\widetilde \eta$. We may assume the condition (A) still holds by openness. Then by assumption we can apply Lemma \ref{inductive lemma} with $\widetilde \eta$ instead of $\eta$. The output is a $C^0$-small tectonic field $\zeta'$ such that $\zeta+ \zeta'$ is a tectonic field transverse to $\lambda + \widetilde \eta$. Hence $\zeta''=\zeta'+ \eta - \widetilde \eta$ is a $C^0$-small tectonic field such that $\zeta+ \zeta''$ is a tectonic field transverse to $\lambda + \eta$. It therefore suffices to prove Lemma \ref{inductive lemma} under the assumption that (A) and (B) hold.

We now proceed to prove Lemma \ref{inductive lemma} under the assumption that (A) and (B) hold. 

Extend $\Sigma_j $ to a closed hypersurface $\widehat \Sigma_j \subset L$, so that the collection $N_1 , \ldots , N_k ,\widehat \Sigma_1 , \ldots , \wh \Sigma_k$ forms a transverse system of hypersurfaces, as in condition (A). This is possible because $\Sigma_j$ is defined by the equation $\det(\lambda+ \eta - \zeta)|_{Q_j}=0$, so it suffices to extend the function $\Delta_j= \det(\lambda+ \eta - \zeta)|_{\ol Q_j}$ to $L$. A generic extension provides the desired transversality.
Note that $\Sigma_j$ is canonically co-oriented by the direction in which $\Delta_j$ is increasing and hence we can extend this co-orientation to $\widehat \Sigma_j$ using the extension of $\Delta_j$. 

We will construct the tectonic field $\zeta'$ inductively, working plate by plate.  We begin with the first plate $Q_1$.

Fix a tubular neighborhood $U_1=\widehat \Sigma_1 \times (-1,1)$ of $\widehat \Sigma_1$ with coordinates $(x,u_1)$ so that $\partial_{u_1}$ agrees with the specified co-orientation of $\Sigma_1$. Write $\eta=\alpha \ell^2$ as in the statement of the lemma. Fix a cutoff function $\psi:[0,1] \to [0,1]$ such that $\psi=1$ near $0$ and $\psi=0$ near $1$. Since $\lambda - \zeta$ is nonsingular, the restriction of $\lambda + \eta - \zeta$ to $\tau=\ker(\ell)$ is nonsingular, where we recall $\eta = \alpha \ell^2$ for $\ell$ a non-vanishing 1-form. Let $\delta \in \{ \pm 1\}$ be the sign of the determinant of $(\lambda+\eta-\zeta)|_\tau$ on $Q_1$. Pick $\ve_1>0$ arbitrarily small and consider the tectonic field $\zeta_1^{\ve_1}$ given by
\[ \zeta_1^{\ve_1}=  - \delta \psi_{\ve_1}(u_1) \ell^2, \qquad \psi_{\ve_1}(u_1)= \ve_1\text{sign}(u_1)\psi(|u_1|/\ve_1). \]

\begin{remark} Note that $\zeta_1^{\ve_1}$ is a tectonic field with fault $\wh \Sigma_1$ and $C^0$-norm proportional to $\ve_1$. 
\end{remark}

\begin{claim}\label{claim1}
If $\ve_1$ is chosen small enough, then $\zeta + \zeta^{\ve_1}_1$ is transverse to $\lambda + \eta$ on $Q_1$.
\end{claim}
\begin{proof}[Proof of Claim \ref{claim1}] Fix an arbitrary point in $\Sigma_1$. Choose a local frame $\kappa_1, \ldots , \kappa_{n-1}$ of $\tau^*$ with the corresponding $n(n+1)/2$ quadratic forms $\ell^2, \kappa_j^2$, $(\kappa_j + \ell)^2$, $(\kappa_j + \kappa_i)^2$, $i,j=1, \ldots , n-1$, $i<j$. By considering the symmetric matrix which corresponds to the frame $(\kappa_1, \ldots, \kappa_{n-1}, \ell)$ of $T_x^*L$ we can compute the determinant $\det(\lambda+ \eta -\zeta)$ to be of the form $A(x,u_1)f_1(x,u_1) + B(x,u_1)$, $x \in \Sigma_1$, $u_1 \in (-1,1)$, where $A$ is a non-vanishing function, namely the complementary minor corresponding to the forms $(\kappa_j+\kappa_i)^2$.

That $\Sigma_1$ is cut out transversely means $\partial_{u_1} \det( \lambda + \eta - \zeta)  > 0$ at $u_1=0$, so by taking $\ve_1$ small enough we may assume that this holds for all $u_1 \in (-\ve_1,\ve_1)$ and hence $\det( \lambda + \eta - \zeta)$ is a strictly increasing function of $u_1$ in the tubular neighborhood $\ve U_1:= \wh \Sigma \times (-\ve_1,\ve_1)$ where $\zeta_1^{\ve_1}$ is supported, see Figure \ref{regulargraph}.

 \begin{figure}[h]
\includegraphics[scale=0.5]{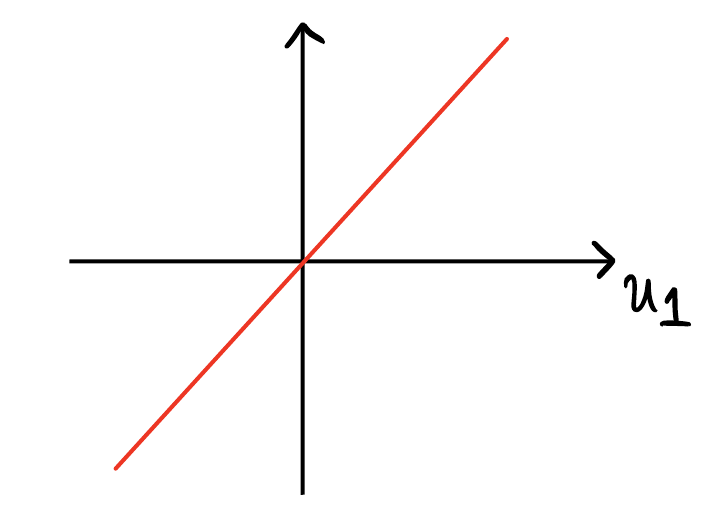}
\caption{We may assume $\partial_{u_1} \det(\lambda + \eta - \zeta)>0$ for $u_1 \in (-1,1)$.}
\label{regulargraph}
\end{figure}

Moreover, with respect to that same frame we can write $\det(\lambda+ \eta -\zeta-\zeta^{\ve_1}_1)$ in the form $A(x,u_1)(f_1(x,u_1) \pm \psi_{\ve_1}(u_1)) + B(x,u_1)$, where $\pm$ is the sign $\delta$ of $A$. Hence we have
\[ \det(\lambda + \eta - \zeta - \zeta_1^{\ve_1}) = \det(\lambda + \eta - \zeta) + |A(x,u_1)| \psi_{\ve_1}(u_1), \]
which is bounded away from zero, see Figures \ref{jump} and \ref{sum}. 
\end{proof}

 \begin{figure}[h]
\includegraphics[scale=0.5]{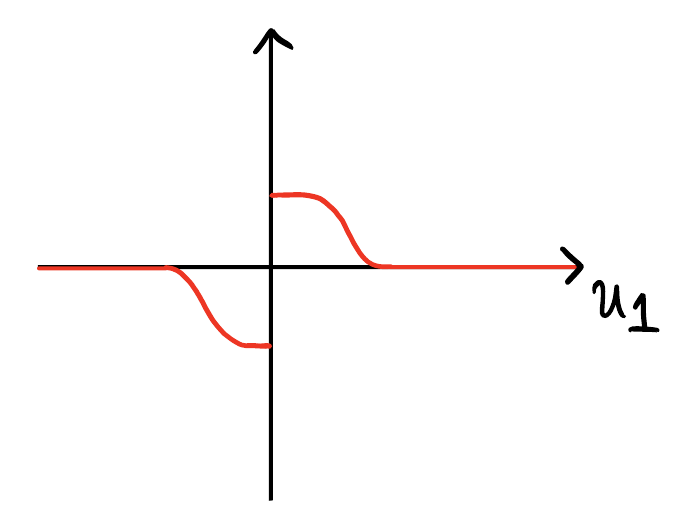}
\caption{The function $|A(x,u_1)|\psi_{\ve_1}(u_1)$, which has a discontinuity at $u_1=0$.}
\label{jump}
\end{figure}

 \begin{figure}[h]
\includegraphics[scale=0.5]{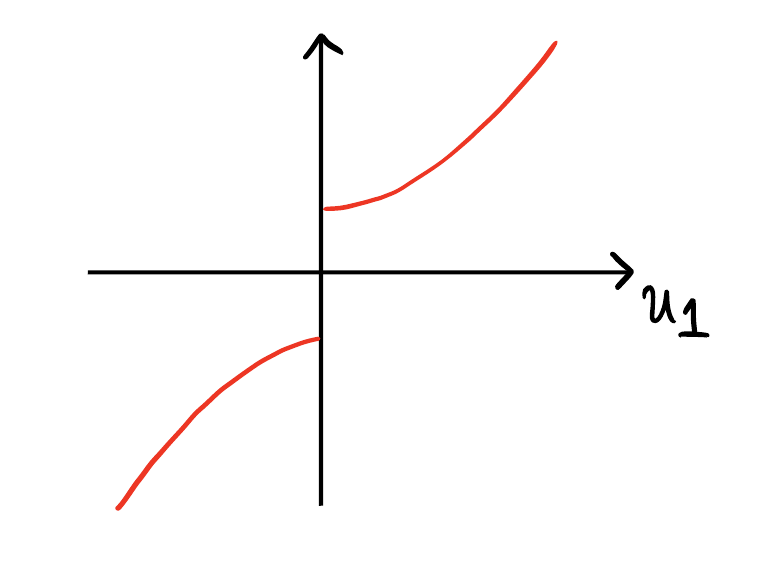}
\caption{The function $\det(\lambda + \eta - \zeta-\zeta^{\ve_1}_1)$, which is the sum of the functions $\det(\lambda + \eta - \zeta)$ and  $|A(x,u_1 )|\psi_{\ve_1}(u_1)$ illustrated in Figures \ref{regulargraph} and \ref{jump}.}
\label{sum}
\end{figure}


Before proceeding with the inductive process on the next plate we examine the new singular locus $\Sigma_j^{\ve_1}=\det(\lambda+ \eta-\zeta-\zeta^{\ve_1}_1)=0$ on $Q_j$ for $j>1$. If $\Sigma_j \cap \widehat \Sigma_1= \varnothing$, then for $\ve_1$ small enough this singular locus is just $\Sigma_j$ and nothing changes. Suppose however that $P_j=\Sigma_j \cap \widehat \Sigma_1 \neq \varnothing$. After the addition of $\zeta^{\ve_1}_1$ to $\zeta$ the hypersurface $\widehat \Sigma_1$ becomes a fault, which causes $\Sigma_j$ to disconnect along $P_j$. The crucial observation is the following.

\begin{claim}\label{claim2} The new singular locus $\Sigma_j^{\ve_1}$ is displaced in opposite directions on each side of the fault $\widehat \Sigma_1$ and hence intersects $\widehat{\Sigma}_1$ in two disjoint parallel copies of $P_j$ in $\widehat \Sigma_1$. \end{claim}

\begin{proof}[Proof of Claim \ref{claim2}]
To verify the claim, choose a tubular neighborhood $U_j= \ol \Sigma_j \times (-1,1)$ of $ \ol \Sigma_j$ in $\ol Q_j$ with coordinates $(x,u_j)$ such that $\partial_{u_j}$ agrees with the specified co-orientation of $\Sigma_j$. We moreover assume compatibility along the boundary, i.e. that $\partial \Sigma_j \times (-1,1) \subset \partial Q_j$.

Together with the coordinate $u_1$ of the tubular neighborhood $U_1$ of $\widehat \Sigma_1$ this gives us coordinates $(x,u_1,u_j)$ of a tubular neighborhood of $P_j$ in $Q_j$. We again assume compatibility with the boundary and corner structure of $\ol Q_j$.

Near $P_j$ we can write $\det(\lambda + \eta - \zeta-\zeta^{\ve_1}_1)$ as before in the form $A(y,u_1,u_j)(f_1(y,u_1,u_j) \pm \psi_{\ve_1}(u_1)) + B(y,u_1,u_j)$, $y \in P_j$, $u_1,u_j \in (-1,1)$, where $A$ is a nonvanishing function and $\pm$ is the sign $\delta$ of $A$. In terms of our previous notation $x=(y,u_j)$. The hypersurface $\Sigma^{\ve_1}_j$ is cut out by the equation 
\[ \det(\lambda + \eta - \zeta - \zeta_1^{\ve_1}) = 0\] 
which is equivalent to
\[  \det(\lambda + \eta - \zeta)  = - |A(y,u_1,u_j)| \psi_{\ve_1}(u_1). \]
That $\Sigma_j$ is cut out transversely means that $\partial_{u_j}\det(\lambda+ \eta - \zeta)>0$ along $\Sigma_j$, so we may assume that this condition holds in the tubular neighborhood. Solving for $u_j$, the implicit function theorem implies that on each side of $\wh \Sigma_1$ the above equation cuts out a smooth hypersurface which is graphical over $\Sigma_j$. Moreover, the intersection of these hypersurfaces with $\wh \Sigma_1=\{u_1=0\}$ is given by the equations
\[ \det(\lambda + \eta - \zeta) =  |A(y,0,u_j)| \ve_1 , \qquad \det(\lambda + \eta - \zeta) = -|A(y,0,u_j)| \ve_1, \]
coming from $u_1<0$ and $u_1>0$ respectively. Since $\det(\lambda+ \eta - \zeta)$ is a strictly increasing function of $u_j$ on $U_1 \cap U_j$ which vanishes at $u_j=0$, these solutions have strictly positive and strictly negative $u_j$ coordinates respectively. Let  $u^+_j(y)>0$ and $u_j^-(y)<0$ be these coordinates, as functions of $y \in P_j$. Then $ \bigcup_y y \times 0 \times [ u^-_j(y), u^+_j(y) ]$ is a tubular neighborhood of $P_j = \bigcup_j y \times 0 \times 0$ in $\wh \Sigma_1 = \{ u_1=0 \}$ with boundary $\Sigma^{\ve_1}_j \cap \wh \Sigma_1$, which was to be proved. \end{proof}

 \begin{figure}[h]
\includegraphics[scale=0.5]{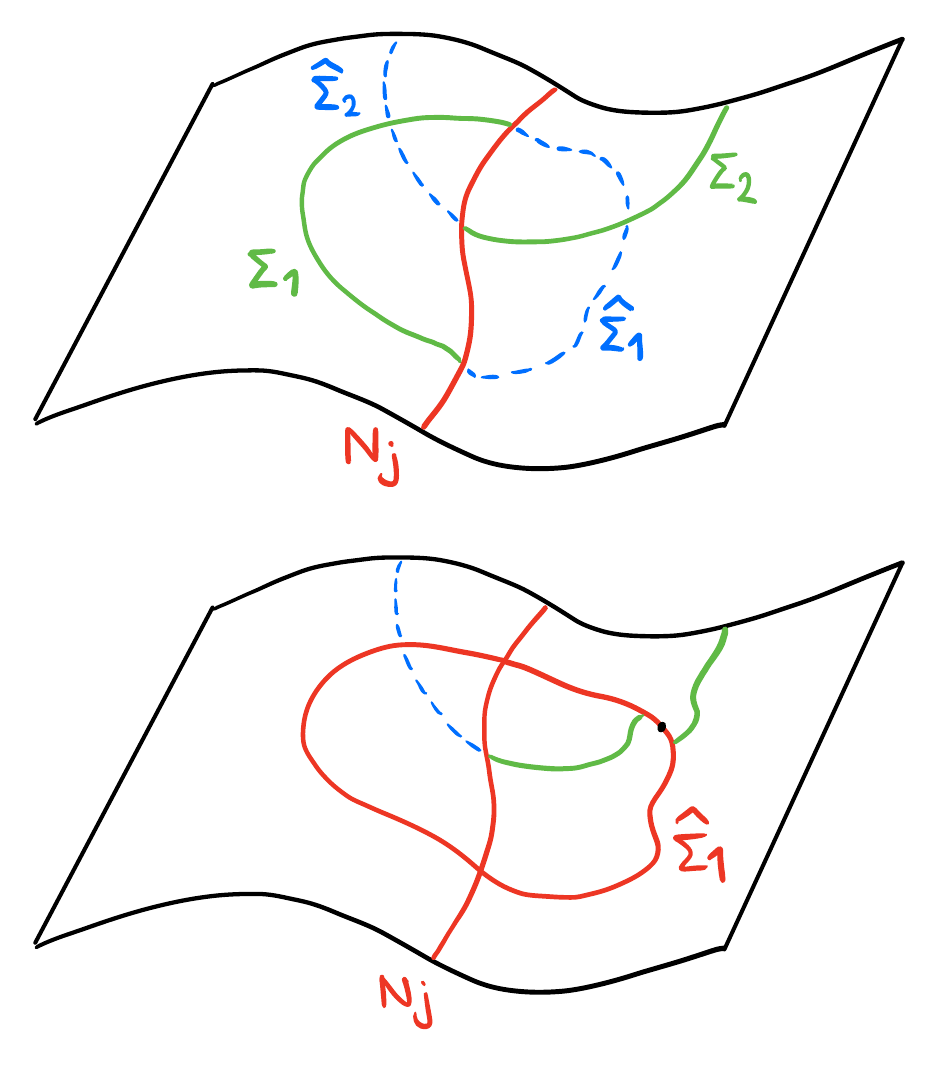}
\caption{The singular locus changes after the first step of the inductive process.}
\label{secondstep}
\end{figure}

We now reconnect $\Sigma^{\ve_1}_j$ back together along $P_j$ in the $(u_1,u_j)$ plane by parametrically closing up the family of broken curves cut out by $\det(\lambda +\eta -\zeta -\zeta_1^{\ve_1})=0$. For each fixed $y \in P_j$ we know that the interval $I_y=0 \times [u_j^{-}(y),u_j^{+}(y)]$ is disjoint from $T_y=\Sigma_j^{\ve_1} \cap (y \times (-1,1)_{u_1} \times (-1,1)_{u_j} ) \subset (-1,1)^2$ except at its endpoints $\{(0,u^{\pm}_j(y))\}=\partial T_y$. Moreover, at these boundary points $T_y$ is transverse to the vertical axis $u_1=0$, see Figure \ref{broken}. 

 \begin{figure}[h]
\includegraphics[scale=0.5]{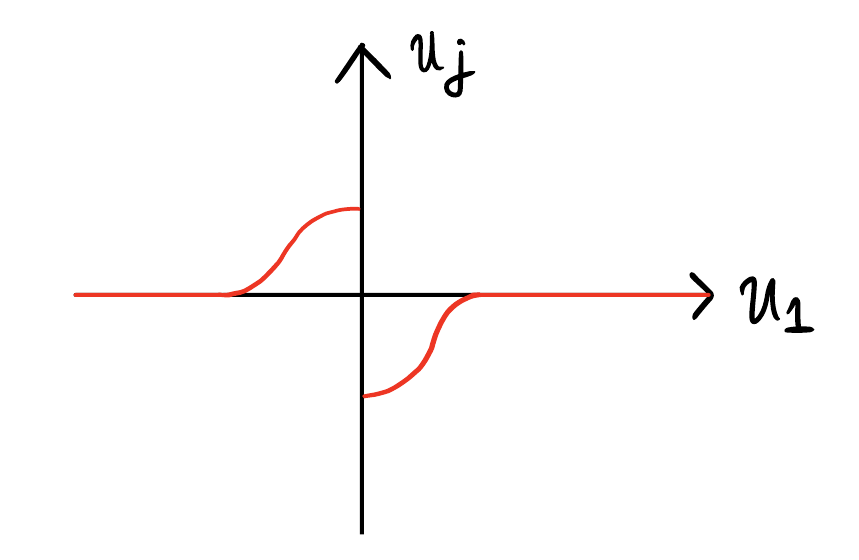}
\caption{The discontinuity of $\Sigma^{\ve_1}_j$ along $\wh \Sigma_1=\{u_1=0\}$.}
\label{broken}
\end{figure}

Consider a parametric family of smoothings $S_y$ of $T_y \cup I_y$, fixed on $T_y$. This exists because the space of smoothings of any fixed $T_y \cup I_y \subset (-1,1)^2$, fixed outside of a compact subset, is contractible. 
We obtain a smooth extension of $\Sigma^{\ve_1}_j$ in $Q_j$ to a smooth hypersurface which for generic $S_y$ satisfies the required transversality conditions with respect to the faults. Since our tubular neighborhoods were chosen compatibly with the boundary structure of $\ol Q_j$ this stitching up extends to the closure, with the transversality conditions along the boundary also achieved by a generic choice of $S_y$.

Abusing notation, we denote the new, extended, hypersurface $\Sigma^{\ve_1}_j \subset Q_j$ by the same symbol. Now, $\Sigma^{\ve_1}_j$ does not agree with $\Sigma_j$ along $\partial Q_j$, hence the old extension $\wh \Sigma_j$ of $\Sigma_j$ to the rest of $L$ must be modified in order to obtain an extension $\wh \Sigma^{\ve_1}_j$ of $\Sigma^{\ve_1}_j$. To construct this modification, one applies once again the contractibility of the space of smoothings of $T_y \cup I_y$ and the genericity of the transversality condition.



 \begin{figure}[h]
\includegraphics[scale=0.6]{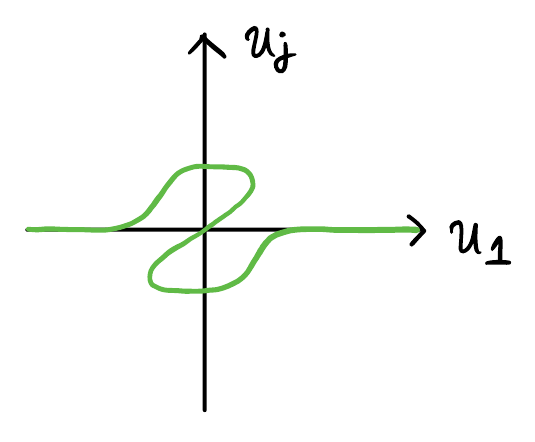}
\caption{There is a homotopically canonical way of smoothing out $T_y \cup I_y$ to $S_y$.}
\label{Scurve}
\end{figure}




We now continue on to plate $Q_2$. 
Choose a tubular neighborhood $U_2 = \wh \Sigma^{\ve_1}_2 \times (-1,1)$. 
Again we have coordinates $(x,u_2)$. Pick $\ve_2 \ll \ve_1$ and define
\[ \zeta^{\ve_2}_2 = \pm \psi_{\ve_2}(u_2)\ell^2, \]
where the sign is determined as above. If $\ve_2$ is chosen small enough, then $\zeta+ \zeta_1^{\ve_1} + \zeta_2^{\ve_2} $ is still transverse to $\lambda + \eta$ on $Q_1$, since transversality is an open condition. Along $\Sigma^{\ve_1}_{2}$ itself we also achieve transversality by the computation carried out in the first step. Hence $\zeta+ \zeta^{\ve_1} + \zeta^{\ve_2}$ is transverse to $\lambda+ \eta$ on $Q_2$. We have thus achieved transversality on $Q_1 \cup Q_2$. 

Observe that on $Q_j$, $j>2$, the new singular locus $\Sigma_j^{\ve_1,\ve_2}=\det(\lambda+ \eta - \zeta - \zeta^{\ve_1}_1 - \zeta^{\ve_2}_2)=0$ will split along the intersection of the old singular locus $\Sigma^{\ve_1}_j=\det(\lambda+ \eta - \zeta - \zeta^{\ve_1}_1)=0$ with $\widehat{\Sigma}^{\ve_1}_2$. We proceed just like before, reconnecting and extending this new singular locus in a homotopically canonical way to a closed hypersurface $\wh \Sigma_j^{\ve_1,\ve_2}$ which contains $\Sigma^{\ve_1,\ve_2}_j$. We can then keep on going with the inductive process until we get to the last stage, which results in a $C^0$-small tectonic field $\zeta=\zeta^{\ve_1}_1+ \cdots + \zeta^{\ve_m}_m$ satisfying the required properties. This completes the proof.  \end{proof}

  \begin{remark}
 The proof of Lemma \ref{inductive lemma} automatically gives the relative form: if $\eta=0$ on $Op(A)$ for $A \subset L$ a closed subset, then we can demand that $\zeta'=0$ on $Op(A)$.
 \end{remark}
 
 \subsection{Extension step} In this section we use the inductive lemma \ref{inductive lemma} to prove the formal transversalization theorem \ref{theorem: formal transversalization extension}. The main point is that any quadratic form is a sum of rank 1 forms. First we prove a local version of the result, which we will then globalize.
 
 \begin{lemma}\label{lemma: local extension} Let $\gamma$ be a smooth field of quadratic forms on the open unit ball $B \subset \bR^n$, let $\xi$ be a tectonic field on $B$ and let $\widetilde  B \subset B$ be a smaller ball whose closure is contained in $B$. There exists a $C^0$-small tectonic field $\zeta$ such that $\xi + \zeta$ is a tectonic field transverse to $\gamma$ on the closure of $\widetilde{B}$ and such that $\zeta=0$ near $\partial B$.
 \end{lemma}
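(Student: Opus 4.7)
The plan is to reduce the lemma to the Inductive Lemma~\ref{inductive lemma} by expressing the target field $\gamma$, up to a reference definite form, as a finite sum of smooth rank~$\leq 1$ quadratic forms, each cut off away from $\partial B$, and then applying the Inductive Lemma once per summand.

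First I would choose a constant $M > 0$ large enough that $\xi + M\,\mathrm{Id}$ is nonsingular on the closure of every plate of $\xi$ contained in $\overline B$; this is possible by compactness and by the smoothness of $\xi$ on each plate. Then $\xi$ is a tectonic field transverse to the smooth Lagrangian field $\lambda_0 := -M\,\mathrm{Id}$, providing the initial transverse pair. Using the constant coframe $dx_1,\dots,dx_n$ on $B \subset \bR^n$ together with the polarization identity $ab = \tfrac{1}{4}((a+b)^2 - (a-b)^2)$, I would write $\gamma + M\,\mathrm{Id} = \sum_{i=1}^{N} \alpha_i\, \ell_i^{\,2}$ as a finite sum of smooth rank~$\leq 1$ forms, with $\alpha_i \in C^\infty(B)$ and $\ell_i$ constant $1$-forms. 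Fix a smooth cutoff $\chi : B \to [0,1]$ with $\chi \equiv 1$ on $\overline{\widetilde B}$ and $\chi \equiv 0$ near $\partial B$, and set $\eta_i := \chi\, \alpha_i\, \ell_i^{\,2}$.

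The main step is an $N$-fold iteration of the Inductive Lemma. Set $\lambda_i := \lambda_{i-1} + \eta_i$, and suppose inductively that a $C^0$-small tectonic field $\zeta^{(i-1)}$ has been produced with $\xi + \zeta^{(i-1)}$ tectonic and transverse to $\lambda_{i-1}$ on all of $B$. Applying the Inductive Lemma to the triple $(\lambda_{i-1}, \eta_i, \xi + \zeta^{(i-1)})$ yields a $C^0$-small tectonic field $\zeta'_i$ such that $\xi + \zeta^{(i-1)} + \zeta'_i$ is tectonic and transverse to $\lambda_i$; by the relative form of the Inductive Lemma, since $\eta_i$ vanishes near $\partial B$, we may require $\zeta'_i = 0$ near $\partial B$ as well. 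After $N$ steps, set $\zeta := \sum_{i=1}^{N} \zeta'_i$. This is a $C^0$-small tectonic field vanishing near $\partial B$, and $\xi + \zeta$ is tectonic and transverse to $\lambda_N = -M\,\mathrm{Id} + \chi\,(\gamma + M\,\mathrm{Id})$, which equals $\gamma$ on $\overline{\widetilde B}$.

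The main obstacle to watch is ensuring that the transversality hypothesis of the Inductive Lemma holds on all of $B$ at each step. Near $\partial B$ nothing has been altered, so transversality of $\xi$ to $-M\,\mathrm{Id}$ persists there; on $\overline{\widetilde B}$, the openness of transversality combined with the freedom to make each $\zeta'_i$ arbitrarily $C^0$-small guarantees that transversalities achieved at earlier stages survive later corrections. Beyond this control of $C^0$-sizes, the argument is essentially bookkeeping; no new geometric input beyond the Inductive Lemma is required.
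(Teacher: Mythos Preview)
Your proof is correct and follows essentially the same approach as the paper: choose a reference form $\sigma$ transverse to $\xi$ (you take $\sigma=-M\,\mathrm{Id}$, the paper takes $\sigma$ ``almost vertical''), decompose $\gamma-\sigma$ into rank~$\leq 1$ summands cut off near $\partial B$, and iterate the Inductive Lemma in its relative form. Your final paragraph's worry about earlier transversalities surviving later corrections is unnecessary here---the output of each application of the Inductive Lemma is exactly the hypothesis of the next---but this does not affect the validity of the argument.
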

 
 \begin{proof} Fix a smooth field of quadratic forms $\sigma$ on $B$ which is transverse to $\xi$. This is always possible, for instance we can take $\sigma$ to be almost vertical. Write the difference $\gamma-\sigma$ as a sum $\alpha_1 \ell_1^2 + \cdots + \alpha_N \ell_N^2$, where the $\ell_j$ are smooth fields of linear forms. For example we can use the linear forms $X_i+X_j$, where $1 \leq i \leq j \leq n$. Then $N=n(n+1)/2$ and the identity $X_iX_j = \frac{1}{2}\big( (X_i+X_j)^2-X_i^2-X_j^2\big)$ ensures that such a decomposition exists. Let $\widetilde \alpha_j$ be a function which is equal to $\alpha_j$ on the closure of $\widetilde B$ and is equal to zero near $\partial B$.
 
 We begin by applying Lemma \ref{inductive lemma} to $\lambda = \sigma$, $\eta=\widetilde \alpha_1 \ell_1^2$ and $\zeta=\xi$. We obtain a $C^0$-small tectonic field $\zeta_1$ such that $\sigma + \widetilde \alpha_1 \ell_1^2 - \xi - \zeta_1$ is nonsingular. Next we apply Lemma \ref{inductive lemma} to $\lambda= \sigma + \widetilde \alpha_1 \ell_1^2$, $\eta=\widetilde \alpha_2 \ell_2^2$, and $\zeta=\xi + \zeta_1$. We obtain a $C^0$-small tectonic field $\zeta_2$ such that $\sigma + \widetilde \alpha_1 \ell_1^2 + \widetilde \alpha_2 \ell_2^2 - \xi - \zeta_1 - \zeta_2$ is nonsingular. We repeat this process inductively. When at the last step we apply Lemma \ref{inductive lemma}, we obtain a $C^0$-small tectonic field $\zeta=\zeta_1 + \cdots + \zeta_N$ such that $\sigma + \sum_{j=1}^N \widetilde \alpha_j \ell_j^2 - \xi -\zeta$ is nonsingular. In particular $\gamma-\xi-\zeta$ is nonsingular on $\widetilde B$. Moreover, since each time we apply Lemma \ref{inductive lemma} we have $\eta=0$ near $\partial B$, we may apply the relative version of the lemma and hence assume that $\zeta_j=0$ near $\partial B$ for each $j=1,\ldots , N$. Therefore $\zeta=0$ near $\partial B$ also. \end{proof}

 \begin{proof}[Proof of Theorem \ref{theorem: formal transversalization extension}]
For $C>0$ we set $\Omega_C=\{ x \in L: \, \, \gamma \pitchfork T_x^*L \, \, \text{and} \, \, |\det(\gamma_x)|<C\} \subset L$. Choose $C$ sufficiently large so that $\zeta \pitchfork \gamma$ outside of $\Omega_C$. Let $B_1 , \ldots , B_m$ be a cover of $\Omega_C \cap K_1$ by open balls $B_j$ such that $\overline{B}_j \subset \Omega_{2C} \setminus K_2$. In particular $\gamma$ is graphical on each $B_j$, hence can be thought of as a field of quadratic forms. Take slightly smaller balls $\widetilde B_j$ whose closure is contained in $B_j$ and such that the collection $\widetilde B_1 , \ldots , \widetilde B_m$ still covers $\Omega_C\cap K_1$. We will construct the desired $\zeta$ inductively, one $B_j$ at a time.
 
 First apply Lemma \ref{lemma: local extension} on $B_1$ to $\gamma$ and $\xi=\zeta$, producing a $C^0$-small tectonic field $\zeta_1$ such that $\zeta_1=0$ near $\partial B_1$ and such that $\zeta+\zeta_1$ is transverse to $\gamma$ on $\widetilde B_1$. Suppose that we have constructed $C^0$-small tectonic fields $\zeta_1 , \ldots , \zeta_k$ supported on $\bigcup_{j=1}^k B_j$ such that $\zeta+\sum_{j=1}^k \zeta_j$ is transverse to $\gamma$ on $\bigcup_{j=1}^k \widetilde B_j$. Apply Lemma \ref{lemma: local extension} on $B_{k+1}$ to $\gamma$ and $\xi = \zeta+ \sum_{j=1}^k \zeta_k$ to obtain a $C^0$-small tectonic field $\zeta_{k+1}$ such that $\zeta_{k+1}=0$ near $\partial B_{k+1}$ and such that $\zeta+\sum_{j=1}^{k+1} \zeta_j$ is transverse to $\gamma$ on $\widetilde B_{k+1}$. Since transversality is an open condition, by taking $\zeta_{k+1}$ to be sufficiently $C^0$-small we can ensure that $\zeta+\sum_{j=1}^{k+1}\zeta_j$ is also transverse to $\gamma$ on $\bigcup_{j=1}^k \widetilde B_j$. Hence $ \zeta+\sum_{j=1}^{k+1} \zeta_j$ is transverse to $\gamma$ on $\bigcup_{j=1}^{k+1} \widetilde B_j$ and the inductive procedure can continue. 
 
 At the last stage of the inductive procedure we obtain a tectonic field $\zeta'=\zeta+\sum_{j=1}^m \zeta_j$ which is $C^0$-close to $\zeta$, such that $\zeta' \pitchfork \gamma$ on $\Omega_C\cap K_1$ and such that $\zeta'=\zeta$ outside of $\Omega_{2C} \setminus K_2$. If $\zeta'$ is sufficiently $C^0$-close to $\zeta$ then $\zeta' \pitchfork \gamma$ also on $\Omega_{2C}\setminus \Omega_C$, because we chose $C>0$ so that $\zeta \pitchfork \gamma$ in that region. Hence $\zeta' \pitchfork \gamma$ on $K_1$ and $\zeta'=\zeta$ on $Op(K_2)$. This completes the proof.  \end{proof}

 \section{Alignment of ridges}\label{section: homotopically integrable solution}
 
\subsection{Aligned transversalization}

Let $L$ be a smooth manifold and let $\Lambda \subset T^*L$ be a ridgy Lagrangian. Denote by $R \subset \Lambda$ the ridge locus and let $\Lambda \setminus R = P_1 \cup \cdots \cup P_k$ be the decomposition into connected components (each of which is a smooth manifold with corners). Suppose that $\Lambda$ is graphical over $L$ and denote by $Q_j$ the image of $P_j$ under the projection $T^*L \to L$. Then $\Lambda$ is given over $Q_j$ as the graph of a closed 1-form $\beta_j$. Assume for simplicity that $\Lambda$ is exact, so that we can write $\beta_j=dh_j$ for $h_j:Q_j \to \bR$ a smooth function. Set $\lambda_j = \text{Hess}(h_j)$ on $Q_j$, where we use an auxiliary Riemannian metric on $L$ to write down the Hessian. Note that the $\lambda_j$ assemble to a tectonic field $\lambda$ with plates $Q_j$. 

\begin{definition}
When a tectonic field $\lambda$ arises in this way we say that it is \emph{integrable}.
\end{definition}

A tectonic field provides the infinitesimal data to integrate a graphical ridgy Lagrangian. However, for the integration to be possible in a neighborhood of the fault locus we need the additional condition that the ridges are aligned with the faults. 

\begin{definition}\label{def: integrable}
We say that a tectonic field $\lambda$ is \emph{aligned} if $\tau_j=TN_j$ for every fault $N_j$ and corresponding ridge direction $\tau_j$, see Figure \ref{aligned}.
\end{definition}

  \begin{figure}[h]
\includegraphics[scale=0.6]{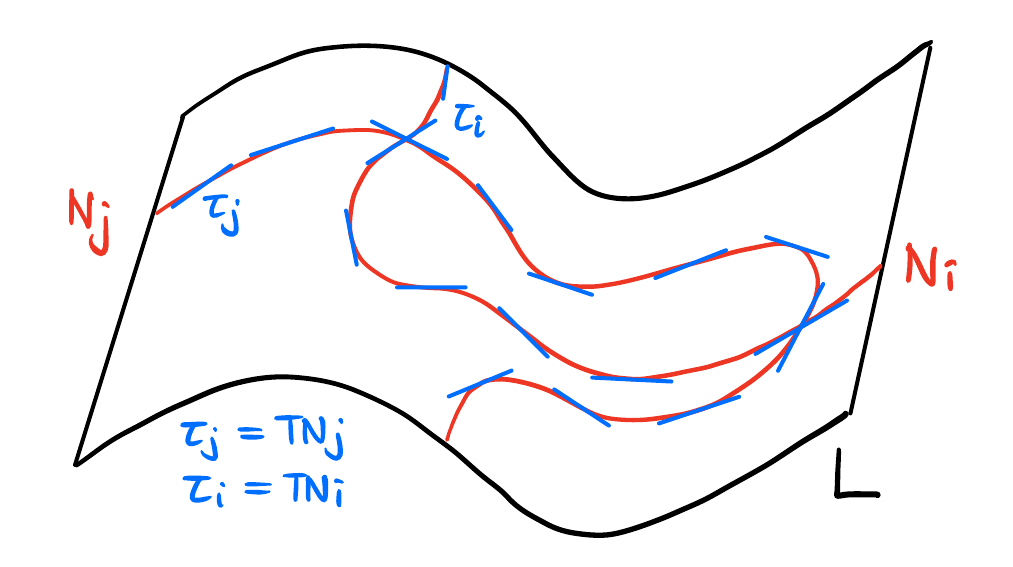}
\caption{An aligned tectonic field.}
\label{aligned}
\end{figure}


For a Lagrangian plane field $\gamma$ in $T^*L$, the problem under consideration is to deform the zero section $L$ by a ridgy isotopy so that it becomes transverse to $\gamma$. In the previous section we found a formal solution to this transversalization problem, i.e. a tectonic field $\lambda$ such that $\lambda \pitchfork \gamma$. In this section we take a step towards integrability by upgrading our formal solution to an aligned solution. More precisely, we have the following aligned version of Theorem \ref{theorem: formal transversalization}.


\begin{theorem}\label{theorem: aligned formal transversalization}
For any Lagrangian field $\gamma$ there exists an aligned tectonic field $\zeta$ such that $\zeta \pitchfork \widehat \gamma$ for $\widehat \gamma$ a Lagrangian field homotopic to $\gamma$.
\end{theorem}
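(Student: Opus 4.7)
The plan is to start from the formal transversalization of Theorem~\ref{theorem: formal transversalization extension} and then rotate each ridge direction $\tau_j = \ker(\ell_j)$ into the tangent distribution $TN_j$, while simultaneously deforming $\gamma$ through a homotopy that preserves transversality. At the end of the homotopy we will obtain an aligned tectonic field $\widehat{\zeta}$ together with a Lagrangian field $\widehat{\gamma}$ homotopic to $\gamma$ such that $\widehat{\zeta} \pitchfork \widehat{\gamma}$.

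First, I apply Theorem~\ref{theorem: formal transversalization extension} to produce a tectonic field $\zeta$ with faults $N_j$ and non-vanishing discontinuity $1$-forms $\ell_j$ such that $\zeta \pitchfork \gamma$. Locally near each $N_j$, I write the tectonic field as a smooth background quadratic form plus a rank-one discontinuity $H_j \ell_j^2$, where $H_j$ is the indicator of the positive co-oriented side of $N_j$. For each $j$ I then choose a smooth homotopy $\ell_j^{(t)}$ of non-vanishing $1$-forms along $N_j$ interpolating from $\ell_j^{(0)} = \ell_j$ to a conormal $\ell_j^{(1)} = \widehat{\ell}_j$ of $N_j$, and extend it to a tubular neighborhood. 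This yields the $1$-parameter family of tectonic fields
$$\zeta^{(t)} \;=\; \zeta \;+\; \sum_j H_j\bigl((\ell_j^{(t)})^2 - \ell_j^2\bigr),$$
terminating at the desired aligned tectonic field $\widehat{\zeta} := \zeta^{(1)}$. To maintain transversality I simultaneously set
$$\gamma^{(t)} \;=\; \gamma \;+\; \sum_j \chi_j\bigl((\ell_j^{(t)})^2 - \ell_j^2\bigr),$$
where $\chi_j$ is a smooth cutoff supported in a thin tubular neighborhood of $N_j$ and equal to $1$ on the positive side adjacent to $N_j$. On the region where $\chi_j = H_j$ the difference $\gamma^{(t)} - \zeta^{(t)} = \gamma - \zeta$ remains non-degenerate by hypothesis, while on the narrow transition strip openness of transversality together with the slim support of $\chi_j$ (and, if necessary, discretizing $t$ into small time-steps) guarantees non-degeneracy throughout the homotopy. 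Setting $\widehat{\gamma} := \gamma^{(1)}$ finishes the construction.

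The main obstacle is the homotopy-theoretic step of interpolating each $\ell_j$ to a conormal through non-vanishing $1$-forms, since the space of non-vanishing sections of $T^*L|_{N_j}$ generally has several path components. To overcome this, I would exploit the flexibility already built into Theorem~\ref{theorem: formal transversalization extension}: by judicious choice of the auxiliary data in the inductive construction from Section~\ref{section: formal solution}, or by a preliminary homotopy of $\gamma$ (which is allowed by the statement), one can arrange for the discontinuity $1$-forms $\ell_j$ produced by the formal solution to lie in the homotopy class of a conormal of $N_j$; if necessary, extra auxiliary faults can be introduced to re-route the rank-one discrepancy. Additional care is needed along the higher-codimension strata of the fault arrangement $\bigcup_j N_j$, where the ridge directions must remain mutually transverse under all partial intersections; this is handled by inducting on the codimension of the stratum and exploiting that the required compatibility conditions are all open.
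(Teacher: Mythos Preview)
Your plan has the right overall architecture---start from the formal solution of Theorem~\ref{theorem: formal transversalization extension} and then rotate the ridge directions into $TN_j$ while deforming $\gamma$ to keep transversality---but the place you label ``the main obstacle'' is exactly where the proof lives, and you do not actually solve it. The obstruction to homotoping $\ell_j$ to a conormal of $N_j$ through non-vanishing $1$-forms is a genuine homotopical invariant: over a $k$-simplex of the fault stratification where $r$ faults meet, one is trying to extend a map $S^{k-1}\to V_{r,n}$ over $D^k$, and $\pi_k(V_{n-k,n})$ is $\bZ$ or $\bZ/2$. Your appeal to ``judicious choice of the auxiliary data'' in Section~\ref{section: formal solution} does not help, since the construction there has no control over the homotopy class of $\ker(\ell_j)$ relative to $TN_j$; and ``extra auxiliary faults can be introduced to re-route the rank-one discrepancy'' is a correct slogan but is the entire content of the argument. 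The paper carries this out via a specific local model (Section~\ref{section: the model}): one grafts on a new spherical fault whose ridge direction is \emph{tangent} to the sphere, and proves (Proposition~\ref{prop:model-change}, using Lemmas~\ref{lm:contr-fix-red} and~\ref{lm:isom-plus-one} on formal ridges and their symplectic reductions) that the modified tectonic field is transverse to some $\wh\gamma$ homotopic to $\gamma$. This changes the Stiefel obstruction by $\pm 1$, and iterating kills it (Proposition~\ref{prop:modifying-hom}). None of this is automatic.

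There is also a secondary technical issue with your transversality-preservation scheme. Your smooth cutoff $\chi_j$ cannot equal the indicator $H_j$ across $N_j$, so on the negative side of $N_j$ one has $\gamma^{(t)}-\zeta^{(t)}=(\gamma-\zeta)+\chi_j\bigl((\ell_j^{(t)})^2-\ell_j^2\bigr)$ with $\chi_j$ close to $1$; the perturbation $(\ell_j^{(t)})^2-\ell_j^2$ is of fixed size (not $C^0$-small), so openness of non-degeneracy and ``discretizing $t$'' do not rescue you. The paper avoids this by a different mechanism: once the field is \emph{homotopically} aligned via $\Psi_t:T_xL\to T_xL$, one lifts $\Psi_t$ to a linear symplectomorphism $\Phi_t$ of $T_x(T^*L)$ and simply pushes both $\wh\zeta$ and $\wh\gamma$ forward by $\Phi_1$; transversality is then preserved tautologically.
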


Note that we gain alignment of the tectonic field $\widehat { \zeta}$ at the cost of deforming the Lagrangian field $\gamma$ to a homotopic field $\widehat{\gamma}$. Nevertheless, in the next section we show that it is possible to integrate the aligned solution $\widehat \zeta$ produced by Theorem \ref{theorem: aligned formal transversalization} to obtain a ridgy Lagrangian which after an ambient Hamiltonian isotopy is transverse to $\gamma$ itself, thus proving our main result Theorem \ref{theorem: main theorem}. 

The rest of the present section is devoted to the proof of Theorem \ref{theorem: aligned formal transversalization}. In fact, we prove below the following more general extension result, which is the aligned analogue of Theorem \ref{theorem: formal transversalization extension}.
 
  \begin{theorem}\label{theorem: aligned formal transversalization extension}
Let $\gamma$ be a Lagrangian field and let $\zeta$ be an aligned tectonic field. For any two disjoint closed subsets $K_1,K_2 \subset L$, there exists an aligned tectonic field $\widehat{\zeta}$ and a Lagrangian field $\wh \gamma$ homotopic to $\gamma$ such that the following properties hold.
\begin{itemize}
\item $\widehat{\zeta}$ is $C^0$-close to $\zeta$.
\item $\widehat{\zeta} \pitchfork  \wh \gamma$ on $Op(K_1)$.
\item $\widehat{\zeta}= \zeta$ on $Op(K_2)$.
\end{itemize}
Moreover, we can assume that the homotopy between $\gamma$ and $\widehat \gamma$ is fixed on $Op(K_2)$.
\end{theorem}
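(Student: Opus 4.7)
The plan is to apply the non-aligned formal transversalization result, Theorem \ref{theorem: formal transversalization extension}, to first produce a (possibly non-aligned) tectonic field solving the transversality problem, then to align its new faults by a localized modification, and finally to compensate the modification by a suitable homotopy of $\gamma$.

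Concretely, first apply Theorem \ref{theorem: formal transversalization extension} to the pair $(\zeta, \gamma)$ and the sets $K_1, K_2$ to obtain a tectonic field $\wt\zeta$ that is $C^0$-close to $\zeta$, satisfies $\wt\zeta \pitchfork \gamma$ on $Op(K_1)$, and equals $\zeta$ on $Op(K_2)$. Since $\zeta$ is aligned and $\wt\zeta = \zeta$ on $Op(K_2)$, any possibly non-aligned new fault $N_j$ of $\wt\zeta$ lies in $L \setminus Op(K_2)$; denote its ridge direction by $\tau_j = \ker \ell_j$.

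Second, I would align each $N_j$ by replacing $\ell_j$ with a nowhere-zero $1$-form $\ell_j^{\mathrm{align}}$ satisfying $\ker\ell_j^{\mathrm{align}} = TN_j$. This is implemented by altering the quadratic form on the positive side $C_+$ of $N_j$: change $\lambda_{C_+}$ to $\lambda_{C_+} + \chi_j\bigl((\ell_j^{\mathrm{align}})^2 - \ell_j^2\bigr)$, where $\chi_j$ is a bump function supported in a thin tubular neighborhood $V_j$ of $N_j$ (chosen disjoint from $Op(K_2)$) and equal to $1$ near $N_j$. A small generic choice of the family $\{\ell_j^{\mathrm{align}}\}$ preserves the multi-fault transversality condition at intersections of the $N_j$. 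The result is the desired aligned tectonic field $\widehat\zeta$, which equals $\zeta$ on $Op(K_2)$ and agrees with $\wt\zeta$ outside $\bigcup_j V_j$; by choosing the $V_j$ small we can also make $\widehat\zeta$ as $C^0$-close to $\zeta$ as desired.

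Finally, to construct $\widehat\gamma$ and its homotopy to $\gamma$, set $\widehat\gamma = \gamma$ outside $\bigcup_j V_j$ (in particular on $Op(K_2)$). Inside each $V_j$, modify $\gamma$ to a Lagrangian field that is transverse to all branches of $\widehat\zeta$ on $V_j \cap Op(K_1)$. At each point of $V_j$ the set of Lagrangian subspaces transverse to a finite collection of $n$-planes is open and dense in the Lagrangian Grassmannian $\Lambda(n)$, so using partitions of unity and the path-connectedness of the fiber $\Lambda(n)$ we can globalize. Interpolating from the modified $\gamma$ back to $\gamma$ in the collar between the inner region of $V_j$ and $\partial V_j$ traces a continuous path of Lagrangian fields, which assembles with the trivial homotopy outside $\bigcup_j V_j$ into the desired homotopy from $\gamma$ to $\widehat\gamma$, automatically fixed on $Op(K_2)$.

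The main obstacle I expect is assembling $\widehat\gamma$ near multi-fault intersections, where up to $2^k$ branches of $\widehat\zeta$ meet and $\widehat\gamma$ must be simultaneously transverse to all of them while interpolating back to $\gamma$ without crossing the discriminant. This should be handled by taking the modifications sufficiently generic and the neighborhoods $V_j$ sufficiently thin, reducing to a standard finite-dimensional transversality argument using the fact that the discriminant has positive codimension in $\Lambda(n)$.
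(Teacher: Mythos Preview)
Your Steps~1 and~2 are fine: applying Theorem~\ref{theorem: formal transversalization extension} and then brute-force replacing each jump form $\ell_j^2$ by an aligned one $(\ell_j^{\mathrm{align}})^2$ does produce a well-defined aligned tectonic field $\widehat\zeta$ that is $C^0$-close to~$\zeta$ and unchanged on $Op(K_2)$.

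The genuine gap is Step~3. You must produce a Lagrangian field $\widehat\gamma$ which is transverse to $\widehat\zeta$ on $Op(K_1)$, agrees with $\gamma$ near $\partial V_j$, and is homotopic to $\gamma$ rel $Op(K_2)$. Your justification (``open and dense in $\Lambda(n)$, partitions of unity, path-connectedness, positive codimension of the discriminant'') does not establish this. The locus of Lagrangian planes failing transversality to a fixed Lagrangian is a Maslov cycle of codimension~$1$ in $\Lambda(n)$; for $2^k$ branches you get a union of $2^k$ such cycles, still of codimension~$1$. A codimension-$1$ condition cannot be avoided by genericity when extending a section over a region of dimension $\geq 1$, and partitions of unity are of no use since $\Lambda(n)$ is not affine. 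More to the point, the obstruction is homotopical rather than a transversality issue: the ridge direction $\tau_j=\ker\ell_j$ and $TN_j$ are two co-oriented hyperplane fields along $N_j$ which need not be homotopic, and this discrepancy is exactly what blocks the construction of $\widehat\gamma$ in the correct homotopy class. Your final paragraph anticipates trouble only at multi-fault intersections, but the problem is already present along a single fault.

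The paper confronts this obstruction head-on. It introduces the intermediate notion of \emph{homotopical alignment} (existence of a homotopy $\Psi_t$ of bundle automorphisms with $\Psi_1(\tau_j)=TN_j$) and shows that the obstruction to it lies in $\pi_k(V_{n-k,n})$ over the $k$-cells of the fault stratification. These obstructions are then killed one unit at a time by inserting new spherical faults via an explicit local model (the paper's Propositions~\ref{prop:model-change} and~\ref{prop:modifying-hom}); crucially, that model comes equipped with a construction of the new~$\widehat\gamma$ homotopic to $\gamma$ (using Lemmas~\ref{lm:contr-fix-red} and~\ref{lm:isom-plus-one} on formal ridges). Once homotopical alignment is achieved, the lift $\Phi_t$ of $\Psi_t$ to $\mathrm{Sp}$ gives $\widehat\gamma=\Phi_1(\gamma)$ with the homotopy $\Phi_t(\gamma)$ for free. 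In particular, the aligned tectonic field the paper produces generally has \emph{more} faults than your $\widehat\zeta$; these extra spherical faults are precisely what absorb the homotopical obstruction that your Step~3 tries to wish away.
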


\subsection{Homotopically aligned transversalization}

It will be useful to also consider the homotopical version of definition \ref{def: integrable}.

\begin{definition} 
We say that a tectonic field is \emph{homotopically aligned} if there exists a homotopy of linear isomorphisms $\Psi_t:T_xL \to T_xL$, $x \in L$, such that $\Psi_0=\text{id}_{T_xL}$ and $\Psi_1(\tau_j)=TN_j$.
\end{definition}

We call $\Psi_t$ the \emph{homotopical alignment} and consider it part of the defining data of a homotopically aligned tectonic field. 
Note that Theorem \ref{theorem: aligned formal transversalization extension} follows immediately from the following analogous homotopically aligned statement.

 \begin{theorem}\label{theorem: homotopically aligned formal transversalization extension}
Let $\gamma$ be a Lagrangian field and let $\zeta$ be a homotopically aligned tectonic field. For any two disjoint closed subsets $K_1,K_2 \subset L$, there exists a homotopically aligned tectonic field $\widehat{\zeta}$ and a Lagrangian field $\wh \gamma$ homotopic to $\gamma$ such that the following properties hold.
\begin{itemize}
\item $\widehat{\zeta}$ is $C^0$-close to $\zeta$.
\item $\widehat{\zeta} \pitchfork  \wh \gamma$ on $Op(K_1)$.
\item $\widehat{\zeta}= \zeta$ on $Op(K_2)$.
\end{itemize}
Moreover, we can assume that the homotopy between $\gamma$ and $\widehat \gamma$  is fixed on $Op(K_2)$ and that the homotopical alignment $\wh \Psi_t$ for $\wh \zeta$ agrees with the homotopical alignment $\Psi_t$ of $\zeta$ on $Op(K_2)$.
\end{theorem}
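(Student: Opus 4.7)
The strategy is to first apply Theorem \ref{theorem: formal transversalization extension} to produce a transverse formal solution, and then construct the homotopical alignment by extension. Concretely, apply Theorem \ref{theorem: formal transversalization extension} to $\zeta$, $\gamma$, and the pair $(K_1,K_2)$, obtaining a tectonic field $\zeta_0$ that is $C^0$-close to $\zeta$, agrees with $\zeta$ on $Op(K_2)$, and is transverse to $\gamma$ on $Op(K_1)$. The new faults $\widehat{\Sigma}_i$ introduced during the construction carry ridge directions $\ker(\ell_i)$ coming from the rank-1 forms $\eta_i=\alpha_i \ell_i^2$ used in the inductive lemma. The remaining task is to upgrade $\zeta_0$ to a homotopically aligned tectonic field $\widehat{\zeta}$, after possibly replacing $\gamma$ by a homotopic $\widehat{\gamma}$.

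The aligning bundle automorphism $\widehat{\Psi}_t$ is built by extending $\Psi_t$ from $Op(K_2)$ to all of $L$. At a point $x\in L$ not on any fault, there is no constraint on $\widehat{\Psi}_1(x)$; on a single fault, the admissible $\widehat{\Psi}_1(x)\in GL^+(T_xL)$ sending one hyperplane to another form a non-empty, path-connected subset; and at transverse intersections of several faults, the simultaneous alignment constraints are in general position and remain jointly satisfiable in $GL^+$. Combining this pointwise analysis with the contractibility of $GL^+(T_xL)$ and a partition-of-unity argument produces a smooth bundle automorphism $\widehat{\Psi}_1$ extending $\Psi_1$, and connectedness of the path space through $GL^+$ then yields a homotopy $\widehat{\Psi}_t$ from the identity to $\widehat{\Psi}_1$ extending $\Psi_t$. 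When an obstruction arises at an intersection of a new and an old fault---the residual linear constraint on $\widehat{\Psi}_1(x)$ may end up in a component of $GL^+$ that is not reachable from $\Psi_1$ on $Op(K_2)$ without crossing a forbidden locus---we resolve it by perturbing $\gamma$ to $\widehat{\gamma}$ through a homotopy $\gamma_s$ supported in the complement of $Op(K_2)$; the perturbation shifts the singular loci $\{\det(\lambda_C+\eta-\zeta)=0\}$, and hence their tangent spaces, until the alignment constraints become jointly satisfiable. Transversality is preserved throughout because the action of bundle automorphisms on Lagrangian fields is by symplectic lift, which preserves the stratification of the Lagrangian Grassmannian.

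The main obstacle is the compatibility of alignment constraints at intersections of the new faults $\widehat{\Sigma}_i$ with the old faults $N_j$ and with each other. Condition (C) of Lemma \ref{inductive lemma} already supplies the pointwise transversality needed for compatibility at a single new fault meeting the old fault system, but the successive applications of the inductive step in the proof of Theorem \ref{theorem: formal transversalization extension} introduce progressively more intersections whose compatibility must be managed; the homotopy of $\gamma$ is precisely the mechanism that absorbs the residual incompatibilities, at the cost of producing a $\widehat{\gamma}$ only homotopic (rather than equal) to $\gamma$ outside $Op(K_2)$. Putting everything together, $\widehat{\zeta}=\zeta_0$ is homotopically aligned via $\widehat{\Psi}_t$, transverse to $\widehat{\gamma}$ on $Op(K_1)$, $C^0$-close to $\zeta$, and equal to $\zeta$ on $Op(K_2)$, with both $\widehat{\Psi}_t$ and the homotopy from $\gamma$ to $\widehat{\gamma}$ fixed on $Op(K_2)$.
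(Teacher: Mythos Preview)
Your first step agrees with the paper: start from the tectonic field $\zeta_0$ produced by Theorem~\ref{theorem: formal transversalization extension}. The divergence, and the gap, is in the second step.

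The obstruction to extending the homotopical alignment $\Psi_t$ over a $k$-simplex of the fault stratification where $r$ faults meet lies in $\pi_{k-1}$ of the Stiefel manifold $V_{r,n}$ (think of the normals to the ridge directions as a frame). These groups are nonzero in the critical case $r=n-k$, and the obstruction class is a discrete invariant of the ridge-direction map along that simplex. It is determined entirely by the rank-1 forms $\mu_j=\ell_j^2$ of $\zeta_0$, not by $\gamma$. Once $\zeta_0$ is fixed, perturbing $\gamma$ to a homotopic $\widehat\gamma$ does not move the faults of $\zeta_0$ nor their ridge directions, so the sentence ``the perturbation shifts the singular loci $\{\det(\lambda_C+\eta-\zeta)=0\}$, and hence their tangent spaces'' does not describe an actual mechanism: those loci no longer depend on $\gamma$ after the construction is finished. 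Even if you re-ran the construction with a nearby $\gamma$, a small perturbation cannot change a discrete homotopy class. Relatedly, the claim that $GL^+(T_xL)$ is contractible is false (it retracts to $SO(n)$), so the partition-of-unity extension argument does not go through as stated.

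The paper's route is different in kind: it does \emph{not} keep $\widehat\zeta=\zeta_0$. Instead it proves a local modification result (Proposition~\ref{prop:model-change}) showing that one can graft onto $\zeta_0$ a new spherical fault whose ridge direction is already tangent to the sphere; this changes the Stiefel obstruction on the adjacent simplex by $\pm 1$ while replacing $\gamma$ by a homotopic $\widehat\gamma$ that remains transverse. Iterating this local model over a triangulation of the fault locus (Proposition~\ref{prop:modifying-hom}) kills all the obstructions. The homotopy of $\gamma$ is needed because the modification alters $[\lambda]$ and forces $\gamma$ to be adjusted to stay transverse, not because it moves the ridge directions. The missing idea in your proposal is precisely this: you must modify the tectonic field itself, adding new faults with prescribed ridge geometry, to change the homotopy class of the ridge-direction map.
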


\begin{proof}[Proof of Theorem \ref{theorem: aligned formal transversalization extension} assuming Theorem \ref{theorem: homotopically aligned formal transversalization extension}]
We apply Theorem \ref{theorem: homotopically aligned formal transversalization extension} in the case where $\Psi_t=\text{id}_{T_xL}$. The output is $\wh \gamma$ and $\wh \zeta$, with homotopical alignment $\wh \Psi_t$. Let $\Phi_t$ be the unique homotopy of linear symplectic isomorphisms of $T_x(T^*L)$, $x \in L$, lifting the linear isomorphism $\wh \Psi_t$ of $T_xL$ and fixing the cotangent fibre $T_x^*L$. Then taking the aligned tectonic field $\Phi_1(\wh \zeta)$ and concatenating the homotopy between $\gamma$ and $\wh \gamma$ with the homotopy $\Phi_t(\wh \gamma)$ we obtain the conclusion of Theorem \ref{theorem: aligned formal transversalization extension}. \end{proof}

Therefore we have reduced the aligned formal transversalization theorem \ref{theorem: aligned formal transversalization extension} to the homotopically aligned formal transversalization theorem \ref{theorem: homotopically aligned formal transversalization extension}. To prove the homotopically aligned formal transversalization theorem \ref{theorem: homotopically aligned formal transversalization extension} we will take the tectonic field $\wh \zeta$ produced by the formal transversalization theorem \ref{theorem: formal transversalization extension}, which may not be homotopically aligned, and perform a local modification to adjust the homotopical condition obstructing alignment.

\subsection{Formal ridges}

 We begin by introducing the notion of a formal ridge.
 
\begin{definition}
A \emph{formal $k$-ridge} over an $n$-dimensional vector space $V$ is the data of a quadratic form $\lambda_0$ on $V$ and an unordered collection of $k$ rank 1 forms $\mu_1 , \ldots , \mu_k$ on $V$ such that each of the hyperplanes $H_j = \ker( \mu_j )$ is transverse to all finite intersections of the other $H_i$, $i \neq j$.
\end{definition}

Let $\lambda$ be a tectonic field on a smooth $n$-dimensional manifold $L$. A point at which exactly $k$ of the faults of $\lambda$ meet determines a formal $k$-ridge. Indeed,  the $2^k$ Lagrangian planes corresponding to the tectonic field $\lambda$ at the point $x$ are given by $\lambda_0 + \sum_{j \in J} \mu_j$, where $J$ ranges over subsets of $\{1,2,\ldots , k\}$ and $\lambda_0$ is the plane corresponding to the quadrant which is initial with respect to the fault co-orientations. We get a formal $k$-ridge by considering $\lambda_0$ together with the $\mu_i$. Note that with this choice of $\lambda_0$ we have  that each $\mu_i$ is the square of a linear form $\ell_j^2$. However, we could also take a different plane in $\lambda$ as our $\lambda_0$ and replace each of the corresponding $\mu_i$ with $-\mu_i$. Then we get another formal $k$-ridge which has the same collection of $2^k$ Lagrangian planes associated to it. Note that there is no canonical ordering on the forms $\mu_i$.

Denote $[\lambda]=\text{span}(\lambda)$, which is a field of coisotropic subspaces of $T(T^*L)|_L$. The dimension of $[\lambda]$ varies and is equal to $n+k$ along the formal $k$-ridge locus. Given a Lagrangian field $\eta$ along $L$ we denote by $\eta^{[\lambda]}$ the symplectic reduction of $\eta \cap [\lambda]$ in $[\lambda]/[\lambda]^{\perp_\omega}$. Note that the transversality of $\eta$ to $\lambda$ is equivalent to transversality of $\eta$ to $[\lambda]$ and transversality of $\eta^{[\lambda]}$ to $ \lambda^{[\lambda]}$. Here $\lambda^{[\lambda]}$ consists of the collection of symplectic reductions of the Lagrangian planes of $\lambda$.
\begin{lemma}\label{lm:contr-fix-red}
 The projection $\eta\mapsto\eta^{[\lambda]}$ defined on the space of Lagrangian fields transverse to $[\lambda]$ has contractible fibers.
  \end{lemma}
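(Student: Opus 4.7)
The plan is to reduce to a pointwise statement in linear symplectic algebra, and then globalize via a parametric/affine bundle argument.

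Pointwise setup. Fix $x\in L$ and drop it from the notation. Set $V=T_x(T^*L)$, $C=[\lambda]_x$ (coisotropic of dimension $n+k$), $C^{\perp}\subset C$ (isotropic of dimension $n-k$), $R=C/C^{\perp}$ (symplectic of dimension $2k$), and let $\mu\subset R$ denote the Lagrangian value at $x$ of the target section. Let $W=\pi^{-1}(\mu)\subset C$ under $\pi:C\to R$; since $\mu$ is Lagrangian in $R$ and $C^{\perp}$ is isotropic in $V$, the subspace $W$ is Lagrangian in $V$ of dimension $n$, and $C^{\perp}\subset W$.

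The key pointwise claim is that the fiber
\[ F=\{\,\eta\subset V\text{ Lagrangian}\mid \eta\pitchfork C,\ \eta^{C}=\mu\,\} \]
is contractible. First I would observe that $\eta\in F$ forces $\eta\cap C^{\perp}=0$ (so the reduction is injective on $\eta\cap C$) and that $\eta_0:=\eta\cap C$ is a linear complement of $C^{\perp}$ inside $W$. Define $p:F\to E$ by $p(\eta)=\eta\cap C$, where $E$ is the set of complements of $C^{\perp}$ in $W$. Then $E$ is an affine space over $\mathrm{Hom}(\mu,C^{\perp})$, hence contractible. Next I would compute the fiber $p^{-1}(\eta_0)$: performing symplectic reduction along the isotropic subspace $\eta_0$, Lagrangians $\eta\supset\eta_0$ correspond to Lagrangians $\bar\eta\subset \eta_0^{\perp}/\eta_0$, and one checks that $\bar C:=(C\cap\eta_0^{\perp})/\eta_0$ is a Lagrangian of $\eta_0^{\perp}/\eta_0$. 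The condition $\eta\cap C=\eta_0$ becomes $\bar\eta\pitchfork\bar C$; the space of Lagrangians transverse to a fixed Lagrangian is affine (identified with symmetric forms on any chosen transverse complement), so each fiber of $p$ is affine. Thus $F$ is an affine bundle over an affine space, and therefore contractible.

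Parametric/global step. The construction of $p$, $E$, the reduction $\eta_0^{\perp}/\eta_0$, and the affine structure on $p^{-1}(\eta_0)$ are all canonical in the data $(C,C^{\perp},\mu)$, so they assemble over $L$ into a pointwise-contractible fibration whose sections constitute the fiber of $\eta\mapsto\eta^{[\lambda]}$ over the chosen section $\mu$ of the reduction. To produce a contraction of this space of sections, I would first use a partition of unity to construct a global section $\eta_{\mathrm{ref}}$ of the fiber (the choice is purely affine at each point, so convex combinations work), and then contract any other section $\eta$ to $\eta_{\mathrm{ref}}$ by interpolating inside the two affine structures in turn: first the affine structure on the space of complements of $C^{\perp}$ in $W$, and then, once the intersection $\eta\cap C$ has been brought to that of $\eta_{\mathrm{ref}}$, the affine structure on Lagrangian complements of $\bar C$ inside the symplectic reduction $\eta_{\mathrm{ref},0}^{\perp}/\eta_{\mathrm{ref},0}$.

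The main obstacle I anticipate is the parametric step: the coisotropic bundle $[\lambda]$ has jumps in dimension along the fault stratification of $L$, so neither $[\lambda]$ nor its reduction $[\lambda]/[\lambda]^{\perp_\omega}$ is a genuine vector bundle over $L$. Because of this, one cannot directly invoke a textbook "contractible fibers implies contractible sections" result. The way around it is that the affine structures used in the contraction are canonically defined on each stratum and extend continuously across the faults: the passage from a $k$-ridge stratum to an adjacent $(k-1)$-ridge stratum only enlarges $W$ and $E$ compatibly, so the two-stage interpolation above is continuous globally. Once this continuity is checked, the contraction of the space of sections follows formally.
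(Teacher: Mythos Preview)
Your proposal is correct and follows essentially the same strategy as the paper: factor the projection as $\eta\mapsto\eta\cap[\lambda]\mapsto\eta^{[\lambda]}$ and show that each step has contractible (indeed affine) fibers. Your identification of the base $E$ as an affine space over $\mathrm{Hom}(\mu,C^{\perp})$ is the paper's identification of the fiber of the second map with linear maps $\tau\to[\lambda]^{\perp_\omega}$, and your description of $p^{-1}(\eta_0)$ as Lagrangians in $\eta_0^{\perp}/\eta_0$ transverse to $\bar C$ is the paper's identification of the fiber of the first map with quadratic forms on $\eta/\tau$.

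The one respect in which your treatment differs is care: you verify that $W$ and $\bar C$ are Lagrangian and that the map $p$ is well defined and surjective, and you address the global step of passing from pointwise contractibility to contractibility of spaces of sections. The paper's proof is purely pointwise and leaves this to the reader. Your worry about the dimension of $[\lambda]$ jumping across strata is legitimate in principle, but note that in the paper the lemma is only ever invoked on a single stratum $C_I^J$ at a time (in the proof of Proposition~\ref{prop:model-change}), where the rank of $[\lambda]$ is constant; so the straightforward affine-bundle argument over a fixed stratum already suffices for the applications, and the cross-stratum continuity check you sketch is not strictly needed.
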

  
\begin{proof} Consider the fibre over a formal $k$-ridge point. We factor the projection $\eta \mapsto \eta^{[\lambda]}$ as the map $\eta \mapsto \eta \cap [\lambda]$ and $\eta \cap [\lambda] \mapsto \eta^{[\lambda]}$. The second map is defined on the space of $(n-k)$-dimensional isotropic subspaces of $[\lambda]$. Let $\tau \subset [\lambda]$ be an $(n-k)$-dimensional isotropic subspace. Then the fibre of the second map over the reduction of $\tau$ can be identified with the space of linear maps $\tau \to [\lambda]^{\perp_{\omega}}$, hence is contractible. For the first map, take an $(n-k)$-dimensional isotropic subspace $\tau \subset [\lambda]$ and let $\eta$ be a Lagrangian plane whose intersection with $[\lambda]$ is $\tau$. Then the fibre of the first map over $\tau$ can be identified with the space of quadratic forms on $\eta/\tau$, hence is also contractible.
\end{proof}

For an inductive argument below it will be convenient to consider formal $k$-ridges with a fixed ordering of the rank 1 forms $\mu_j$. We call this an \emph{ordered formal $k$-ridge}.

  \begin{lemma}\label{lm:isom-plus-one}
Let $\lambda^1$ and $\lambda^2$ be two ordered formal $k$-ridges on $V$. There exists a linear symplectic isomorphism $\Phi$ of $V \times V^*$ which sends $\lambda^1$ to $\lambda^2$. Moreover $\Phi$ is determined up to contractible choice by its restriction to $[\lambda']$, where $\lambda'$ is the ordered formal $(k-1)$-ridge obtained from $\lambda^1$ by forgetting $\mu_k$.
  \end{lemma}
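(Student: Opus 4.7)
I would prove the lemma by induction on $k$, treating the existence of $\Phi$ and the contractible-choice assertion simultaneously. The base case $k=0$ is the classical transitivity of $\mathrm{Sp}(V\times V^*)$ on the Lagrangian Grassmannian, with the restriction statement being vacuous.

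For the inductive step, I would first apply the inductive hypothesis to the ordered $(k-1)$-ridges $\lambda'^1,\lambda'^2$ obtained from $\lambda^1,\lambda^2$ by forgetting $\mu_k^i$, producing a symplectomorphism $\Phi_0$ with $(\Phi_0)_*\lambda'^1=\lambda'^2$. After replacing $\lambda^1$ by $(\Phi_0)_*\lambda^1$, we reduce to the case $\lambda'^1=\lambda'^2=:\lambda'$, and the remaining task is to find a correction $\Psi$ that preserves $\lambda'$ as an ordered ridge and sends $(\Phi_0)_*\mu_k^1$ to $\mu_k^2$. Normalizing coordinates so that $\lambda_0=V$ and $\mu_j=\epsilon_j\,(dq_j)^2$ for $j<k$, the stabilizer of the ordered $(k-1)$-ridge $\lambda'$ in $\mathrm{Sp}(V\times V^*)$ becomes a concretely computable block-lower-triangular subgroup. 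A direct linear-algebra calculation then shows that this stabilizer acts transitively on rank-$1$ forms $\mu_k$ satisfying the transversality hypothesis: the key point is that the transversality $H_k\pitchfork\bigcap_{j<k}H_j$ forces $\ell_k$ to have nonzero component in the directions where the stabilizer acts through an unconstrained $GL$ factor. Sign discrepancies between $\epsilon_k^1$ and $\epsilon_k^2$ are absorbed by the contractible freedom in the inductively produced $\Phi_0$ (equivalently, by the standard $2$-dimensional symplectic swap $(q_j,p_j)\mapsto(p_j,-q_j)$ in one coordinate pair followed by a compensating shear).

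For the contractible-choice assertion, let $G\subset\mathrm{Sp}(V\times V^*)$ denote the subgroup consisting of elements that restrict to the identity on $[\lambda'^2]$ and preserve $\lambda^2$ as an ordered ridge. The fiber of the restriction map $\Phi\mapsto\Phi|_{[\lambda']}$ over a fixed $\phi$ is a torsor over $G$, so it suffices to show $G$ is contractible. Since $[\lambda'^2]\subset V\times V^*$ is coisotropic of codimension $n-k+1$, the subgroup of $\mathrm{Sp}$ that restricts to the identity on $[\lambda'^2]$ is an affine unipotent subgroup parametrized by symmetric ``shears'' into the transverse symplectic direction, hence contractible. Imposing the additional constraint of preserving $\lambda^2$ as an ordered ridge, in particular the $2^{k-1}$ Lagrangian planes indexed by $J\ni k$, cuts this subgroup down to an affine subspace of shears, which remains contractible. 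The main obstacle I foresee is organizing the coordinate description and bookkeeping cleanly so that the transversality conditions in the definition of an ordered formal $k$-ridge translate directly into non-emptiness and contractibility of the relevant affine spaces; once the coordinates are in place, both claims reduce to explicit linear algebra.
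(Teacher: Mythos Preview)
Your proposal is correct and follows essentially the same approach as the paper: induct on $k$, use the inductive hypothesis to match the underlying ordered $(k-1)$-ridges, then correct $\mu_k$ by an element of the stabilizer, and for uniqueness identify the relevant fiber with an affine space of symmetric ``shear'' matrices. The paper carries this out by an explicit block-matrix computation rather than in the stabilizer language you use, but the content is identical; in fact your formulation of the contractibility step (fix the restriction to $[\lambda']$ \emph{and} preserve $\lambda^2$ as an ordered ridge) is slightly more precise than the paper's, which at that point writes $[\lambda^1]$ where $[\lambda']$ is meant.
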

  
  \begin{remark}
  That $\Phi$ sends $\lambda^1$ to $\lambda^2$ means that the image of the Lagrangian plane $\lambda^1_0 + \sum_{j \in J} \mu^1_j $ by $\Phi$ is $\lambda^2_0 + \sum_{j \in J} \mu^2_j $ for every $J \subset \{1 , \ldots , k \}$.
  \end{remark}
  

  
\begin{proof} We argue by induction on $k=0,1,\ldots , n$. For $k=0$ the existence part follows from the fact that the symplectic group acts transitively on the Lagrangian Grasmannian. The uniqueness follows from the fact that a linear symplectic isomorphism is determined by its restriction to a pair of transverse Lagrangian planes, together with the fact that the space of Lagrangian planes transverse to a fixed Lagrangian plane is contractible. We spell out the details of an explicit argument which will be easily adaptable to the case $k>0$. First we reduce to the case $V=\bR^n$, $\lambda^1_0=\lambda^2_0=\bR^n\subset \bC^n$ and $\Phi|_{\bR^n}=\text{id}_{\bR^n}$. Write the symplectic matrix $M \in \text{Sp}(2n)$ representing $\Phi$ in the block form corresponding to $\bC^n=\bR^n \times i \, \bR^n$
\[ M =\mat{A & B\\
C&D }. \]
Then $\Phi|_{\bR^n}=\text{id}_{\bR^n}$ is equivalent to $A=I_n$ and $C=0$. That $M$ is symplectic means $M^T \Omega M = \Omega$ for
\[ \Omega =\mat{0 & I_n\\
-I_n& 0 },\]
where $I_n$ is the $n$ by $n$ identity matrix. It follows that $D=I_n$ and $B^T=B$. Hence $\Phi$ is uniquely determined up to the contractible choice of the symmetric matrix $B$. This completes the base case.

For the inductive step, observe that as before it suffices to consider the case $\lambda^1_0=\bR^n \subset \bC^n$. Furthermore, up a linear change of coordinates in $\bR^n$ we may assume that the kernel of both $\mu^1_j$ and $\mu^2_j$ is the coordinate hyperplane $\{q_j=0\} \subset \bR^n$. Let $\Phi$ be the linear symplectic isomorphism obtained by applying the inductive hypothesis to the 
ordered formal $(k-1)$-ridges corresponding to $\lambda^1$ and $\lambda^2$ after forgetting $\mu_k^1$ and $\mu_k^2$ respectively. Then by pulling $\lambda^2$ back by $\Phi$ we reduce to the case $\lambda^1_0=\lambda^2_0$ and $\mu_i^1=\mu_i^2$ for $i<k$.


In this case have $[\lambda']=\{ p_j = 0 , \, j > k-1 \}$ and $[\lambda^1]=[\lambda^2]=\{ p_j = 0 , \, j > k \}$. 
Note that the product of a horizontal shear of the symplectic subspace $(q_k,p_k)$ and the identity on the complementary $\bR^{2n-2}$ fixes $[\lambda' ]$. Since the group of horizontal shears $(x,y) \mapsto (x, y + ax)$, $a \in \bR$, acts transitively on the space of lines in $\bR^2$ transverse to the horizontal axis $\{y = 0 \} \subset \bR^2$, we can find a linear symplectic isomorphism which is the identity on $[\lambda']$ and takes $\lambda^1$ to $\lambda^2$. This proves the existence part.

For the uniqueness part it suffices to show that a linear symplectic isomorphism $\Phi$ of $\bR^{2n}$ which restricts to the identity on $[\lambda^1]=\{ p_j=0 ,  \, j \geq k \}$ is unique up to contractible choice. With the same notation as above, write the symmetric matrix $B$ in block form
\[ B = \mat{X & Y \\
Y^T & W }. \]
Here $X$ is a $k$ by $k$ matrix and $W$ is an $(n-k)$ by $(n-k)$ matrix, which are both symmetric. The conditions on $\Phi$ are equivalent to $X=0$ and $Y=0$
Hence $\Phi$ is uniquely determined up to the contractible choice of the symmetric matrix $W$. \end{proof}  




 
  \subsection{The model}\label{section: the model}

Consider a tectonic field $\lambda$ on $L$. Let $N \subset L$ be one of its faults, which bounds a domain $U$ such that outside of $U$ the field $\lambda$ differs by adding a rank one 1 quadratic form $\mu$ along $N$. Let $\Omega \subset U$ be a domain with boundary and corners, where we decompose $\p_1\Omega=F_1\cup F_2$ for $F_1$ and $F_2$ smooth so that $F_1= \p \Omega\cap N$, $\partial F_2 = F_2 \cap N$ and the corner is precisely $\p_2\Omega= F_1\cap F_2$. Let $\nu$ be a field of rank 1 quadratic forms on $\Omega$ such that $\nu=\mu$ near $F_1$.  Consider the field $\wh\lambda$ which is defined to be $\lambda$ outside of $\Omega$ and $\lambda + \nu$ on $\Omega$. After smoothing, $\wh\lambda$ becomes a tectonic field with $\wh N = (N \setminus F_1)\cup F_2$ as one of its faults, see Figure \ref{modification}.

  \begin{figure}[h]
\includegraphics[scale=0.65]{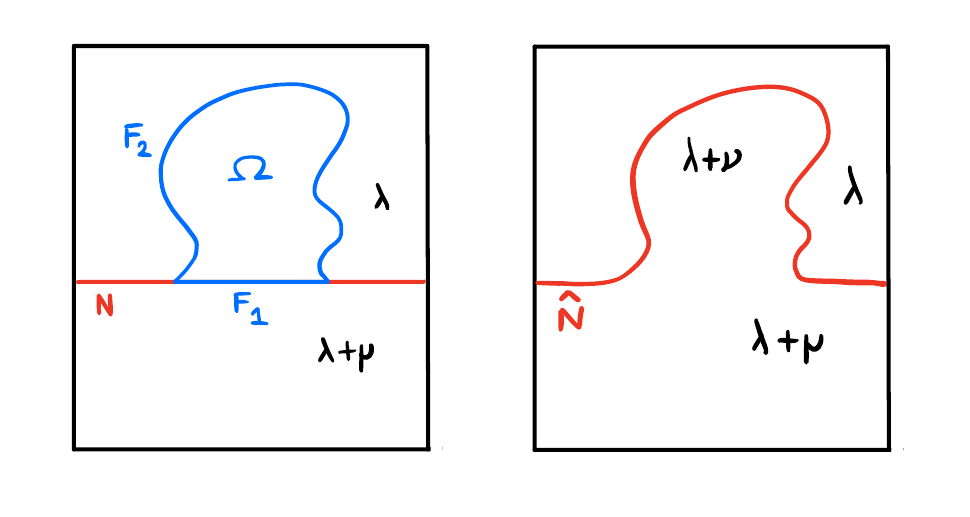}
\caption{The modification $\lambda \mapsto \wh \lambda$. }
\label{modification}
\end{figure}

We now apply this construction in a specific model. Consider a tectonic field  $\lambda$ over $\bR^n\subset T^*\bR^n$  with faults along the first $k$ coordinate hyperplanes $Q^j:=\{q_j=0\}$, $j=1,\dots, k \leq n$. We use the notation $Q^J$ for the fault  intersections: $ Q^J=\bigcap_{j\in J} Q^j$, $J \subset K$ for $K= \{ 1 , \ldots , k \}$. We also enumerate the quadrants on $\bR^n$ by multi-indices $I \subset K$, namely $C_I = \{ q_i \geq 0 , \, i \in I ; \, \, q_j \leq 0, \, j \in K \setminus I \}$. We assume that $\lambda$ is constant in the sense that the discontinuities of $\lambda$ across a fault $Q^j$ are given by constant rank 1 quadratic forms $\mu_j$. So we may write $\lambda= \lambda_0 + \sum_{j \in J} \mu_j $ on $C_J$ for $\lambda_0$ a fixed Lagrangian plane.
 
Take a sphere $\Sigma\subset\bR^n$ of radius  $1$ centered at a point $a$ with coordinates $q_k=2$, $q_j=0,j\neq k$. Denote  $A=\{0\leq q_k\leq 1; q_j=0, j\neq k \}$, and denote by $\Omega$ a neighborhood of $A\cup\Sigma$ in $\{q_k\geq 0\}$. Thus $\p\Omega= (F_1\cup F_2)\cup F_3$, where $F_1= \p\Omega\cap Q^k$, $F_2$ is a $(n-1)$-disk transverse to $Q^k$ and $F_3$ is a $(n-1)$-sphere disjoint from $Q_k$. Let $\nu$ be a field of rank 1 quadratic forms over $\Omega$ which agrees with $\mu_k$ over $F$. We will additionally assume that near $Q^J$ the field  $\nu$ is independent of coordinates $q_j$, $j\in J$. Performing the above construction to $\lambda$ for the specific choices of $\Omega$ and $\nu$ yields a tectonic field which we denote $\wh \lambda$, see Figure \ref{3Dcase}.

  \begin{figure}[h]
\includegraphics[scale=0.6]{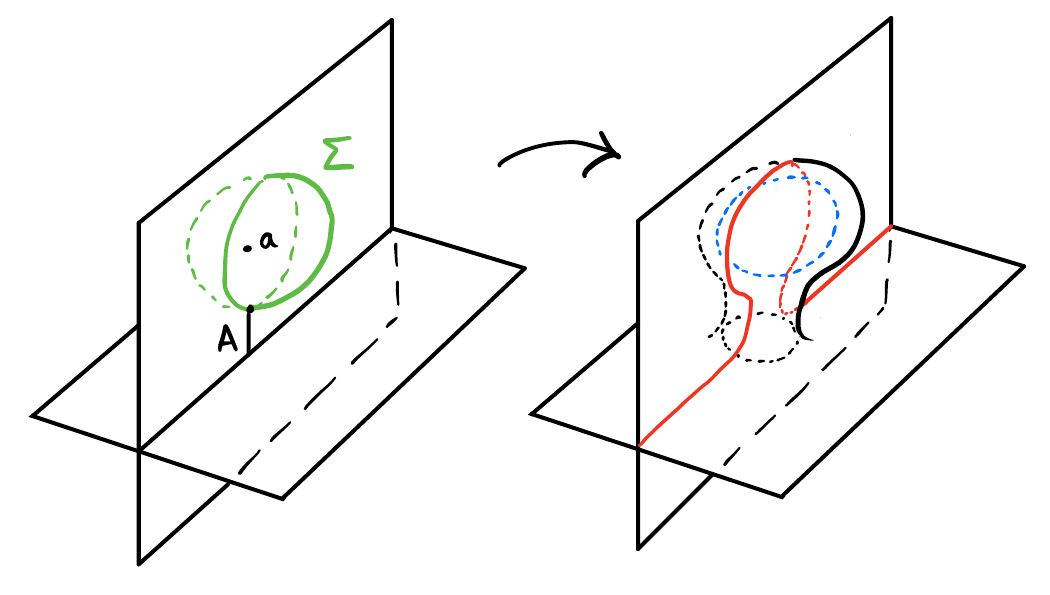}
\caption{The standard model in the case $n=3$, $k=2$. Note that the vertical fault does not change in the process, only the horizontal fault changes.}
\label{3Dcase}
\end{figure}


  \begin{proposition}\label{prop:model-change} Let $\gamma$ be a constant Lagrangian distribution transverse to $\lambda$. Then $\wh \lambda$ is transverse to a distribution $\wh\gamma$ which is homotopic to $\gamma$ by a deformation fixed outside of a compact set.
 \end{proposition}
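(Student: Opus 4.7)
The plan is to construct $\wh\gamma$ together with the compactly supported homotopy from $\gamma$ as the orbit of $\gamma$ under a family of fiberwise linear symplectic automorphisms of $T(T^*\bR^n)|_{\bR^n}$.

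First I would introduce a $1$-parameter interpolation of tectonic fields by setting $\lambda_s := \lambda + s\nu$ on $\Omega$ and $\lambda_s := \lambda$ outside, for $s \in [0,1]$. The hypothesis that $\nu = \mu_k$ near $F_1$ ensures that at $F_1$ the interpolated field carries no jump beyond what $\lambda$ already had across $Q^k$, so that after smoothing corners each $\lambda_s$ is a tectonic field interpolating between $\lambda_0 = \lambda$ and $\lambda_1 = \wh\lambda$. At every point $x$, the collection of rank-$1$ forms appearing as jumps of $\lambda_s(x)$ is a subset of $\{\mu_1,\ldots,\mu_k, s\nu(x)\}$ depending only on which faults pass through $x$; by Lemma~\ref{lm:isom-plus-one}, the formal ridges $\lambda(x)$ and $\lambda_s(x)$ are thus related by a linear symplectic automorphism of $T_x(T^*\bR^n)$, unique up to contractible choice.

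Using this uniqueness together with Lemma~\ref{lm:contr-fix-red} to organize the reduction data across fault intersections, I would construct a family $\Phi_s$ of fiberwise linear symplectic automorphisms, continuous in $(x,s)$, with $\Phi_0 = \mathrm{id}$, $\Phi_s = \mathrm{id}$ outside a compact neighborhood of $\Omega$, and $\Phi_s(\lambda(x)) = \lambda_s(x)$ for all $s, x$. Then $\gamma_s := \Phi_s(\gamma)$ is a continuous family of Lagrangian distributions agreeing with $\gamma$ outside a compact set; since $\gamma \pitchfork \lambda$ and each $\Phi_s$ is symplectic we conclude $\gamma_s \pitchfork \lambda_s$ for all $s$. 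Setting $\wh\gamma := \gamma_1$ yields $\wh\gamma \pitchfork \wh\lambda$, and the family $\gamma_s$ supplies the required deformation.

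The main obstacle is extending $\Phi_s$ continuously across the ridge stratification: at a codimension-$r$ stratum one must assign to each $x$ a linear symplectic automorphism sending a formal $r$-ridge to another of the same combinatorial type, and these assignments must be compatible as one passes to adjacent strata. Here the hypothesis that $\nu$ is independent of $q_j$ for $j \in J$ near $Q^J$ is essential: it guarantees that the induced maps on the coisotropic quotients $[\lambda^J(x)]$ match up along the fault intersections, so that the inductive extension afforded by Lemma~\ref{lm:isom-plus-one}, whose freedom at each stratum is rendered contractible by Lemma~\ref{lm:contr-fix-red}, can be assembled into a single continuous global $\Phi_s$.
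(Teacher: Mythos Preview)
There is a genuine gap in your construction. The family $\Phi_s$ you describe cannot exist as a continuous fiberwise symplectic automorphism, and the obstruction lies precisely at the \emph{new} fault $\partial\Omega\setminus F_1$ (i.e.\ along $F_2$ and $F_3$), not at the old fault intersections you focus on in your last paragraph.

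At a point $x$ on $F_2\cup F_3$ (and away from the old faults $Q^1,\ldots,Q^{k-1}$) the original field $\lambda$ has a formal $0$-ridge at $x$: a single Lagrangian plane. But for any $s>0$ the field $\lambda_s$ has a genuine jump of $s\nu(x)$ across $\partial\Omega$, so $\lambda_s(x)$ is a formal $1$-ridge consisting of two distinct planes. A linear symplectic automorphism sends a single Lagrangian plane to a single Lagrangian plane, so the equation $\Phi_s(\lambda(x))=\lambda_s(x)$ cannot even be posed, let alone solved. Lemma~\ref{lm:isom-plus-one} only compares two formal $k$-ridges with the \emph{same} $k$; it gives nothing here. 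Equivalently, approaching $x$ from inside $\Omega$ forces $\Phi_s$ to carry $\lambda(x)$ to $\lambda(x)+s\nu(x)$, while approaching from outside forces $\Phi_s$ to fix $\lambda(x)$; for $s>0$ these are incompatible and no continuous $\Phi_s$ can do both.

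The paper circumvents this by \emph{not} attempting to build a global $\Phi$. Instead it constructs $\wh\gamma$ directly by induction over the strata $C^J_I$: on each stratum it uses a symplectic isomorphism $\Phi_{y,x}$ (supplied by Lemma~\ref{lm:isom-plus-one}) only to transport the value of $\wh\gamma$ from a basepoint along that stratum, and then it extends $\wh\gamma$ off the stratum using Lemma~\ref{lm:contr-fix-red}, keeping the reduction $\wh\gamma^{[\lambda']}$ fixed. The point is that the reduction data is continuous across the new fault even though the full tectonic type jumps; this is exactly what lets one glue a continuous $\wh\gamma$ transverse to all $2^r$ planes. Your scheme could be repaired by abandoning the global $\Phi_s$ and instead building $\wh\gamma$ stratum by stratum in this way.
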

 \begin{proof}
Write $C^J_I = C_I \cap Q^J$. Note that on $C_I^J$ the tectonic field $\lambda$ is a fixed formal $r$-ridge $\lambda^J_I$, where $r\leq k$ is the cardinality of $J$. We construct $\wh \gamma$ inductively over the dimension $n-|J|$   of  the strata $C^J_I$
intersecting $\Sigma$. The smallest dimensional stratum is $C=C_{1, \ldots , k}^{1,\ldots , k-1}$. For every point $x\in C \cap\Omega$ consider a linear symplectic isomorphism $\Phi_{ 0,x}$ of $\bR^{2n}$ which sends $\wh\lambda_0=\lambda_0$ to $\wh\lambda_x$ and is the identity on $[\lambda']$. Here  $0$ denotes the origin in $\bR^n$, $\lambda_x$ is the formal $k$-ridge of $\lambda$ at $x$ and $\lambda'$ is the formal $(k-1)$-ridge obtained from $\lambda_0$ by forgetting $\mu_k$. According to Lemma \ref{lm:isom-plus-one} there exists a homotopically unique continuous family of such isomorphisms. We can assume that $\Phi_{0,x}$ is the identity if $\wh\lambda_x=\wh\lambda_0$. Let us define $\wh\gamma_x=\Phi_{0,x}(\gamma_0)$, which is transverse to $\wh \lambda$. Using Lemma \ref{lm:contr-fix-red} we can extend $\wh\gamma$ to  $C$ keeping fixed its reduction $\wh\gamma^{[\lambda']}=\gamma^{[\lambda']}$ and making it equal $\gamma$ outside a neighborhood of $\Omega$. Again applying Lemma \ref{lm:contr-fix-red} we conclude that  the constructed field $\wh\gamma$ is homotopic to $\gamma$ via a homotopy $\gamma_t$ with a fixed reduction $\gamma_t^{[\lambda']}$. 


Next, we extend  $\wh\gamma$ to a neighborhood of $C=C_{\{1, \ldots , k\}}^{\{1,\dots, k-1\}}$ so that it is independent of the coordinates $q_j, j\leq k-1$.
For any stratum  $C_I^J$ of codimension $k-2$ adjacent  to 
 $C$  we choose a point $y \in C_I^J\cap\Omega$ in a neighborhood  $U$ of $C$ where $\wh\gamma$ is already defined. We note that  in this neighborhood there exists a family of  linear isomorphisms $\Phi_{y,x}$ fixing $[\lambda^I_J]$ which maps $\wh\lambda_y$ to $\wh\lambda_x$ and  $\wh\gamma_y$ to $\wh\gamma_x$, $x\in C_I^J\cap\Omega\cap U$. We extend the family to all $x\in C_I^J\cap\Omega$ and define $\wh\gamma_x:=\Phi_{y,x}(\wh\gamma_y)$. Next we extend it to $C_I^J$ keeping fixed its reduction $\wh\gamma^{[\lambda^I_J]}$, and making it equal $\gamma$ outside a neighborhood of $\Omega$. The same lemma implies that the constructed field $\wh\gamma$ is homotopic to $\gamma$ via a homotopy $\gamma_t$ with a fixed reduction $\gamma_t^{[\lambda^I_J]}$. Continuing this process we construct the required distribution $\wh\gamma$.   \end{proof}

 \subsection{Changing the homotopy class of the ridge directions}\label{section: changing} Finally we show how the local model constructed above can be used to prove Theorem \ref{theorem: homotopically aligned formal transversalization extension}. 

  \begin{proposition}\label{prop:modifying-hom}
  Let $\lambda$ be a tectonic field over a manifold $L$ which is  transverse to a Lagrangian distribution $\gamma$. Then there exists a homopically aligned tectonic field $\wh\lambda$ which is transverse to a Lagrangian distribution $\wh\gamma$ homotopic to $\gamma$. 
    \end{proposition}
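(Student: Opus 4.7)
The plan is to use iterated applications of the local surgery of Proposition \ref{prop:model-change} to modify the faults of $\lambda$ together with their ridge directions. Each application replaces a disk $F_1$ of a single fault by a disk $F_2$ on which the ridge direction is prescribed via a freely chosen rank 1 form $\nu$, and changes the ambient Lagrangian distribution $\gamma$ only by a compactly supported homotopy preserving transversality. By patching these surgeries into a locally finite family and choosing the new ridge directions to be aligned (or at least homotopic through linear automorphisms of $TL$) to the tangent hyperplanes of the new faults, the result will be a homotopically aligned tectonic field $\wh\lambda$ transverse to a distribution $\wh\gamma$ homotopic to $\gamma$.

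First I would cover an open neighborhood of the fault locus $\bigcup_j N_j$ by Darboux charts in which $\lambda$ is brought to the constant model form of Section \ref{section: the model}: faults along coordinate hyperplanes and constant rank 1 jumps $\mu_j$. Up to a small preliminary deformation (to be absorbed into the final homotopy of $\gamma$), one can assume $\gamma$ is also locally constant in each chart. This local reduction is available because any tectonic field is, at a point of depth-$k$ intersection of faults, symplectomorphic to the constant formal $k$-ridge model treated there.

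In each chart I would then apply Proposition \ref{prop:model-change} with the rank 1 form $\nu$ chosen so that, along the replacement disk $F_2$, the hyperplane $\ker(\nu)|_{F_2}$ agrees with $TF_2$. Iterating this over a locally finite family of charts covering the fault locus, and working inductively from the deepest intersection strata outward, produces a new tectonic field $\wh\lambda$ in which the ridge direction along every point of every new fault matches the tangent space of that fault, at least on the replacement disks. The required global homotopy $\wh\Psi_t$ of linear automorphisms of $TL$ is then assembled by interpolating between the identity (on the unsurgered regions) and the pointwise alignment maps (on the replacement disks), using the contractibility statement of Lemma \ref{lm:isom-plus-one} to bridge the two across overlap regions. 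Concatenating the compactly supported homotopies of $\gamma$ produced by the individual applications of Proposition \ref{prop:model-change} yields the homotopy $\wh\gamma \simeq \gamma$.

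The main obstacle I expect is the coherent handling of fault intersections. At a point where $m$ faults meet, an ordered formal $m$-ridge is present and the $m$ local surgeries involved cannot all be performed independently: modifying one rank 1 form $\mu_j$ requires the others to remain compatible with the transversality condition on ridge directions, and the order in which surgeries are performed matters. I expect the argument to proceed by an induction on the codimension of the intersection strata of the fault system within $L$, layered on top of the stratified induction already used in the proof of Proposition \ref{prop:model-change}, with Lemma \ref{lm:contr-fix-red} invoked at each inductive step to extend the homotopies of $\gamma$ from the lower-dimensional strata to the higher-dimensional ones without destroying transversality.
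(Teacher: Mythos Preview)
Your proposal has a genuine gap stemming from a misreading of what the local model in Proposition~\ref{prop:model-change} actually accomplishes. Each application does not ``realign'' a piece of an existing fault: the disk $F_1$ that is removed and the disk $F_2$ that replaces it are both small, and the remainder $N\setminus F_1$ of the original fault keeps its old (unaligned) ridge direction $\ker\mu_k$. Covering the fault locus by charts and performing the surgery in each one therefore leaves most of every fault unaligned. Moreover, you omit the most important feature of the model: the domain $\Omega$ has a third boundary component $F_3\simeq S^{n-1}$ disjoint from $Q^k$, and after the modification this sphere becomes a \emph{new} fault of $\wh\lambda$ with ridge direction $\ker\nu$. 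The entire point of the construction is this extra spherical fault, not the replacement $F_1\rightsquigarrow F_2$.

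The paper's argument is obstruction-theoretic and quite different from what you sketch. One fixes a simplicial refinement of the fault locus and attempts to build the homotopical alignment $\Psi_t$ skeleton by skeleton. Over a $k$-simplex $C$ lying in an $r$-fold fault intersection, the extension problem is governed by $\pi_{k-1}$ of the Stiefel manifold $V_{r,n}$; since $\pi_{k-1}(V_{r,n})=0$ for $r<n-k$ the only obstruction occurs when $r=n-k$, and it lies in $\pi_{k-1}(V_{n-k,n})$, which is $\bZ$ or $\bZ/2$. The role of Proposition~\ref{prop:model-change} is then precisely to change this obstruction class by $\pm 1$: one chooses $\nu$ so that $\ker\nu$ is tangent to the sphere $\Sigma$ (not to $F_2$), producing a new spherical fault that is already aligned and whose insertion shifts the homotopy class of the ridge field on the adjacent simplex $C$ by one unit. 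Iterating kills the obstruction, after which $\Psi_t$ extends over $C$. Your use of Lemma~\ref{lm:isom-plus-one} to ``bridge overlap regions'' is also misplaced; that lemma compares formal ridges under symplectic isomorphisms and plays no role at this stage.
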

    \begin{proof} Let $N_j$ denote the faults of $\lambda$. We will align the ridge directions inductively over the strata of the fault locus $\bigcup_j N_j$. In fact, we will align the outwards normal to $N_j = \partial \Omega_j$, where we recall $\Omega_j \subset L$ is a domain, and where an arbitrary Riemannian metric on $L$ is understood. 
    
    We begin by refining the stratification to a triangulation $\bigcup_j \Delta_j$, so that $\bigcup_{j} N_j$ is contained in the $(n-1)$-skeleton and moreover such the interior of each $k$-simplex is entirely contained in the locus where exactly $r$ of the faults $N_j$ intersect for some $r \leq n-k$ (which depends on the $k$-simplex).   
    
Choose an order of the faults $L_1, \ldots , L_m$. We recall that $N_j = \partial \Omega_j$ for some domain $\Omega_j \subset L$. Let $J \subset \{1, \ldots , m\}$ and consider a point $x \in \bigcap_{j \in J}  N_j\setminus \bigcup_{j \notin J}N_j$. Set $r=|J|$. We have two $r$-frames at $T_xL$. One is given by the $r$-tuple of outward normals to the domains $\Omega_j$, $j \in J$, with the induced order from $\{1,\ldots, m\}$. The other is given by the ridge directions of the tectonic field $\lambda$, with the same order. The co-orientation of the ridge directions (which are hyperplane fields) is specified by a choice of non-vanishing 1-forms $\ell_j$ on $TL|_{N_j}$ which square to the rank 1 quadratic form giving the jump of $\lambda$ over $N_j$. There are two choices for each $N_j$, but either will do. 

We begin our homotopical alignment along the 0-skeleton of the triangulation. At a point where $r \leq n$ of the faults meet we have two elements of $V_{r,n}$, the Stiefel manifold of $r$-frames in $\bR^n$, which is connected for $r<n$. So for points where $r<n$ we may define the homotopical alignment $\Psi_t$ in an arbitrary way, but for points where $r=n$ we may only align $n-1$ of the normals: there is a $\pi_0 V_{n,n}=\bZ/2$ obstruction to aligning the last one, namely the orientation of the frame.

We need to modify $\lambda$ near the points of the 0-skeleton where the obstruction is nontrivial. To do this, let $x \in L$ be such a point and choose $\nu=c\ell^2$ in Proposition \ref{prop:model-change} in such a way that the hyperplane $\tau=\{\ell=0\}$ is tangent to $\Sigma$ with the center of the sphere on an $(n-1)$-multiple fault point adjacent to $x$. This  removes the point $x$ from the 0-skeleton and creates three new $n$-multiple fault points, two of them on the new spherical fault. These last two $n$-multiple fault points can be arranged to have trivial $\bZ/2$ obstruction if we agree that the new spherical fault is the boundary of the domain $\Omega_0 \subset L$ formed by the $n$-ball it bounds and we agree to place this new fault first in our ordering of faults. The other new $n$-multiple point has trivial $\bZ/2$ obstruction by construction, since the relevant ridge direction has been modified by a half-turn, see Figure \ref{fig: obstructionresolution}. Note that according Proposition \ref{prop:model-change}, the new tectonic field is transverse to a Lagrangian distribution homotopic to $\gamma$.
      \begin{figure}[h]
\includegraphics[scale=0.65]{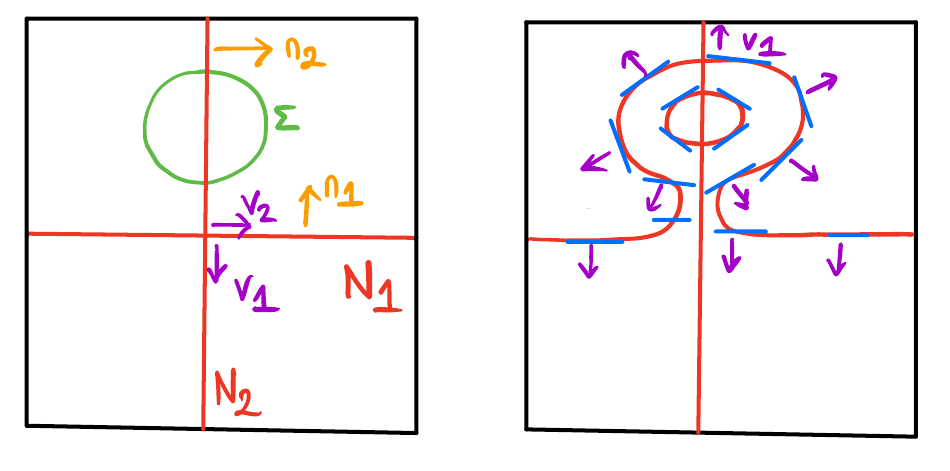}
\caption{The local modification for the obstruction $\pi_0 V_{n,n}=\bZ/2$ in the case $n=2$. In this example $(n_1,n_2)$ is the frame corresponding to the ordered pair of faults $N_1,N_2$ and $(v_1,v_2)$ is the frame corresponding to the ridge directions. We assume that $n_2$ and $v_2$ have been lined up but $n_1$ and $v_1$ differ by $-1$, so the orientations determined by the frames don't agree. The model changes the sign of $v_1$ at the order 2 ridge where $N_2$ intersects the new $N_1$, as well as creating two new order 2 ridges along the intersection of $N_2$ with the new spherical fault for which the $\bZ/2$ obstruction is trivial.}
\label{fig: obstructionresolution}
\end{figure}

We have achieved alignment of the frames on the 0-skeleton, the base case of our inductive argument. The inductive step is similar. Indeed, suppose that the homotopical alignment is defined on the $(k-1)$-skeleton of $L$ and let $C$ be a $k$-simplex in $\Delta_k$ where exactly $r \leq n-k$ of the faults intersect. We must extend the homotopical alignment from $\partial C \simeq S^{k-1}$ to $C \simeq D^k$. 
Since $\pi_{k} V_{r,n} =0$ for $k<n-r$ we can always homotopically align the ridge directions if $r< n-k$, and if $r=n-k$ we can align all but one. 

The obstruction to aligning that last ridge direction lies in $\pi_{k} V_{n-k,n} $, which is $\bZ$ if $n-k$ is even or $k=1$ and $\bZ/2$ if $n-k$ is odd and $k>1$. We claim that one can change this obstruction by $\pm 1$ by applying Proposition \ref{prop:model-change}. To see this consider the fibration
\[ S^k \to  V_{n-k,n} \to V_{n-k-1,n} \]
and the following portion of its long exact sequence in homotopy
\[ \bZ \simeq \pi_k S^k \to \pi_k V_{n-k,n} \to \pi_k V_{n-k-1,n} \simeq 0. \]

We deduce that one can realize the generator of $\pi_k V_{n-k,n}$ as the map $S^k \to V_{n-k,n}$ which fixes $n-k-1$ elements of the frame and lets the last element trace out a $k$-sphere in the complementary $k+1$ dimensional space. 

Returning to the proof, choose $\nu=c\ell^2$ in Proposition \ref{prop:model-change} in such a way that the hyperplane $\tau=\{\ell=0\}$ is tangent to $\Sigma$ with the center of the sphere on a $(k-1)$-multiple fault component $C'$ adjacent to $C$. Then we create a new spherical fault. From the above description of a generator for the cyclic group $\pi_k V_{n-k,n}$ it follows that performing this operation changes the homotopy class of the ridge field on the component $C$ by $\pm 1$ depending on the choice of the component $C'$, see Figure \ref{2Dcase}.
    
      \begin{figure}[h]
\includegraphics[scale=0.65]{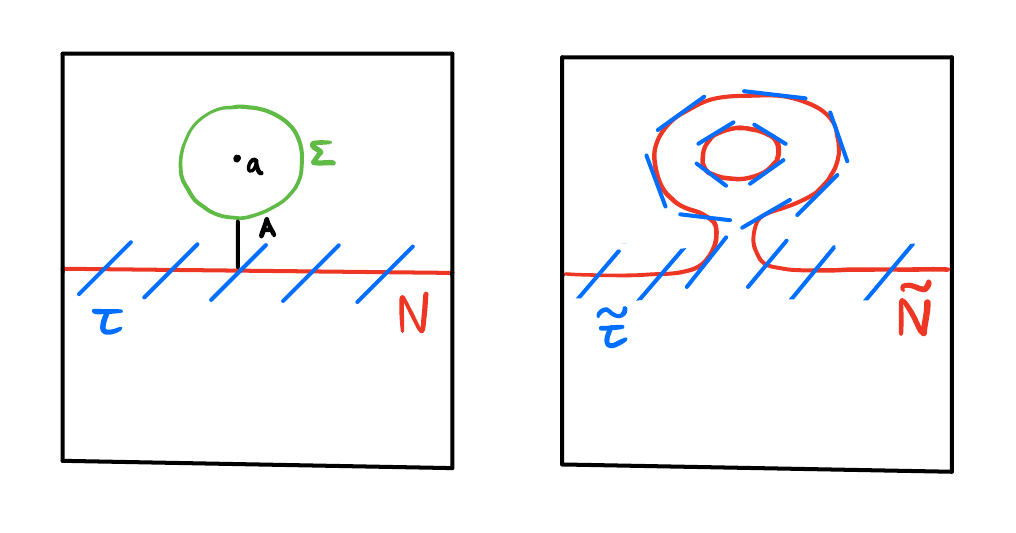}
\caption{The local modification in the case $n=2$, $k=1$. Note that the homotopy class of the line field changes by $\pm 1$ relative to the endpoints.}
\label{2Dcase}
\end{figure}
    

Hence, we can inductively adjust the homotopy classes of ridges along fault components of decreasing multiplicity until we get a homotopically aligned field $\widehat \lambda$ which according to Proposition \ref{prop:model-change} is transverse to a Lagrangian distribution homotopic to $\gamma$.
 \end{proof}
 
 \begin{remark} From the proof we see that Proposition \ref{prop:modifying-hom} holds in relative form. This means that if there exists a homotopy of linear isomorphisms $\Psi_t : T_xL \to T_xL$ such that $\Psi_0 = \text{id}_{T_xL}$ and $\Psi_1(\tau_j)=TN_j$ on $Op(A)$ for $A \subset L$ closed, then we can demand that $\widehat \lambda = \lambda$ on $Op(A)$ and that the homotopical alignment of $\wh \lambda$ agrees with $\Psi_t$ on $Op(A)$. Moreover, we can demand that the homotopy of $\gamma$ is constant on $Op(A)$. \end{remark}

\begin{proof}[Proof of Theorem \ref{theorem: homotopically aligned formal transversalization extension}] Consider the tectonic field $\wh \zeta$ which is produced by the formal transversalization Theorem \ref{theorem: formal transversalization extension}. Then by applying the relative form of Proposition \ref{prop:modifying-hom} to $\wh \zeta$ and $\gamma$ with $A=K_2$ we obtain the desired homotopically aligned tectonic field. \end{proof}

\section{Integrable solution}\label{section: integrable solution}
\subsection{Holonomic approximation of ridges}\label{section: holonomic}

We now turn to the proof of our main theorem \ref{theorem: main theorem}. Our first task will be to solve the transversalization problem near the ridge locus, where the homotopical information is concentrated. Since the ridge locus is a stratified subset of codimension $1$, we can apply the method of holonomic approximation.  



\begin{proposition}\label{prop: near ridges}
Let $\gamma$ be a Lagrangian field on $T^*L$. There exists a $C^0$-small ridgy isotopy $L_t \subset T^*L$ of $L_0=L$ such that $L_1 \pitchfork \gamma$ in a neighborhood $Op(R)$ of the ridge locus $R \subset L_1$ and such that there exists a Lagrangian field $\widetilde \gamma$ homotopic to $\gamma$ by a homotopy fixed on $Op(R)$ satisfying $L_1 \pitchfork \widetilde \gamma$ 
everywhere.
\end{proposition}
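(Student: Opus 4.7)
The plan is to produce a ridgy Lagrangian $L_1$ that realizes, in a neighborhood of its ridge locus $R$, the tangent plane field prescribed by an aligned formal solution to the transversality problem. Since the fault locus of an aligned tectonic field is a codimension-$1$ stratified subset of $L$ and the prescribed rank-$1$ jumps $\ell_j^2$ are conormal to it, the aligned formal data is integrable near the faults, fitting the holonomic approximation paradigm that is signposted in the paragraph preceding the statement.

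First, apply Theorem~\ref{theorem: aligned formal transversalization extension} with $K_1 = L$ and $K_2 = \varnothing$ to produce an aligned tectonic field $\zeta$ with fault locus $N = \bigcup_j N_j$ and a Lagrangian field $\widehat{\gamma}$ homotopic to $\gamma$ such that $\zeta \pitchfork \widehat{\gamma}$. Write each $N_j = \{\phi_j = 0\}$ with $\phi_j$ chosen so that $d\phi_j|_{N_j}$ is a positive multiple of $\ell_j$ (possible because alignment means $\ker(\ell_j) = TN_j$), and pick cut-off functions $\theta_j$ supported in narrow tubular neighborhoods of $N_j$. The earthquake isotopy $L_t = \{p = t\, d\Phi\}$ generated by $\Phi = \sum_j \theta_j (\phi_j^+)^2$ is $C^0$-small; after a time rescaling, its endpoint $L_1^{(0)}$ is a ridgy Lagrangian whose ridge locus projects to $N$ and whose tangent plane field across the ridges exhibits the prescribed jumps $\ell_j^2$. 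A subsequent small ambient Hamiltonian isotopy adjusts the smooth plate contributions so that, on a neighborhood of the ridge locus $R$ of the resulting $L_1$, the (multi-valued) Gauss map of $L_1$ approximates $\zeta$ to arbitrary $C^0$-precision. Openness of transversality then gives $L_1 \pitchfork \widehat{\gamma}$ on $Op(R)$.

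To upgrade this to $L_1 \pitchfork \gamma$ on $Op(R)$, observe that the homotopy $\widehat{\gamma} \simeq \gamma$ produced by the alignment step of Propositions~\ref{prop:model-change} and~\ref{prop:modifying-hom} can be made globally $C^0$-small: each local modification introduces a rank-$1$ form $\nu = c\ell^2$ whose coefficient $c$ is a free parameter and can be taken arbitrarily small while still effecting the required change of homotopy class of ridge directions (which is a topological rather than metric condition). Under this choice, the symplectic isomorphisms $\Phi_{0,x}$ used to define $\widehat{\gamma}$ in Proposition~\ref{prop:model-change} stay close to the identity, so $\widehat{\gamma}$ is $C^0$-close to $\gamma$ throughout $L$, and another application of openness of transversality yields $L_1 \pitchfork \gamma$ on $Op(R)$.

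Finally, construct $\widetilde{\gamma}$ by setting $\widetilde{\gamma} = \gamma$ on a slightly smaller neighborhood of $R$ (where transversality to $\gamma$ is established) and extending through Lagrangian fields transverse to $L_1$ on the complement; this is possible by the contractibility of the space of Lagrangian subspaces transverse to a fixed Lagrangian, applied fibrewise to the now single-valued Gauss map of $L_1$ away from $R$. The homotopy $\widetilde{\gamma} \simeq \gamma$ fixed on $Op(R)$ is then obtained by composing the trivial homotopy on $Op(R)$ with a Lagrangian-field homotopy on the complement, matched across the overlap using the $C^0$-closeness of $\widehat{\gamma}$ and $\gamma$. The main difficulty throughout is the compatibility between the formal transversality to $\widehat{\gamma}$ (which the aligned formal solution guarantees) and genuine transversality to the original $\gamma$ near the ridges, resolved by the $C^0$-smallness of the alignment homotopy.
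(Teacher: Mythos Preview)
Your identification of the main difficulty is exactly right: one must pass from transversality to $\widehat\gamma$ (guaranteed by the aligned formal solution) to transversality to $\gamma$ itself near the ridge locus. But your proposed resolution via $C^0$-smallness of the alignment homotopy is incorrect. The alignment step has two parts, and neither can be made $C^0$-small. First, the passage from homotopically aligned to aligned applies the symplectic lifts $\Phi_t$ of the linear isomorphisms $\Psi_t$ that rotate each $\tau_j$ onto $TN_j$; when $\tau_j$ and $TN_j$ are far apart (which nothing prevents), $\Phi_1$ moves $\gamma$ by a large amount. Second, in the local modifications of Proposition~\ref{prop:model-change} the form $\nu$ must agree with $\mu_k$ near $F_1$, so its coefficient is not globally free; and even where $\nu$ is small, the isomorphism $\Phi_{0,x}$ taking the $k$-ridge $\lambda_0$ (with last form $\mu_k$) to $\widehat\lambda_x$ (with last form $\nu(x)$) involves a large shear or rescaling in the $p_k$-direction when $\nu(x)$ is small, hence is \emph{not} close to the identity. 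The topological change of homotopy class is indeed insensitive to the magnitude of $\nu$, but the induced change in $\gamma$ is not.

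The paper resolves this differently. After constructing the graphical ridgy Lagrangian $\Lambda$ corresponding to an integrable tectonic field transverse to some $\widetilde\gamma\simeq\gamma$ (Lemma~\ref{lemma: model}), it does not attempt to make $\widetilde\gamma$ close to $\gamma$. Instead, Lemma~\ref{lemma: extension holonomic} applies the h-principle for directed Lagrangian embeddings (holonomic approximation for $1$-holonomic sections, from \cite{AG18a}) inductively over the plates $Q_j$, moving $\Lambda$ by a genuine Hamiltonian isotopy so that the deformed $\Lambda_1$ is transverse to $\gamma$ itself near $\partial Q_j$. The inductive bookkeeping (using Lemma~\ref{lemma: symplectic covering} to carry the transversality across the ridges, and the parametric version to keep track of the homotopy $\gamma_t^k$) is exactly what produces the $\widetilde\gamma$ with the required homotopy fixed on $Op(R)$. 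Your construction of $\widetilde\gamma$ in the last paragraph also has a gap: contractibility of Lagrangians transverse to a fixed one lets you extend $\gamma|_{Op(R)}$ to a section transverse to $L_1$, but gives no reason that this extension is homotopic to $\gamma$ rel $Op(R)$, since $\gamma$ is not itself transverse to $L_1$ on the complement.
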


\begin{remark}
The relative version is as follows: if $L \pitchfork \gamma$ on $Op(A)$ for $A \subset L$ a closed subset, then we can demand that $L_t=L$ on $Op(A)$, that $\widetilde \gamma= \gamma$ on $Op(A)$ and furthermore that the homotopy is fixed on $Op(A)$.
\end{remark}

     \begin{figure}[h]
\includegraphics[scale=0.65]{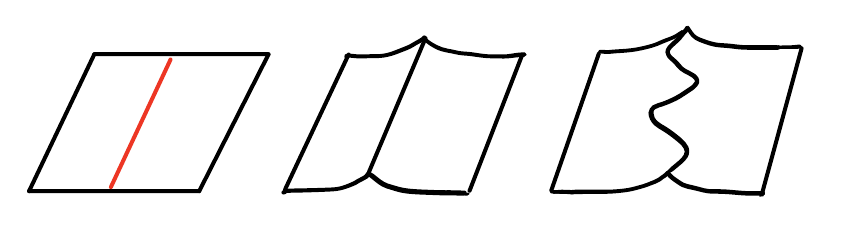}
\caption{Using holonomic approximation to wiggle the ridges.}
\label{wigglingridges}
\end{figure}

As a first step towards Proposition \ref{prop: near ridges} we have the following lemma.

\begin{lemma}\label{lemma: model}
There exists a $C^0$-small integrable tectonic field $\wt \zeta$ on $L$ which is transverse to a Lagrangian field $\wt \gamma$ homotopic to $\gamma$.
\end{lemma}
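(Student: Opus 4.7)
The plan is to apply Theorem \ref{theorem: aligned formal transversalization extension} to produce an aligned tectonic field, and then to realize it (up to $C^0$-small error) as the piecewise Hessian of a $C^1$ function, thereby obtaining an integrable tectonic field.

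First, I would apply Theorem \ref{theorem: aligned formal transversalization extension} with $K_1 = L$, $K_2 = \varnothing$, and starting from the zero tectonic field as the initial input. The $C^0$-closeness conclusion then yields a $C^0$-small aligned tectonic field $\zeta$ together with a Lagrangian field $\widehat\gamma$ homotopic to $\gamma$ such that $\zeta \pitchfork \widehat\gamma$.

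Second, I would use alignment to realize $\zeta$ as a piecewise Hessian. Since $\zeta$ is aligned, for each fault $N_j$ I can choose a defining function $\phi_j$ (so that $N_j = \{\phi_j = 0\}$ and $d\phi_j|_{N_j} \neq 0$) and express the discontinuity of $\zeta$ across $N_j$ as $\alpha_j (d\phi_j)^2$ for a smooth function $\alpha_j$. Following the earthquake construction of the introduction, the function
\[
H_{\mathrm{jump}} := \sum_j \theta_j\, \alpha_j\, (\phi_j^+)^2,
\]
with $\theta_j$ a cutoff equal to $1$ near $N_j$ and supported in a small tubular neighborhood, is $C^1$ and piecewise $C^2$, and its piecewise Hessian reproduces the jump part of $\zeta$ along the faults. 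To match the smooth interior part of $\zeta$ on each plate up to $C^0$-small error, I would add a smooth correction $h$ whose Hessian locally approximates the interior tensor field via a partition-of-unity argument on coordinate balls. Setting $H := H_{\mathrm{jump}} + h$, the piecewise Hessian $\wt\zeta := \text{Hess}(H)$ is an integrable tectonic field that is $C^0$-close to $\zeta$ and in particular $C^0$-small.

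Finally, since $\zeta \pitchfork \widehat\gamma$ and transversality is an open condition (at each $k$-ridge point the $2^k$ associated Lagrangian planes depend continuously on the tectonic data), the integrable tectonic field $\wt\zeta$ is transverse to $\widehat\gamma$ provided the $C^0$ error is sufficiently small. Setting $\wt\gamma := \widehat\gamma$ then completes the proof.

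I expect the main obstacle to be the smooth-interior approximation: a generic symmetric $2$-tensor field is not the Hessian of any function, so the correction $h$ can only approximate the interior part of $\zeta$ up to an error that must be absorbed by the margin of transversality. The resolution leverages the $C^0$-smallness of $\zeta$ inherited from the trivial initial data (so the interior piece to be integrated is itself small) together with the openness of $\pitchfork$; if needed, any residual discrepancy can be absorbed into a further arbitrarily small homotopy of $\widehat\gamma$ within its homotopy class, preserving the conclusion that the resulting $\wt\gamma$ is homotopic to $\gamma$.
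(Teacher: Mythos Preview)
Your first step (producing an aligned $\zeta$ with $\zeta \pitchfork \widehat\gamma$) and your construction of $H_{\text{jump}}$ are fine and match the paper. The gap is in your second step: you try to $C^0$-approximate the \emph{entire} tectonic field $\zeta$ by a piecewise Hessian, correcting the interior values on each plate by a smooth $h$ built via partition of unity. But gluing local quadratic models $h_\alpha$ with cutoffs $\psi_\alpha$ produces error terms $\text{Hess}(\psi_\alpha)h_\alpha$ and $d\psi_\alpha \odot dh_\alpha$ which, for balls of radius $r$, are of order $|\zeta|$ rather than $o(|\zeta|)$. Since the transversality margin of $\zeta \pitchfork \widehat\gamma$ can itself be of order $|\zeta|$ (precisely at the points where the ridges are doing essential work), the openness argument does not close. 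Your own final paragraph flags this and proposes absorbing the discrepancy into a homotopy of $\widehat\gamma$; that instinct is correct, but it is the heart of the argument, not a fallback, and it needs to be made exact rather than approximate.

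The paper's proof bypasses the approximation entirely. It only integrates the \emph{jump} part: using alignment, one writes down a piecewise $C^2$ function $r = \sum_j r_j$ supported near the faults whose Hessian $\widetilde\zeta$ has exactly the same discontinuities as $\zeta$. The key observation is then that $\lambda := \zeta - \widetilde\zeta$ is a \emph{smooth} graphical Lagrangian field, since the jumps cancel. One now sets $\widetilde\gamma := \widehat\gamma - \lambda$ on the region where $\widehat\gamma$ is graphical (and interpolates to $\widehat\gamma$ outside, where $\widetilde\zeta$ is automatically transverse to it since $\widetilde\zeta$ is bounded). On that region one has the exact identity $\widetilde\gamma - \widetilde\zeta = \widehat\gamma - \zeta$, so $\widetilde\zeta \pitchfork \widetilde\gamma$ is literally equivalent to $\zeta \pitchfork \widehat\gamma$. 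No approximation of the interior part of $\zeta$ is needed at all, and $\widetilde\gamma$ is visibly homotopic to $\widehat\gamma$ (hence to $\gamma$) via the linear homotopy $\widehat\gamma - t\lambda$.
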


\begin{proof}
We begin by invoking Theorem \ref{theorem: aligned formal transversalization}, which produces an aligned tectonic field $\zeta$ on $L$ such that $\zeta \pitchfork \wh \gamma$ for $\wh \gamma$ a Lagrangian distribution homotopic to $\gamma$.
The tectonic field $\zeta$ jumps discontinuously along a fault $N_j$ by a family of rank 1 forms $\mu_j$. Since $\ker(\mu_j)=TN_j$, we can write $\mu_j(x)=f_j(x) du_j^2$ where $(x,u_j) \in N_j \times (-\ve, \ve)$ are tubular neighborhood coordinates for $N_j = N_j \times 0$ in  $L$ and $f_j :N_j\to \bR$ is a nowhere vanishing smooth function. By reversing the orientation of $(-\ve, \ve)$ if necessary we may assume that $\zeta_1^+ = \zeta_1^- + \mu_j$, where $\zeta_1^{+}$ and $\zeta_1^{-}$ are the extensions of $\zeta_1|_{ \{u_j<0\} }$ and $\zeta_1|_{\{u_j>0\}}$ to $N_j$ respectively. Consider the function 
\[ h_j(x,u_j) =  \frac{1}{2}\psi(u_j) f_j(x)u_j^2 , \]
where $\psi: (-\ve,\ve) \to [0,1]$ is a cutoff function such that $\psi=1$ near $0$ and $\psi=0$ near $\pm \ve$. Hence along $N_j$ we have $\text{Hess}(h_j) = \mu_j$. Note that $h_j$ is compactly supported in the tubular neighborhood $N_j \times (-\ve,\ve)$.  
Consider next the function

\[r_j(x,u_j)=
 \begin{cases} 
    \, \, \,  \,  \frac{1}{2}h_j(x,u_j)  &u_j \geq 0, \\
    
    -  \frac{1}{2}h_j(x,u_j) & u_j \leq0 ,
   \end{cases}
\]
\\
which also has compact support in the tubular neighborhood $N_j \times (-\ve,\ve)$. Set $r= \sum_j r_j : L \to \bR$, a piecewise $C^2$ function which (after choosing an auxiliarty Riemannian metric) generates an integrable tectonic field $\wt \zeta$. Note that if $\zeta$ was $C^0$-small then $\wt \zeta$ is also $C^0$-small. Moreover, we claim that $\wt \zeta \pitchfork \wt\gamma$ for $\widetilde \gamma$ a Lagrangian distribution homotopic to $\gamma$.

Consider the tectonic field $\lambda =   \zeta -  \widetilde \zeta $. Mote that it is a continuous Lagrangian field, because the discontinuities of $\zeta$ are exactly canceled by those of $\widetilde \zeta$. Observe that $\widetilde \zeta$ is graphical, hence $\det(\widetilde \zeta)$ is bounded. Let $\Omega_C=\{  |\det( \wh \gamma )| < C \}$. It follows that for $C>0$ large enough, we have $\widetilde \zeta \pitchfork \wh \gamma$ outside of $\Omega_C$. Moreover for $C>0$ large enough $\wh \gamma$ is homotopic to a Lagrangian plane field $\wt \gamma$ which is equal to $\wh \gamma-\lambda$ on $\Omega_C$ and transverse to $\widetilde \zeta$ outside of $\Omega_C$. Note that the expression $\wh \gamma - \lambda$ makes sense on $\Omega_C$ because $\wh \gamma$ is graphical on $\Omega_C$. 
The claim, and therefore also the Lemma, now follow. Indeed, the condition $\widetilde \zeta \pitchfork \widetilde \gamma$ on $\Omega_C$ is equivalent to the nonsingularity of the form $\widetilde \gamma - \widetilde \zeta = (\wh \gamma-\lambda)-(\zeta - \lambda ) = \wh \gamma-\zeta$, which is in turn equivalent to $\zeta \pitchfork \wh \gamma$, which is true. \end{proof}

\begin{remark}
In the relative version where $L \pitchfork \gamma$ and $\zeta=0$ on $Op(A)$ for $A \subset L$ a closed set, we demand that the homotopy between $\gamma$ and $\widetilde \gamma$ is constant on $Op(A)$.
\end{remark}



We also need the following elementary fact.

\begin{lemma}\label{lemma: symplectic covering} Let $f_t: \Lambda \to M$ be a Lagrangian isotopy, i.e. an exact regular homotopy of Lagrangian embeddings of a compact manifold $\Lambda$ into a symplectic manifold $M$ and for $i=0,1$, let $\gamma_i \subset TM$ be a Lagrangian plane field along $f_i$ which is transverse to $df_i(T \Lambda)$. Then there exists a compactly supported Hamiltonian isotopy $\varphi_t:M \to M$ such that $\varphi_t \circ f_0 = f_t$ and $d \varphi_1(\gamma_0)=\gamma_1$.
\end{lemma}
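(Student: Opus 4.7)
The plan is to construct $\varphi_t$ in two stages: first apply the standard extension of an exact Lagrangian isotopy to an ambient Hamiltonian isotopy, and then insert a correction supported near $f_1(\Lambda)$ which adjusts the transverse Lagrangian plane field without disturbing the isotopy of $f_0(\Lambda)$.

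First I would invoke the classical extension: since $f_t$ is exact, the velocity vector field $\partial_t f_t$ corresponds under $\omega$ to an exact 1-form $dh_t$ on $f_t(\Lambda)$; extending each $h_t$ to a compactly supported Hamiltonian $H_t$ on $M$, the Hamiltonian flow $\psi_t$ of $H_t$ is compactly supported and satisfies $\psi_t\circ f_0=f_t$. Setting $\widetilde{\gamma}_0=d\psi_1(\gamma_0)$, both $\widetilde{\gamma}_0$ and $\gamma_1$ are Lagrangian plane fields along $f_1(\Lambda)$ transverse to $df_1(T\Lambda)$. Because the space of Lagrangian complements to a given Lagrangian in a symplectic vector space is an affine space over the space of quadratic forms (hence contractible), the section space of such complements is likewise contractible, so I can choose a smooth path $\gamma_s$, $s\in[0,1]$, of Lagrangian plane fields along $f_1(\Lambda)$ from $\widetilde{\gamma}_0$ to $\gamma_1$, all transverse to $df_1(T\Lambda)$.

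Next I would realize this path by a Hamiltonian isotopy fixing $f_1(\Lambda)$ pointwise. Identify a neighborhood of $f_1(\Lambda)$ with a neighborhood of the zero section in $T^*\Lambda$ via the Weinstein neighborhood theorem. Any Hamiltonian $K$ vanishing to second order on the zero section has a Hamiltonian vector field vanishing on $f_1(\Lambda)$, hence a flow fixing $f_1(\Lambda)$ pointwise, and its fiberwise Hessian along the zero section determines the infinitesimal action on transverse Lagrangian complements. Choosing a time-dependent family $K_s$ whose Hessian realizes $\dot{\gamma}_s$ produces a compactly supported Hamiltonian isotopy $\chi_s$ with $\chi_s|_{f_1(\Lambda)}=\mathrm{id}$ and $d\chi_1(\widetilde{\gamma}_0)=\gamma_1$.

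Finally I would splice the correction into the first isotopy. The subtlety here—and the only real obstacle—is that the correction must not disturb $\psi_t\circ f_0=f_t$ for any intermediate $t$, even though its natural habitat is near $f_1(\Lambda)$ rather than along the moving locus $f_t(\Lambda)$. The resolution is to transport $K_s$ back along $\psi_t$, obtaining a family $K_t = \beta(t)\,(\psi_t\circ\psi_1^{-1})^*K_{\sigma(t)}$ with $\beta$ a cut-off in $t$ and $\sigma$ a reparametrization. Since $\psi_t$ is a symplectomorphism, $K_t$ still vanishes to second order on $f_t(\Lambda)$, so $X_{K_t}|_{f_t(\Lambda)}=0$ and the flow of $H_t+K_t$ is a compactly supported Hamiltonian isotopy $\varphi_t$ that coincides with $\psi_t$ on $f_0(\Lambda)$, giving $\varphi_t\circ f_0=f_t$; the cut-off and reparametrization are arranged so that the total $K$-contribution at $t=1$ equals $\chi_1$, yielding $d\varphi_1(\gamma_0)=\gamma_1$ as required.
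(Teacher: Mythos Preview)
Your approach is essentially the paper's: reduce via the ambient extension $\psi_t$ (the paper phrases this as ``taking a family of Weinstein neighborhoods for $f_t$'', thereby reducing to $M=T^*\Lambda$, $f_t=\mathrm{id}$, $\gamma_0=\nu$), and then move the transverse Lagrangian field by the flow of a fiberwise-quadratic Hamiltonian.

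Your splicing in step~4 is more complicated than needed and slightly off. First, the pullback is in the wrong direction: $(\psi_t\circ\psi_1^{-1})^*K_{\sigma(t)}$ vanishes to second order on $(\psi_t\circ\psi_1^{-1})^{-1}(f_1(\Lambda))=\psi_1\psi_t^{-1}(f_1(\Lambda))$, not on $f_t(\Lambda)$. Second, the flow of $H_t+K_t$ does not factor in a way that lets you isolate ``the total $K$-contribution at $t=1$'' as $\chi_1$, so the claimed identity $d\varphi_1(\gamma_0)=\gamma_1$ is not actually justified by what you wrote. The clean fix, which is really what the paper does after its reduction, is simply to set $\varphi_t=\psi_t\circ\chi_t$, where $\chi_t$ is a Hamiltonian isotopy fixing $f_0(\Lambda)$ pointwise (built from a fiberwise-quadratic Hamiltonian in a Weinstein neighborhood of $f_0(\Lambda)$) with $d\chi_1(\gamma_0)=d\psi_1^{-1}(\gamma_1)$; then $\varphi_t\circ f_0=\psi_t\circ f_0=f_t$ and $d\varphi_1(\gamma_0)=d\psi_1\, d\chi_1(\gamma_0)=\gamma_1$ directly.
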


\begin{proof}
By taking a family of Weinstein neighborhoods for $f_t$ we reduce to the case $M=T^*\Lambda$,  $f_t=\text{id}_\Lambda$ and $\gamma_0=\nu$ (the vertical distribution). Since $\gamma_1$ is transverse to the zero section, we can think of $\gamma_1$ as family of quadratic forms on the fibres $\lambda_q : T^*_q\Lambda \to \bR$. Then the required Hamiltonian isotopy $\varphi_t$ is given by the quadratic Hamiltonian $H(q,p)=\lambda_q(p)$, cut off at infinity. 
\end{proof}

\begin{remark}
From the proof we also deduce the relative version: if $f_t=f_0$ and $\gamma_0=\gamma_1$ on $Op(A)$ for $A \subset \Lambda$ a closed subset, then we can demand that $\varphi_t=\text{id}_M$ on $Op(A)$.
\end{remark}

Proposition \ref{prop: near ridges} follows immediately from the following lemma.

\begin{lemma}\label{lemma: extension holonomic}
Let $\Lambda \subset T^*L$ be a ridgy Lagrangian, $R$ its ridge locus, and $\gamma$ a Lagrangian field. Suppose that $\gamma$ is homotopic to a Lagrangian field $\wh \gamma$ which is transverse to $\Lambda$. Then there exists a Hamiltonian isotopy $\Lambda_t$ of $\Lambda$  and a Lagrangian field $\widetilde{\gamma}$ which agrees with $\gamma$ on $Op(R)$ such that: \begin{itemize}
\item $\Lambda_1 \pitchfork \gamma$ on $Op(R)$.
\item $ \Lambda_1 \pitchfork \widetilde \gamma$ everywhere.
\item $\widetilde{\gamma}$ is homotopic to $\gamma$ by a homotopy fixed on $Op(R)$.
\end{itemize}
Moreover, if $A \subset \Lambda$ is a closed subset and the homotopy between $\gamma$ and $\wh \gamma$ is fixed on $Op(A)$, then the Hamiltonian isotopy and the homotopy between $\widetilde \gamma$ and $\gamma$ can both be chosen to be fixed on $Op(A)$.
\end{lemma}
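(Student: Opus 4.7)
The plan is to combine holonomic approximation near the ridge locus with a cutoff/interpolation argument for the Lagrangian field away from the ridges.

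For the Hamiltonian isotopy: reversing the homotopy $\gamma_t$ from $\gamma$ to $\wh\gamma$, together with the hypothesis $\wh\gamma \pitchfork \Lambda$, supplies a formal Lagrangian isotopy of $\Lambda$ whose endpoint is transverse to $\gamma$. Since the ridge locus $R$ is stratified of codimension $\geq 1$ in $L$, the holonomic approximation theorem applies along $R$: it produces an arbitrarily $C^0$-small wiggling of $R$ and an arbitrarily $C^0$-small ambient Hamiltonian isotopy $\Lambda_t$, supported in a small neighborhood $U$ of the wiggled ridge, such that $\Lambda_1 \pitchfork \gamma$ on $Op(R_1) \subset U$, where $R_1$ is the (deformed) ridge locus of $\Lambda_1$. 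One proceeds by induction from the deepest stratum outward: at a codimension-$k$ ridge point $\Lambda$ carries $2^k$ tangent Lagrangian sheets and we must move them simultaneously, but the formal $k$-ridge structure established in Section \ref{section: formal solution} together with the standard symplectic local model of $R_{k,n}$ allows this. Since the perturbation is Hamiltonian, $\Lambda_1$ remains a ridgy Lagrangian.

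For the global field $\widetilde\gamma$: choose a slightly larger neighborhood $V \supset \ol U$. Define $\widetilde\gamma := \gamma$ on $U$ and $\widetilde\gamma := \wh\gamma$ outside $V$. On the transition annulus $V \setminus U$ we have $\Lambda_1 = \Lambda$, and both $\gamma$ (by the previous step) and $\wh\gamma$ (by hypothesis) are transverse to $\Lambda_1$. Since the space of Lagrangian planes transverse to a fixed Lagrangian parametrizes symmetric bilinear forms and is therefore contractible, we can interpolate on $V \setminus U$ through a path of Lagrangian fields all transverse to $\Lambda_1$. Patching produces a global Lagrangian field $\widetilde\gamma$ with $\Lambda_1 \pitchfork \widetilde\gamma$ everywhere and $\widetilde\gamma = \gamma$ on $Op(R)$. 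The required homotopy $\widetilde\gamma \simeq \gamma$, fixed on $Op(R)$, is assembled from the interpolation on $V \setminus U$ concatenated with (the appropriate restriction of) the given homotopy $\gamma_t$ on the region outside $V$ where $\widetilde\gamma = \wh\gamma$.

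The main obstacle is the first step: executing holonomic approximation compatibly with the stratified ridge structure so that the output is still ridgy, which requires the inductive assembly of per-sheet approximations described above. The relative version is immediate: if the homotopy $\gamma \simeq \wh\gamma$ is already fixed on $Op(A)$ for a closed $A \subset \Lambda$, then both the holonomic approximation (whose wiggling and support we may localize) and the cutoff-interpolation (carried out away from $A$) can be made to respect $A$, ensuring that $\Lambda_t = \Lambda$, $\widetilde\gamma = \gamma$, and the final homotopy are constant on $Op(A)$.
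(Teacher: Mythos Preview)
There are two genuine gaps.

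\textbf{First gap (the isotopy near $R$).} Holonomic approximation is a theorem about a \emph{smooth} Lagrangian along a stratified subset; it does not apply directly to a ridgy $\Lambda$ along its singular locus $R$. At an order-$k$ ridge you must make $2^k$ distinct tangent planes simultaneously transverse to $\gamma$ \emph{by a single ambient Hamiltonian isotopy}, and your appeal to ``the formal $k$-ridge structure together with the standard local model'' does not explain how to coordinate the per-sheet approximations into one ambient motion. The paper's solution is different in organization: it inducts over the smooth plates $Q_1,\dots,Q_m$ of $\Lambda\setminus R$, applying holonomic approximation to the \emph{smooth} Lagrangian $Q_{k+1}$ along $\partial Q_{k+1}$. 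The key extra ingredient you are missing is Lemma~\ref{lemma: symplectic covering}: once the isotopy is produced on $Q_{k+1}$, the ambient Hamiltonian extension is chosen so that $d\varphi_1(\gamma^k_1)=\gamma$ along $\partial Q_{k+1}$. Since $\gamma^k_1\pitchfork\Lambda_k$ everywhere by the inductive hypothesis, this forces $\gamma=d\varphi_1(\gamma^k_1)\pitchfork\varphi_1(\Lambda_k)$ along the ridge, hence on \emph{all} adjacent sheets at once. Without this device (or an equivalent), your induction over strata of $R$ does not yield a single ambient isotopy keeping $\Lambda$ ridgy.

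\textbf{Second gap (the field $\widetilde\gamma$).} Your interpolation on $V\setminus U$ asserts that ``both $\gamma$ (by the previous step) and $\wh\gamma$ (by hypothesis) are transverse to $\Lambda_1$'' there. But the previous step only gives $\Lambda_1\pitchfork\gamma$ on $\Op(R_1)\subset U$; on the annulus $V\setminus U$ you have $\Lambda_1=\Lambda$ and hence $\wh\gamma\pitchfork\Lambda_1$, but there is no reason whatsoever for $\gamma\pitchfork\Lambda_1$ there. So the two endpoints of your proposed interpolation do not both lie in the (contractible) space of planes transverse to $\Lambda_1$, and the patching fails. The paper avoids this by carrying through the induction not just the isotoped $\Lambda_k$ but also a \emph{global} field $\gamma^k_1$ transverse to $\Lambda_k$ everywhere, together with a homotopy $\gamma^k_t$ from $\gamma$ already fixed near the processed ridges; the new homotopy $\gamma^{k+1}_t$ is built as a concatenation of $\gamma^k_t$ with $d\varphi_t(\gamma^k_1)$, and then corrected near $\partial Q_{k+1}$ using the parametric form of holonomic approximation and the contractibility of transverse Lagrangians. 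This bookkeeping is exactly what your cutoff argument is trying to shortcut, and it cannot be shortcut.
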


\begin{proof}[Proof of Proposition \ref{prop: near ridges}]
Indeed, let $\Lambda \subset T^*L$ be the graphical ridgy Lagrangian corresponding to an integrable tectonic field $\widetilde \zeta$, as in Lemma \ref{lemma: model} and apply Lemma \ref{lemma: extension holonomic}.
\end{proof}

It therefore remains to prove Lemma \ref{lemma: extension holonomic}.

\begin{proof}[Proof of Lemma \ref{lemma: extension holonomic}] Denote by $R \subset \Lambda$ the ridge locus and denote $\Lambda \setminus R =Q_1 \cup Q_2 \cup \cdots \cup Q_m$, where each $\ol Q_j$ is a smooth Lagrangian submanifold with corners.

We will prove by induction that for each $k=0,1,\ldots, m$ there exists a  ridgy Lagrangian $\Lambda^k \subset T^*L$ satisfying the following properties.
\begin{itemize}

\item[(a)] $\Lambda^k$ is Hamiltonian isotopic to $\Lambda$
\item[(b)] $\Lambda^k \pitchfork \gamma$ in a neighborhood of $\partial Q^k_1 \cup \cdots \cup \partial Q^k_k$, where  $\ol Q^k_j \subset \Lambda^k$ corresponds to $\ol Q_j \subset \Lambda$ under the Hamiltonian isotopy.
\item[(c)] There exists a homotopy $\gamma^k_t$ of $\gamma^k_0=\gamma$, fixed in a neighborhood of $\partial Q^k_1 \cup \cdots \cup \partial Q^k_k$, such that $\Lambda^k \pitchfork \gamma^k_1$ everywhere.
\end{itemize}

\emph{Base case} $(k=0)$.  In this case we take $\Lambda^0=\Lambda$ so condition (a) is vacuous. Since $k=0$, so is (b). Finally, condition (c) is the hypothesis of Lemma \ref{lemma: extension holonomic}.


\emph{Inductive step} $(k \Rightarrow k+1)$. We apply the holonomic approximation theorem for 1-holonomic sections to the Lagrangian submanifold $\ol Q_{k+1}^k$ and the stratified subset $\partial Q^k_{k+1}$. Technically one should slightly enlarge $\ol Q_{k+1}$ so that $\partial Q_{k+1}$ sits in its interior but this will not affect the proof. The precise result we need is the h-principle for Lagrangian embeddings which are $D$-directed along a stratified subset, which is Theorem 1.20 in \cite{AG18a}. For our application we take $D$ to consist of all Lagrangian planes transverse to $\gamma$. Condition (c) of the inductive hypothesis implies that the Gauss map of $\ol Q^k_{k+1}$ is homotopic to a map with image in $D$, moreover this homotopy can be taken relative to a neighborhood of $\partial  Q^k_1 \cup \cdots \cup \partial Q^k_k$.

The output of the h-principle is a Hamiltonian isotopy $\varphi_t:T^*L \to T^*L$ such that $\varphi_1(\ol Q^k_{k+1}) \pitchfork \gamma$ in a neighborhood of $\varphi_1 (\partial Q^k_{k+1})$. Moreover, by the parametric version of the h-principle we may assume that $\varphi_t(\ol Q^k_{k+1})$ is transverse to $\gamma^k_{1-t}$ for all $t \in [0,1]$ in that same neighborhood. By Lemma \ref{lemma: symplectic covering} we may assume that $d\varphi_1(\gamma^k_1)=\gamma$ along $\partial Q^k_{k+1}$. Hence we have $\varphi_1(\Lambda) \pitchfork \gamma$ in a neighborhood of $\varphi_1(\partial Q^k_{k+1})$, i.e. also on the other side of the ridges outside of $\ol Q^k_{k+1}$. Moreover, by the relative version of the holonomic approximation theorem we can demand that $\varphi_t=\text{id}_{T^*L}$ in a neighborhood of $\partial Q^k_1 \cup \cdots \cup \partial Q^k_k $. It follows that if we set $\Lambda^{k+1}=\varphi_1(\Lambda^k)$, then conditions (a) and (b) are satisfied for $k+1$ instead of $k$.

It remains to verify condition (c). Consider the homotopy $\gamma^{k+1}_t$ of $\gamma$ which is given by the concatenation of first $\gamma^k_t$ and then $d\varphi_t(\gamma^k_1)$. The result is transverse to $\Lambda^{k+1}=\varphi_1(\Lambda^k)$ because $\gamma^k_1$ is transverse to $\Lambda^k$. Although this homotopy is constant in a neighborhood of $\partial Q^{k+1}_1 \cup \cdots \cup \partial Q^{k+1}_k$, it is not constant near $\partial Q^{k+1}_{k+1}$. To fix this we recall that $d \varphi_1(\gamma^k_1)=\gamma$ along $\partial Q^{k}_{k+1}$ and that $\varphi_t(\Lambda^k)$ is transverse to $\gamma^k_{1-t}$ along $\partial Q^{k+1}_k$. Since the space of linear Lagrangian planes transverse to a fixed linear Lagrangian plane is contractible, by parametrically interpolating between $d \varphi_t(\gamma^k_1)$ and $\gamma^k_{1-t}$ we can cancel out the concatenations using a cutoff function and thus deform the homotopy so that it is constant in a neighborhood of $\partial Q^{k+1}_{k+1}$. This completes the proof of the inductive step.

Finally, the relative form of the statement follows by applying the relative form of the holonomic approximation lemma at each stage of the induction.
\end{proof}

\begin{remark}
Since the holonomic approximation lemma holds in $C^0$-close form, at each step of the proof we can ensure $C^0$-closeness to the previous step and thus obtain that the resulting isotopy is $C^0$-small.
\end{remark}

\begin{remark}
Suppose that $K_1, K_2 \subset \Lambda$ are two disjoint compact subsets. Then given the hypotheses of Lemma \ref{lemma: extension holonomic} we may produce a Hamiltonian isotopy $\Lambda_t$ and a Lagrangian field $\widetilde \gamma$ such that the conclusion of the Lemma is satisfied on $Op(K_1)$ and both the isotopy $\Lambda_t$ and the homotopy between $\widetilde \gamma$ and $\gamma$ is constant on $Op(K_2)$. This follows simply by reparametrizing the time co-ordinate of the isotopy and homotopy using a function $\psi: \Lambda \to [0,1]$ which is 1 on $K_1$ and 0 on $K_2$.
\end{remark}

\subsection{Ridgification of wrinkles}

To conclude the proof of Theorem \ref{theorem: main theorem}, we need to take the ridgy Lagrangian produced by Proposition \ref{prop: near ridges} and further deform it in the complement of the ridge locus so that it becomes transverse to $\gamma$. To achieve this we use a wrinkling technique, namely the transversalization theorem for wrinkled Lagrangian embeddings.

\begin{theorem}[Theorem 5.1 of \cite{AG18b}]\label{thm: wrinkles} Let $\Lambda \subset M$ be a Lagrangian submanifold, $\gamma \subset TM$ a Lagrangian distribution, and $\gamma_t \subset TM$ a homotopy of Lagrangian distributions such that $\lambda_0=\lambda$ and $\Lambda \pitchfork \gamma_1$. Then there exists a $C^0$-small exact homotopy of wrinkled Lagrangian embeddings $\Lambda_t$ of $\Lambda$ such that $\Lambda_1 \pitchfork \gamma$. The result holds in relative form with respect to a closed subset $A \subset M$, i.e. if $\gamma_t$ is constant on $Op(A)$ then the homotopy of wrinkled Lagrangian embeddings can be taken to be constant on $Op(A)$. \end{theorem}

A wrinkled Lagrangian embedding is a smooth Lagrangian embedding outside of a disjoint union of codimension 1 contractible spheres $S \subset \Lambda$. Here and below, by a contractible sphere $S \subset \Lambda$ we mean a sphere that bounds an embedded disk.

Along each such sphere $S$ the embedding has cuspidal singularities of the form $\{p^2=q^3\} \times \bR^{n-1} \subset T^* \bR \times T^*\bR^{n-1}$, see Figure \ref{cuspidal}, with a codimension 1 equatorial sphere $\Sigma \subset S$ where the cuspidal singularities experience birth/death. The precise model near the equator is not important since, in the spirit of Entov \cite{En97}, we can surger away the birth/death singularities. More precisely, one can open up each sphere $S$ along its equator into two parallel spheres so that $\Lambda_1$ becomes a Lagrangian submanifold which is smooth away from the disjoint union of finitely many pairs of contractible parallel spheres $S_1 \cup S_2$ where the Lagrangian has cuspidal singularities. Moreover, this can be achieved while maintaining exactness and transversality to $\gamma$. The proof is simply to implant an explicit local model given by a generating function graphical over $\gamma$, just as in Section 6.1 of \cite{AG18b}, hence exactness and transversality are automatic. 

We note that this cuspidal Lagrangian can be smoothed so that the resulting smooth Lagrangian has fold tangencies on $S_1 \cup S_2$ with opposite Maslov co-orientations, known as \emph{double folds}. By performing this smoothing one deduces the h-principle for the simplification of caustics, which is Theorem 1.11 in \cite{AG18b}. However, it will be easier for us to work with cuspidal Lagrangians directly, implanting a local model for the ridgification of cusps.

\begin{remark}
Even though this will not be important, we note that the pairs of spheres $S_1 \cup S_2 \subset \Lambda$   could be nested, in the sense that we could have $A_1 \subset A_2$ for $A_i \simeq S^{n-1} \times [0,1]$, $i=1,2$, the codimension 0 annuli in $\Lambda$ corresponding to two pairs of parallel spheres.
\end{remark}

      \begin{figure}[h]
\includegraphics[scale=0.4]{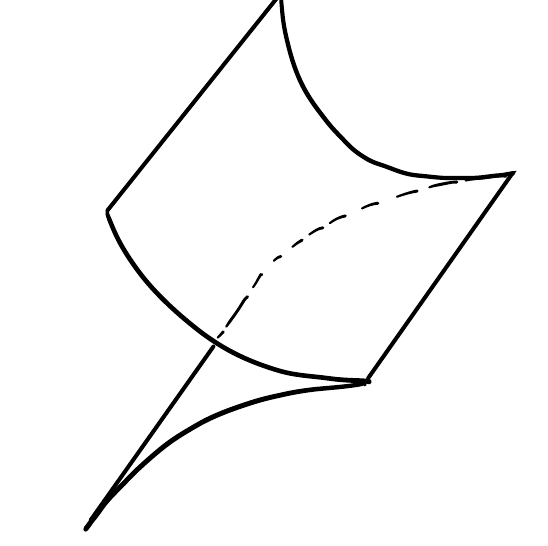}
\caption{A cuspidal Lagrangian singularity is a stabilization of the standard semi-cubical cusp $\{p^2=q^3\} \subset T^*\bR$.}
\label{cuspidal}
\end{figure}

We deduce the following consequence of Theorem \ref{thm: wrinkles}.

\begin{corollary}\label{thm: h-p} Let $\Lambda \subset M$ be a Lagrangian submanifold, $\gamma \subset TM$ a Lagrangian distribution and $\gamma_t$ a homotopy of Lagrangian distributions such that $\gamma_0=\gamma$ and $\Lambda \pitchfork \gamma_1$. Then there exists a $C^0$-small ridgy isotopy $\Lambda_t$ of $\Lambda$ such that $\Lambda_1 \pitchfork \gamma$. The result holds in relative form with respect to a closed subset $A \subset M$, i.e. if $\gamma_t$ is constant on $Op(A)$ then the ridgy isotopy can be taken to be constant on $Op(A)$. \end{corollary}

\begin{proof}
We need to modify the wrinkled Lagrangian embedding $\Lambda_1$ produced by Theorem \ref{thm: wrinkles} to make it ridgy. We first resolve the equators of the wrinkles into cuspidal singularities as above while maintaining  exactness and transversality to $\gamma$. Next, by a local interpolation at the level of generating functions we can replace these cuspidal singularities with stabilizations of order 1 ridges $\{q=|p|\}$ while still maintaining exactness and transversality to $\gamma$.  

Explicitly, take a cut-off function $\sigma:[0,\infty)\to[0,1]$ which is equal to $1$ on $[0,\frac12]$, equal to $0$ outside $[0,1]$. Consider the generating function with variable $q \geq 0$ given by
\[ z = \frac{\varepsilon}{2} \sigma \left( \frac{q}{\varepsilon} \right) q^2 + \frac{2}{5} \big( 1 - \sigma \left( \frac{q}{\varepsilon} \right) \big) q^{5/2} . \]
The function $\pm z$ generates a ridgy Lagrangian $\Lambda_\varepsilon \subset T^*\bR$ for $\varepsilon>0$. Note that
\[ \frac{\partial z}{\partial q} =  \frac{1}{2} \sigma'\left(\frac{q}{\varepsilon}\right)q^2 +  \varepsilon \sigma\left(\frac{q}{\varepsilon} \right) q - \frac{2}{5\varepsilon}\sigma'\left(\frac{q}{\varepsilon}\right)q^{5/2} +  \big( 1- \sigma\left(\frac{q}{\varepsilon} \right) \big) q^{3/2} \]
\[ = q^{3/2} +  \Big( \frac{1}{2} \sigma'\left(\frac{q}{\varepsilon}\right)q^2 + \varepsilon \sigma\left(\frac{q}{\varepsilon} \right) q - \frac{2}{5\varepsilon}\sigma'\left(\frac{q}{\varepsilon}\right)q^{5/2}  - \sigma\left(\frac{q}{\varepsilon} \right)  q^{3/2}  \Big) \quad = \quad  q^{3/2} + O(\varepsilon) \]
since we have the obvious bound $$ \sigma^{(k)}\left(\frac{q}{\varepsilon}\right)q^r \leq \| \sigma \|_{C^k} \, \,  \varepsilon^r.$$ 
Hence as $\varepsilon \to 0$, the ridgy Lagrangian $\Lambda_\varepsilon$ gets $C^0$-close to the cuspidal Lagrangian generated by $z=\pm \frac{2}{5}q^{5/2}$. Furthermore, we have \[ \frac{\partial^2 z}{\partial q^2} = \frac{3}{2}q^{1/2} + O(\sqrt{\varepsilon})\]
since $\partial^2 z / \partial q^2 - \frac{3}{2}q^{1/2}$ is equal to 
\[ \frac{1}{2 \varepsilon}\sigma''\left(\frac{q}{\varepsilon} \right) q^2 + \sigma'\left(\frac{q}{\varepsilon}\right)q + \sigma'\left(\frac{q}{\varepsilon} \right) q + \varepsilon \sigma \left( \frac{q}{\varepsilon} \right) - \frac{2}{5 \varepsilon^2} \sigma''\left( \frac{q}{\varepsilon} \right) q^{5/2} - \frac{1}{\varepsilon} \sigma'\left(\frac{q}{\varepsilon} \right) q^{3/2} - \frac{1}{\varepsilon} \sigma'\left( \frac{q}{\varepsilon} \right) q^{3/2} - \frac{3}{2} \sigma\left(\frac{q}{\varepsilon}\right) q^{1/2} \]
and we can bound each term as before. Hence in fact as $\varepsilon \to 0$ the ridgy Lagrangian $\Lambda_\varepsilon$ gets $C^1$ close to the cuspidal Lagrangian generated by $z = \pm \frac{2}{5}z^{5/2}$.


Therefore by parametrically implanting a 1-dimensional model we can replace the exact homotopy of wrinkled Lagrangian embeddings $\Lambda_t$ with a ridgy isotopy which at time 1 is transverse to $\gamma$. This ridgy isotopy consists of an earthquake isotopy on the preimage of the cuspidal locus of $\Lambda_1$ in $\Lambda$ followed by an ambient Hamiltonian isotopy, the existence of which is guaranteed because we ensured exactness at every stage by working at the level of generating functions.  \end{proof}

\begin{remark}
The ridge locus of the ridgy Lagrangian $\Lambda_1$ produced by Corollary \ref{thm: h-p} therefore consists of a disjoint union of parallel contractible spheres, which may be nested. 
\end{remark}

\begin{proof}[Proof of the main theorem \ref{theorem: main theorem}]
We apply Proposition \ref{prop: near ridges} to $L$ and $\gamma|_L$ inside a Weinstein neighborhood $U$ of $L$ in $M$. Indeed, $U$ is symplectomorphic to $T^*_{\leq \delta}L = \{ || p || < \delta \}$, where $\delta>0$ and we use an auxiliary Riemannian metric on $L$. If the resulting ridgy Lagrangian is sufficiently $C^0$-close to the zero section then it will remain in this Weinstein neighborhood, hence can be viewed as a ridgy Lagrangian in $M$. We then apply the relative (with respect to the ridge locus) version of Corollary  \ref{thm: h-p} to the output of Proposition \ref{prop: near ridges}.  \end{proof}

      \begin{figure}[h]
\includegraphics[scale=0.6]{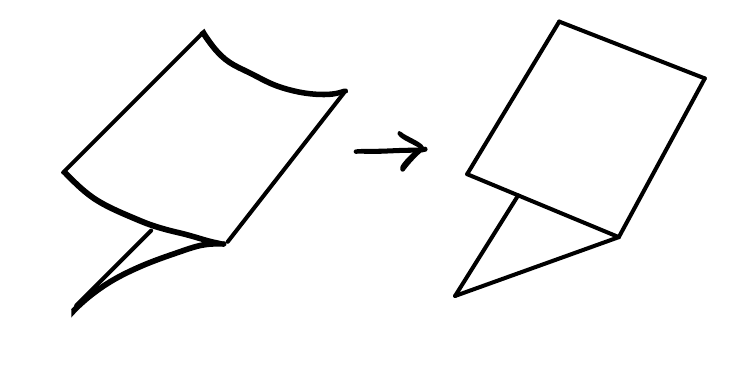}
\caption{Replacing a cuspidal singularity with an order 1 ridge.}
\label{swap}
\end{figure}

\begin{remark}
The ridge locus of the ridgy Lagrangian solving the transversalization problem consists of the fault locus of the formal solution together with a union of parallel contractible spheres.
\end{remark}

\section{Adapted version}\label{section: adapted version}

\subsection{Adapted transversalization}

Let $L$ be an $n$-dimensional compact manifold with boundary and corners. Recall that $L$ has a corner of order $k \leq n$ at $x \in L$ if there is a neighborhood of $x$ in $L$ diffeomorphic to a neighborhood of the origin in $[0,1)^k \times \R^{n-k}$. We denote the locus of order $k$ corners by $\partial_k L_k$. 
The closure $P$ of a connected component of $\p_kL$ is called a  boundary $k$-face. For $k=1$ we will more simply call $P$ a boundary face.

If $Q$ is a $k$-face, then there is an embedded collar neighborhood $Q \times [0,1)^k \subset L$. We consider the germs of these collars as part of the structure. In particular, near each point $x \in \partial_k L$ we have canonical collar coordinates $x=(y,t)$, where $y \in \partial_k L$ and $t=(t_1, \ldots , t_k) \in [0,1)^k$. Note that in a neighborhood of $x$ we have $\partial_k L$ cut out by $t_1 = \cdots = t_k=0$. More generally, for $j\leq k$ the components of $\partial_j L$ whose closure contains $x$ are given by setting exactly $j$ of the coordinates $t_i$ equal to zero.

\begin{definition}
A Lagrangian field $\lambda$ on $L$ (possibly tectonic) is said to be:
\begin{itemize}
\item \emph{horizontally adapted} if $\lambda = \lambda_k \times \tau^k \subset T^*(\partial_k L) \times (T^*\cI)^k$ near each $x \in \partial_k L$, $k=1,2,\ldots, n$.
\item \emph{vertically adapted} if $\lambda = \lambda_k \times \nu^k \subset T^*(\partial_k L) \times (T^* \cI)^k$ near each $x \in \partial_k L$, $k=1,2,\ldots, n$.
\end{itemize}
\end{definition}
\begin{definition} A Lagrangian submanifold (possibly ridgy) $\Lambda \subset T^*L$ is said to be \emph{adapted} if $\Lambda=\Lambda_k \times \cI^k \subset T^*(\partial_k L) \times (T^*\cI)^k$ near each $x \in \partial_kL$, $k=1,2,\ldots ,n$. A (possibly ridgy) isotopy of Lagrangian submanifolds $\Lambda_t \subset T^*L$ is said to be \emph{adapted} if each $\Lambda_t$ is adapted.
\end{definition}

      \begin{figure}[h]
\includegraphics[scale=0.5]{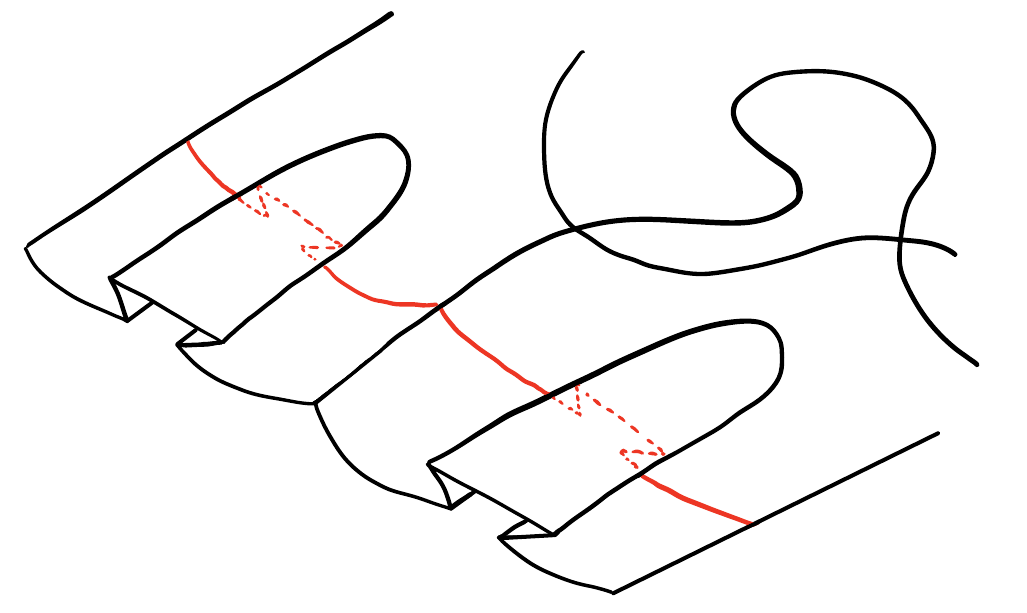}
\caption{An adapted ridgy Lagrangian near the collar.}
\label{collaredridgy}
\end{figure}

\begin{remark}
If $\Lambda \subset T^*L$ is adapted, then $T\Lambda$ is horizontally adapted.
\end{remark}

We can now state the adapted version of our main theorem \ref{theorem: main theorem}.

\begin{theorem}\label{theorem: main theorem adapted}
For any vertically adapted Lagrangian field $\gamma \subset T^*L$ there exists an adapted ridgy isotopy $L_t$ of the zero section $L_0=L$ such that $L_1 \pitchfork \gamma$. 
\end{theorem}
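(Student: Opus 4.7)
The plan is to mirror the four-part architecture of the proof of Theorem \ref{theorem: main theorem} (formal solution, alignment, holonomic approximation near ridges, ridgification of wrinkles), but to execute each part by a downward induction on the corner order $k$. The reduction driving the induction is the following simple observation: at a point $x \in \partial_k L$ with collar coordinates $(y,t) \in \partial_k L \times [0,1)^k$, vertical adaptedness gives $\gamma = \lambda_k \times \nu^k$ in $T^*(\partial_k L) \times (T^*\cI)^k$; and if $\Lambda \subset T^*L$ is adapted then $T\Lambda = T\Lambda_k \times T\cI^k$ automatically meets $\nu^k$ transversely. Hence the adapted transversalization problem localizes on $\partial_k L$ and the extension to a collar neighborhood is given for free by the product ansatz $\Lambda_k \times \cI^k$.

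First I would prove the adapted analogue of Theorem \ref{theorem: formal transversalization extension} by induction on $k$, descending from $k=n$ to $k=0$. The base $k=n$ is vacuous: there $\partial_n L$ is a discrete set, $\lambda_n$ is trivial and $\gamma$ is purely vertical, so the zero section is already transverse. Assuming an adapted tectonic field has been built on $\op(\partial_{\geq k+1}L)$, I would apply the relative form of Theorem \ref{theorem: formal transversalization extension} on $(\partial_k L, \lambda_k)$ with $K_2$ a small neighborhood of $\partial(\partial_k L)$ where the inductive output already solves the problem, and then extend through the collar by the product with the zero section of $(T^*\cI)^k$. The same downward induction, combined with the fact that the modification of Section \ref{section: the model} is localized around spheres $\Sigma$ which can always be pushed off the boundary, yields the adapted version of Theorem \ref{theorem: aligned formal transversalization}: the relative version of Proposition \ref{prop:modifying-hom} guarantees that alignment achieved at corners of higher order is preserved.

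Next I would carry out the adapted analogue of Proposition \ref{prop: near ridges}. The integration step of Lemma \ref{lemma: model} is local in $L$ and respects product splittings, so it produces an adapted graphical ridgy Lagrangian whose Gauss map is homotopic to an adapted Lagrangian distribution $\wt\gamma$ transverse to it. The inductive holonomic approximation argument of Lemma \ref{lemma: extension holonomic} then applies, using the relative form of the Lagrangian holonomic approximation theorem (Theorem 1.20 of \cite{AG18a}) to perform each wiggle inside a single face and extend through the collar by the product structure; Lemma \ref{lemma: symplectic covering} also admits a relative form that preserves product splittings. Finally, for the adapted version of Corollary \ref{thm: h-p} I would apply Theorem \ref{thm: wrinkles} strictly in the interior of $L$ (in the relative form, with $A$ a collar neighborhood of $\partial L$ where transversality has already been achieved), and ridgify the resulting wrinkles by the same generating-function interpolation as before; since the wrinkling and its ridgification are performed strictly inside $L$, the adapted structure near $\partial L$ is automatically preserved.

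The main obstacle is bookkeeping: at each of the four steps one must check that the relative form of the underlying h-principle genuinely propagates the product-collar structure, not merely the equality $\Lambda = \Lambda_{\mathrm{old}}$ on $\op(\partial L)$. For Steps 1 and 2 this is transparent because the constructions are by linear combinations of rank-$1$ forms supported in coordinate patches, and one can always arrange the patches to either lie in a face-collar or be disjoint from $\partial L$. For Steps 3 and 4 the key point is that the input formal data is already adapted, so by uniqueness up to contractible choice of symplectic covers (Lemma \ref{lemma: symplectic covering}) one can homotope the covering Hamiltonian so that it preserves the collar product; this is where the parametric relative h-principle is really used. Once these compatibilities are verified, the four adapted steps concatenate to give the desired adapted ridgy isotopy $L_t$ with $L_1 \pitchfork \gamma$.
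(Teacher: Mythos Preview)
Your Steps 1--3 are essentially what the paper does, and your bookkeeping remarks about rank-$1$ forms and product extensions are correct. The gap is in Step 4. After the adapted analogue of Proposition \ref{prop: near ridges}, the ridgy Lagrangian $L_1$ is transverse to $\gamma$ only on $\Op(R)$, where $R$ is the ridge locus; on the smooth pieces of $L_1$ lying in the collar of $\partial L$ but away from $R$, transversality to $\gamma$ has \emph{not} been achieved (there you only know $L_1 \pitchfork \widetilde\gamma$, and the homotopy $\gamma\simeq\widetilde\gamma$ is fixed only on $\Op(R)$). Hence you cannot take $A$ to be a full collar neighborhood of $\partial L$ when invoking the relative form of Theorem \ref{thm: wrinkles}: the hypothesis of that relative form is that the homotopy of $\gamma$ is constant on $\Op(A)$, equivalently $L_1 \pitchfork \gamma$ there, and this simply fails on the smooth plates near $\partial L$.

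The paper's fix is to run the wrinkling step itself inductively over the strata $\partial_k L$, applying Theorem \ref{thm: wrinkles} parametrically on each face. The wrinkled embedding theorem is parametric but only relative to the locus where the embedding is already smooth, so the cuspidal spheres created on $\partial_k L$ must be made to die out as one moves into the collar; this is done via the homotopically canonical birth/death cancellation of the parallel cusp spheres (Figure \ref{birthdeath}). The subtlety your proposal misses entirely is that inserting this birth/death model can destroy transversality to $\gamma$, so after replacing the cusps by ridges one must, before passing to the next stratum, rerun the holonomic approximation of Section \ref{section: holonomic} in a neighborhood of those new ridges. Only after this interleaved wrinkling/holonomic-approximation pass over all boundary strata does one obtain genuine transversality on a collar of $\partial L$, at which point the interior can be finished as you describe.
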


Theorem \ref{theorem: main theorem adapted} also holds in $C^0$-close and relative forms. 
The proof of Theorem \ref{theorem: main theorem adapted} proceeds just like in the unadapted case: first we construct a formal solution, then we align it and finally we integrate it. We must argue that the same proof works while ensuring that all the objects are adapted to the collar structure at each step.

\subsection{Adapted formal transversalization}

The adapted version of the formal transversalization theorem \ref{theorem: formal transversalization} reads as follows.

\begin{theorem} For any vertically adapted Lagrangian field $\gamma$ there exists a horizontally adapted tectonic field $\lambda$ such that $\lambda \pitchfork \gamma$.
\end{theorem}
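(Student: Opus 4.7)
The plan is to adapt the proof of Theorem \ref{theorem: formal transversalization extension} to manifolds with boundary and corners by a double induction: an outer induction on $n = \dim L$ (with trivial base case $n=0$) and, for each dimension, an inner induction on the corner depth from the deepest stratum down to the interior. The guiding observation is that both adaptations behave well under reduction to faces: if $P$ is a face of depth $k$ with canonical collar $P \times [0,1)^k$, the vertical adaptation decomposes $\gamma = \gamma_P \times \nu^k$ near $P$; since $P$ is itself a manifold with corners of dimension $n-k$ whose corner strata arise from intersections with other faces of $L$, the induced collar germs on $P$ make $\gamma_P$ a vertically adapted Lagrangian field on $P$, to which the outer inductive hypothesis can be applied.

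The inner induction runs from the maximum depth $d$ downward. At $k = d$, every component $P$ of $\partial_d L$ is a closed manifold without further corners, so the outer inductive hypothesis yields a horizontally adapted tectonic field $\lambda_P$ on $P$ with $\lambda_P \pitchfork \gamma_P$; these assemble into a tectonic field on $Op(\partial_d L)$ by taking the product $\lambda_P \times \tau^d$ on each collar. For the step $k+1 \Rightarrow k$, suppose $\lambda$ is already defined on $Op(\partial_{\geq k+1} L)$, horizontally adapted and transverse to $\gamma$. For each component $P$ of $\partial_k L$, the intersection $P \cap Op(\partial_{\geq k+1} L)$ is an open neighborhood of $\partial P \subset P$ on which $\lambda$ reduces (via its horizontal product structure) to a horizontally adapted tectonic field $\lambda_P$ transverse to $\gamma_P$. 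Apply the \emph{relative} form of the outer inductive hypothesis to $P$ (which has dimension $n-k < n$) with closed subset $\partial P$ to extend $\lambda_P$ to all of $P$ while keeping it fixed on $Op(\partial P)$; then extend $\lambda$ to the $P$-collar by the product $\lambda_P \times \tau^k$.

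Once the inner induction concludes, $\lambda$ is a horizontally adapted tectonic field defined on $Op(\partial L)$ and transverse to $\gamma$ there. To finish the construction on the interior, apply the relative form of the (non-adapted) Theorem \ref{theorem: formal transversalization extension}, taking $K_2$ to be a closed neighborhood of $\partial L$ slightly smaller than where $\lambda$ has been built and $K_1$ a compact set covering the rest of $L$. The resulting extension preserves horizontal adaptation automatically, since it is constant on $Op(\partial L)$ where the adaptation was already enforced.

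The principal obstacle is verifying that the relative hypothesis of the outer induction genuinely applies at each inner step: one must check that the field $\lambda_P$ assembled on $Op(\partial P)$ from previous inner-inductive products is indeed a horizontally adapted tectonic field on $P$, with faults in $P$ extending consistently from the higher-depth constructions. This reduces to compatibility of the successive product constructions at iterated face intersections, which follows from the compatibility of collar germs in the manifold with corners structure, together with the fact that the horizontal distributions $\tau^j$ corresponding to different collar directions commute as symplectic products, so that the product operations at different depths may be performed in any order without affecting the resulting tectonic field.
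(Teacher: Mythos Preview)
Your proof is correct and follows essentially the same scheme as the paper: induct over the corner strata $\partial_k L$ from the deepest stratum outward, build the tectonic field on each stratum, extend to the collar by taking the product with $\tau^k$, and finally fill in the interior using the relative form of the non-adapted theorem. The only organizational difference is that you package the construction on each face $P$ as a recursive call to the full adapted theorem in lower dimension (hence your outer induction on $\dim L$), whereas the paper dispenses with this outer induction and instead directly covers each stratum $\partial_{k-1}L$ by balls, applying Lemma~\ref{lemma: local extension} one ball at a time. Unwinding your recursion yields exactly the paper's argument, so the two are equivalent; your formulation is slightly more conceptual, the paper's slightly more direct.
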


The extension form \ref{theorem: formal transversalization extension} of the result also has its adapted version.

\begin{theorem}\label{theorem: formal transversalization extension adapted} Let $\gamma$ be a vertically adapted Lagrangian field and $\zeta$ a horizontally adapted tectonic field. For any two disjoint compact subsets $K_1,K_2 \subset L$ there exists a horizontally adapted tectonic field $\wh \zeta$ such that the following properties hold.
\begin{itemize}
\item $\wh \zeta$ is $C^0$-close to $\zeta$,
\item $\wh \zeta \pitchfork \gamma$ on $Op(K_1)$.
\item $\wh \zeta = \zeta$ on $Op(K_2)$.
\end{itemize}
\end{theorem}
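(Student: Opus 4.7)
The plan is to induct on the depth of the corner strata, reducing at each stage to the non-adapted extension theorem \ref{theorem: formal transversalization extension}. Set $L_k := \overline{\bigcup_{j \geq k} \partial_j L}$, so that $L = L_0 \supset L_1 \supset \cdots \supset L_n$. We construct, for $k = n, n-1, \ldots, 0$, a horizontally adapted tectonic field $\wh \zeta^{(k)}$ which is $C^0$-close to $\zeta$, agrees with $\zeta$ on $Op(K_2)$, and is transverse to $\gamma$ on $Op(K_1 \cap L_k)$. The desired field $\wh \zeta := \wh \zeta^{(0)}$ then satisfies all conclusions of the theorem.

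For the base case $k = n$, set $\wh \zeta^{(n)} := \zeta$. In the collar near the discrete stratum $\partial_n L$, the horizontally and vertically adapted decompositions give $\zeta = \zeta_n \times \tau^n$ and $\gamma = \gamma_n \times \nu^n$; since $\partial_n L$ is zero-dimensional and $\tau \pitchfork \nu$ in $T^*\cI$, transversality on $Op(\partial_n L)$ is automatic.

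For the inductive step $k+1 \Rightarrow k$, choose a collar $V \cong \partial_k L \times [0,\eps)^k$ of the stratum $\partial_k L$ inside $L$ and an open neighborhood $W$ of $L_{k+1}$ in $L$ in which $\wh \zeta^{(k+1)} \pitchfork \gamma$ already holds. On $V$ the adapted conditions give product decompositions $\wh \zeta^{(k+1)} = \zeta_k^{(k+1)} \times \tau^k$ and $\gamma = \gamma_k \times \nu^k$, and at any $(y,t) \in V$ the condition $\wh \zeta^{(k+1)} \pitchfork \gamma$ reduces to $\zeta_k^{(k+1)} \pitchfork \gamma_k$ at $y \in \partial_k L$. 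We apply Theorem \ref{theorem: formal transversalization extension} to the pair $(\gamma_k, \zeta_k^{(k+1)})$ on $\partial_k L$ with $K_1' := K_1 \cap \partial_k L$ and $K_2' := (K_2 \cap \partial_k L) \cup (W \cap \partial_k L)$; the manifold $\partial_k L$ has boundary lying in $L_{k+1}$, but since $K_2'$ contains a neighborhood of that boundary, the theorem applies to the interior. The output is a tectonic field $\wt \zeta_k$ on $\partial_k L$, $C^0$-close to $\zeta_k^{(k+1)}$, equal to $\zeta_k^{(k+1)}$ on $K_2'$, and transverse to $\gamma_k$ on $Op(K_1')$. We then set $\wh \zeta^{(k)} := \wt \zeta_k \times \tau^k$ on $V$ and $\wh \zeta^{(k)} := \wh \zeta^{(k+1)}$ elsewhere; because $\wt \zeta_k = \zeta_k^{(k+1)}$ on $W \cap \partial_k L$ and at the outer edge of $V$, these pieces glue into a well-defined horizontally adapted tectonic field on $L$ satisfying all the required properties at stage $k$.

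The main obstacle is organizational rather than conceptual: one must keep the fault geometry compatible with the collar structure throughout the induction, and in particular ensure that the tectonic field produced on each stratum $\partial_k L$ extends product-wise into the collar without introducing faults that cross into deeper strata or undo earlier work. This is enforced by requiring $K_2'$ to contain a neighborhood of $\partial(\partial_k L) \subset L_{k+1}$, so that the relative form of Theorem \ref{theorem: formal transversalization extension} leaves the reduced field untouched near that boundary and the resulting product field matches $\wh \zeta^{(k+1)}$ on the overlap region. A minor ancillary point is that Theorem \ref{theorem: formal transversalization extension} is applied here to a manifold $\partial_k L$ with boundary and corners, but this poses no difficulty: either one thickens $\partial_k L$ slightly across its boundary before applying the theorem, or one applies the theorem directly on the interior relying on the relative form with $K_2'$ containing a collar of $\partial(\partial_k L)$.
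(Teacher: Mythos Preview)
Your proposal is correct and follows essentially the same strategy as the paper: induct over the corner strata $\partial_k L$ starting from the deepest, solve the reduced problem on each stratum, and extend as a product with the horizontal distribution $\tau^k$ in the collar directions. The only packaging difference is that you invoke Theorem \ref{theorem: formal transversalization extension} as a black box on each stratum $\partial_k L$, whereas the paper re-runs the ball-by-ball argument using Lemma \ref{lemma: local extension} directly; since Theorem \ref{theorem: formal transversalization extension} is itself proved by iterating Lemma \ref{lemma: local extension}, the two are equivalent.

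One small point to tighten: your gluing clause ``$\wt\zeta_k=\zeta_k^{(k+1)}$ \ldots\ at the outer edge of $V$'' is not actually arranged by the construction, since $\wt\zeta_k$ differs from $\zeta_k^{(k+1)}$ on all of $\partial_k L$ away from $K_2'$. The clean fix is to recall that the correction $\wt\zeta_k-\zeta_k^{(k+1)}$ produced by Theorem \ref{theorem: formal transversalization extension} is a finite sum of terms each compactly supported in a ball, so one can extend these corrections as products into a thin collar and let the faults die out (or simply shrink the collar on which the product form is imposed, since horizontal adaptedness is a germ condition). The paper is equally informal on this point, so this is a cosmetic issue rather than a genuine gap.
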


To prove Theorem \ref{theorem: formal transversalization extension adapted} one inductively applies Lemma \ref{lemma: local extension}, just as in the proof of Theorem \ref{theorem: formal transversalization extension}. The only difference is that before constructing $\wh \zeta$ in the interior of $L$ one constructs $\wh \zeta$ in a neighborhood of $\partial L$,  inductively over the strata $\partial_k L$. Start with the deepest stratum $\partial_n L$ where there is nothing to prove. At each step of the induction one has a horizontally adapted tectonic field $\wh \zeta$ defined over a neighborhood of $\bigcup_{j\geq k } \partial_j L$ which satisfies the required properties. To continue with the induction one chooses a cover of $\partial_{k-1}L$ by balls and applies Lemma \ref{lemma: local extension} in the manifold $\partial_{k-1}L$, one ball at a time. Multiplying the resulting tectonic field by the horizontal distribution in the collar direction provides the extension and so the induction can continue. Once the horizontally adapted tectonic field $\wh \zeta$ has been built in a neighborhood of $\partial L$ it can be extended to the rest of $L$ as in the unadapted case. 

\subsection{Adapted aligned formal transversalization}

The adapted version of the aligned analogue Theorem \ref{theorem: aligned formal transversalization} of Theorem \ref{theorem: formal transversalization} reads as follows.

\begin{theorem}\label{aligned formal transversalization adapted} For any vertically adapted Lagrangian field $\gamma$ there exists a horizontally adapted aligned tectonic field $\lambda$ such that $\lambda \pitchfork \gamma$.
\end{theorem}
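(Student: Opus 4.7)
The plan is to follow the strategy of the unadapted Theorem~\ref{theorem: aligned formal transversalization}, combining two ingredients: the adapted formal transversalization of Theorem~\ref{theorem: formal transversalization extension adapted} and an adapted analogue of the alignment procedure of Proposition~\ref{prop:modifying-hom}. First I would apply Theorem~\ref{theorem: formal transversalization extension adapted} to obtain a horizontally adapted tectonic field $\zeta$ with $\zeta \pitchfork \gamma$. The remaining task is to modify $\zeta$ into an aligned tectonic field while preserving horizontal adaptedness, at the cost of deforming $\gamma$ through vertically adapted Lagrangian fields (as in the unadapted statement).

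To execute the alignment in the collared setting, I would proceed by induction on the depth of the corner stratification, starting at the deepest stratum $\partial_n L$ and working outward. The key reduction exploits the fact that in collar coordinates near $x \in \partial_k L$ the data decomposes as $\gamma = \gamma_k \times \nu^k$ and $\zeta = \zeta_k \times \tau^k$; since the horizontal factors $\tau$ are automatically transverse to the vertical factors $\nu$, the only nontrivial transversality and alignment conditions reduce to conditions on $\zeta_k$ relative to $\gamma_k$ on the face $\partial_k L$. I would therefore apply the unadapted Proposition~\ref{prop:modifying-hom} directly to $(\gamma_k, \zeta_k)$ on each $k$-face and promote the resulting aligned tectonic field and homotopy back to the ambient collar by taking products with the trivial $\tau^k$ and $\nu^k$ factors, respectively. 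Passing from the depth $k+1$ data to the depth $k$ data I would invoke the relative form of Proposition~\ref{prop:modifying-hom}, so that the newly constructed alignment matches the previously constructed one on overlaps. A final application of the relative form on the interior of $L$ would extend the alignment across all of $L$.

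The hard part will be verifying that the local model of Section~\ref{section: the model}, which implants a spherical fault to change the homotopy class of a ridge direction by $\pm 1$, can be carried out compatibly with the collar product structure. Concretely, whenever a spherical modification is placed near a $k$-face one needs to choose the sphere $\Sigma$ and auxiliary arc $A$ of product form $\Sigma_k \times \{0\}^k \subset \partial_k L \times [0,1)^k$, and the rank $1$ form $\nu$ supporting the modification must be pulled back from $\partial_k L$. Correspondingly, the deformation of $\gamma$ produced by Proposition~\ref{prop:model-change} must be chosen in product form on the collar; this should follow from the fact that the linear symplectic isomorphisms $\Phi_{y,x}$ of Lemmas~\ref{lm:isom-plus-one} and~\ref{lm:contr-fix-red} can be chosen to respect the splitting into horizontal and vertical factors. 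Once these compatibilities are in hand, the arguments of Propositions~\ref{prop:model-change} and~\ref{prop:modifying-hom} go through in the adapted category and produce the desired horizontally adapted aligned tectonic field $\lambda$.
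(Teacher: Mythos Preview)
Your proposal is correct and follows essentially the same approach as the paper: start from the horizontally adapted tectonic field supplied by Theorem~\ref{theorem: formal transversalization extension adapted}, then align it by induction over the corner strata $\partial_k L$ (deepest first), implanting the spherical modifications of Section~\ref{section: the model} in product form with respect to the collar coordinates so that the tectonic field stays horizontally adapted and the accompanying homotopy of $\gamma$ stays vertically adapted. Your identification of the ``hard part'' --- that $\Sigma$, $\Omega$, and $\nu$ must be chosen as products and that the linear isomorphisms of Lemmas~\ref{lm:isom-plus-one} and~\ref{lm:contr-fix-red} respect the splitting --- is exactly the point the paper is making when it says the model is ``adapted to the collar structure, i.e.\ given as a product in the collar co-ordinates.''
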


More generally, we have the adapted version of the aligned extension result Theorem \ref{theorem: aligned formal transversalization extension}.

\begin{theorem}\label{theorem: aligned formal transversalization extension adapted} Let $\gamma$ be a vertically adapted Lagrangian field and $\zeta$ a horizontally adapted aligned tectonic field. For any two disjoint compact subsets $K_1,K_2 \subset L$ there exists a horizontally adapted aligned tectonic field $\wh \zeta$ and a vertically adapted Lagrangian field $\wh \gamma$ homotopic to $\gamma$ such that the following properties hold.
\begin{itemize}
\item $\wh \zeta$ is $C^0$-close to $\zeta$,
\item $\wh \zeta \pitchfork \wh \gamma$ on $Op(K_1)$.
\item $\wh \zeta = \zeta$ on $Op(K_2)$.
\end{itemize}
Moreover, we can assume that the homotopy between $\gamma$ and $\wh \gamma$ is through vertically adapted fields and is constant on $Op(K_2)$.
\end{theorem}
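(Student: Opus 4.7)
The plan is to mirror the structure of the unadapted argument: first establish a homotopically aligned adapted version, then promote it to an aligned version via the symplectic lift of the homotopical alignment. The new ingredient, relative to Section~\ref{section: homotopically integrable solution}, is that every construction must be carried out inductively over the collar strata so as to preserve horizontal adaptedness of the tectonic field and vertical adaptedness of the Lagrangian distribution (and of the homotopy between $\gamma$ and $\wh\gamma$).

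\textbf{Reduction to a homotopically aligned adapted version.} State the following adapted analogue of Theorem~\ref{theorem: homotopically aligned formal transversalization extension}: under the hypotheses of Theorem~\ref{theorem: aligned formal transversalization extension adapted}, there exist a horizontally adapted homotopically aligned tectonic field $\wh\zeta$ and a vertically adapted Lagrangian field $\wh\gamma$ homotopic to $\gamma$ through vertically adapted fields (rel.\ $Op(K_2)$) with $\wh\zeta\pitchfork\wh\gamma$ on $Op(K_1)$ and $\wh\zeta=\zeta$ on $Op(K_2)$, and such that the homotopical alignment $\wh\Psi_t$ is of product form $\wh\Psi_t^{(k)}\times\mathrm{id}$ in a collar neighborhood of each $\partial_kL$. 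Granted this, reduce to the aligned statement exactly as in the unadapted case by lifting $\wh\Psi_t$ to a family of linear symplectomorphisms $\Phi_t$ of $T(T^*L)|_L$ that fix each cotangent fibre. The collar product form of $\wh\Psi_t$ guarantees that $\Phi_t$ is itself of product form in the collar, so $\Phi_t(\wh\zeta)$ stays horizontally adapted and $\Phi_t(\wh\gamma)$ stays vertically adapted; concatenating homotopies then yields the desired $\wh\zeta$ and $\wh\gamma$.

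\textbf{Proof of the homotopically aligned adapted version.} First apply Theorem~\ref{theorem: formal transversalization extension adapted} to obtain a horizontally adapted tectonic field $\wh\zeta_0$ with $\wh\zeta_0\pitchfork\gamma$ on $Op(K_1)$ and $\wh\zeta_0=\zeta$ on $Op(K_2)$; from here the task is purely to adjust homotopy classes of ridge directions. Carry out the adjustment by induction, working from the deepest boundary stratum outward: on $\partial_kL$, the horizontal adaptedness lets us write $\wh\zeta_0$ in a collar as a product $\lambda_k\times\tau^k$ and $\gamma$ as $\gamma_k\times\nu^k$, so that restriction to $\partial_kL$ makes sense and gives a tectonic field transverse to $\gamma_k$ intrinsically on $\partial_kL$. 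Apply the relative form of Proposition~\ref{prop:modifying-hom} inside $\partial_kL$, rel.\ the neighborhood of $\bigcup_{j>k}\partial_jL\cap\partial_kL$ on which alignment has already been arranged at previous stages, and rel.\ $K_2\cap\partial_kL$. Extend the output trivially along the $k$ collar directions by taking the product with $\tau^k$ (resp.\ $\nu^k$) to obtain a horizontally adapted aligned tectonic field and a homotopy of vertically adapted Lagrangian fields on a neighborhood of $\partial_kL$. After finishing the induction over all boundary strata, finally run the original unadapted Proposition~\ref{prop:modifying-hom} in the interior of $L$ rel.\ $Op(\partial L)\cup Op(K_2)$ to complete the alignment.

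\textbf{Main obstacle.} The delicate point is ensuring simultaneously that (i) the spherical modifications of the ridge structure produced by Proposition~\ref{prop:model-change}/\ref{prop:modifying-hom} remain horizontally adapted, and (ii) the homotopy of $\gamma$ they induce remains vertically adapted. Point (i) is automatic as long as the introduced spherical faults are contained entirely in $\partial_kL$ and are then pushed forward by the collar product; this is possible because Proposition~\ref{prop:model-change} is a purely local construction. Point (ii) requires that the local modification of $\gamma$ from Proposition~\ref{prop:model-change}, which is supported in a compact set $\Omega$, be arranged within a chart of $\partial_kL$ so that, after product with the identity on the collar factors, the resulting field remains of the form $\tilde\gamma_k\times\nu^k$; this again follows because the modification only repolarizes the Lagrangian plane in the $T^*(\partial_kL)$ directions and the collar directions $\nu^k$ are left untouched throughout. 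Once both are verified, the inductive hypothesis (including the product form of $\wh\Psi_t$) is preserved, and the proof concludes exactly as in the unadapted case, with the $C^0$-closeness gathered by taking the successive modifications small enough at each inductive step.
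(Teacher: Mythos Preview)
Your proposal is correct and follows essentially the same route as the paper: reduce to a homotopically aligned adapted version via the symplectic lift of $\wh\Psi_t$, start from the output of Theorem~\ref{theorem: formal transversalization extension adapted}, and then adjust the ridge homotopy classes by induction over the boundary strata $\partial_kL$ (deepest first) using Proposition~\ref{prop:model-change}/\ref{prop:modifying-hom} in collar-product form, finishing in the interior. The paper phrases the key step as choosing the domain $\Omega$ and form $\nu$ in the local model to be product in the collar coordinates, which is exactly your ``apply inside $\partial_kL$ and extend trivially by product'' described from the other side.
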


Theorem \ref{theorem: aligned formal transversalization extension adapted} follows from the same local model for changing the homotopy class of the ridge directions which we used to align the ridge directions in the unadapted case. Indeed, as in Section \ref{section: changing} we first reduce to the homotopically aligned version of Theorem \ref{theorem: aligned formal transversalization extension adapted}. To prove the homotopically aligned version we can start making the necessary local modifications to the $\wh \zeta$ produced by Theorem \ref{theorem: formal transversalization extension adapted} along the boundary $\partial L$ first, then once we have a horizontally adapted solution near $\partial L$ we can extend to the interior as in the unadapted case. To construct the horizontally adapted solution $\wh \zeta$ near $\partial L$ we work inductively over the strata $\partial_k L$, starting with the deepest one $\partial_nL$ in which there is nothing to prove. Whenever we need to adjust the homotopy class of the ridge directions for the tectonic field $\lambda_k$ in $\partial_k L$, we choose a domain $\Omega \subset L$ and a form $\nu$ as in the model \ref{section: the model} which are adapted to the collar structure, i.e. given as a product in the collar co-ordinates. Then not only is the modified tectonic field still adapted, but the homotopy of $\gamma$ is by construction through vertically adapted fields.

\subsection{Adapted integration}

Finally we show how to integrate the $\lambda$ produced by Theorem \ref{aligned formal transversalization adapted} so that the resulting ridgy Lagrangian remains adapted. In fact this follows easily from the parametric versions of the holonomic approximation and wrinkling results which are used in the unadapted case. First observe that since the aligned tectonic field $\lambda$ is horizontally adapted, the introduction of integrable ridges in Lemma \ref{lemma: model} can be achieved with respect to coordinates that are compatible with the collar structure. Hence the resulting integrable tectonic field is horizontally adapted. 

Next we turn to the adapted analogue of Proposition \ref{prop: near ridges}. We recall that the holonomic approximation lemma for 1-holonomic sections \cite{AG18a} holds in parametric form, and moreover holds relative to a closed subset of the parameter space. Hence we can apply this result inductively over the strata $\partial_kL$ so that at each stage of the induction the conclusion of the proposition holds in a neighborhood of $\bigcup_{j \geq k }  \partial_j L$ and moreover such that the resulting ridgy Lagrangian and homotopy of $\gamma$ are adapted in this neighborhood. At the last stage of this inductive process we obtain the desired adapted ridgy Lagrangian in a neighborhood of $\partial L$, which can then be extended to the interior of $L$ as in the unadapted case. 

To conclude we turn to the application of Theorem \ref{thm: wrinkles} in the adapted setting and finish the proof of Theorem \ref{theorem: main theorem adapted}. The $C^0$-approximation result for wrinkled Lagrangian embeddings also holds in parametric form, but only relative to a subset where the embedding is smooth. Therefore, when applying the result in a component of the stratum $\partial_k L$ one will need to let the cuspidal singularities die out as you move away from this component, but there is a homotopically canonical way of doing so since by construction the cuspidal singularities always come in parallel spheres which can be cancelled against each other, see Figure \ref{birthdeath} 

      \begin{figure}[h]
\includegraphics[scale=0.5]{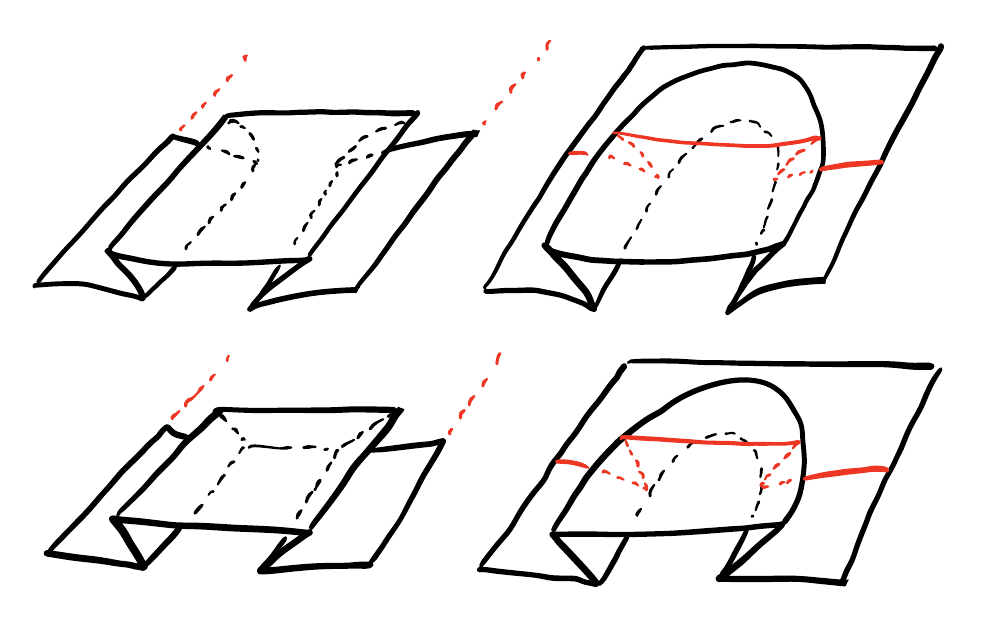}
\caption{Starting with product cuspidal singularities on parallel spheres, we let them die out and then replace the cusps with ridges. }
\label{birthdeath}
\end{figure}

Note that in the insertion of this birth/death local model for the cuspidal singularities
we may lose transversality. Therefore, after replacing these cuspidal singularities with ridges, but before we proceed to the next stage of the induction, we must achieve transversality in a neighborhood of these new ridges. This can be achieved using holonomic approximation just as in Section \ref{section: holonomic}. We can then use wrinkling as before in the complement and replace the cuspidal singularities with ridges to complete the proof.


\begin{thebibliography}{AVGZ85}

\bibitem[AG18a]{AG18a}
D.~\'Alvarez-Gavela,
\newblock Refinements of the holonomic approximation lemma,
\newblock {\em Algebraic \& Geometric Topology} 18 (2018) 2265--2303.

\bibitem[AG18b]{AG18b}
D.~\'Alvarez-Gavela,
\newblock The simplification of singularities of Lagrangian and Legendrian fronts,
\newblock {\em Inventiones Mathematicae,} 214(2) (2018) 641--737.

 \bibitem[AGEN20a]{AGEN20a} D. Alvarez-Gavela, Y. Eliashberg, and D. Nadler, 
 \newblock {\em Stability of arboreal models}, arXiv:2101.04272
 
  \bibitem[AGEN20b]{AGEN20b} D. Alvarez-Gavela, Y. Eliashberg, and D. Nadler, 
 \newblock {\em Positive arborealization of polarized Weinstein manifolds}, arXiv:1912.03439
 
   \bibitem[AGEN21]{AGEN21} D. Alvarez-Gavela, Y. Eliashberg, and D. Nadler, 
 \newblock {\em Reidemeister moves for positive arboreal skeleta}, in preparation.
 
 \bibitem[A90]{A90} V.I.~Arnold, 
 \newblock {\em Singularities of caustics and wavefronts} Kluwer Academic Publishers, 1990.


\bibitem[AGV85]{AGV85}
V.I.~Arnold, S.M.~Gusein-Zade, A.N.~Varchenko,
\newblock Singularities of Differentiable Maps, Volume I, 
\newblock {\em Springer}, (1985).


\bibitem[E70]{E70}
Y.M.~Eliashberg,
\newblock On singularities of folding type, 
\newblock {\em Izvestiya Rossiiskoi Akademii Nauk. Seriya Matematicheskaya,} 4(5) (1970).


\bibitem[EM97]{EM97}
Y.M.~Eliashberg, Y.M.~Mishachev 
\newblock Wrinkling of smooth mappings and its applications. I,
\newblock {\em Inventiones mathematicae}, 1997.

\bibitem[EM98]{EM98}
Y.M.~Eliashberg, Y.M.~Mishachev 
\newblock Wrinkling of smooth mappings. III. Foliations of codimension greater than one,
\newblock{\em Topological Methods in Nonlinear Analysis}, 11 (1998), no. 2, 321-350.


\bibitem[En97]{En97}
M.~Entov,
\newblock Surgery on Lagrangian and Legendrian Singularities,
\newblock {\em Geometric and Functional Analysis}, 9(2) (1999) 298--352. 

\bibitem[En98]{En98}
M.~Entov,
\newblock On the Necessity of Legendrian Fold Singularities,
\newblock {\em International Mathematics Research Notices}, 2 (1998).

\bibitem[FP98]{FP98}
E.~Ferrand, E.P.~Pushkar,
\newblock Non cancellation of singularities on wave fronts,
\newblock {\em Comptes Rendus de l'Acad\'emie des Sciences, Series I Math }, 327(8) (1998) 827--831.

\bibitem[FP06]{FP06}
E.~Ferrand, E.P.~Pushkar,
\newblock Morse theory and global coexistence of singularities on wave fronts,
\newblock {\em Journal of the London Mathematical Society }, 7492) (2006), 527-544.

\bibitem[G09]{G09}
M.~Gromov,
\newblock Singularities, expanders and topology of maps. Part 1 : Homology versus volume in the spaces of cycles,
\newblock {\em Geometric and Functional Analaysis}, 19(3) (2009) 743--841.

\bibitem[G10]{G10}
M.~Gromov,
\newblock Singularities, expanders and topology of map. Part 2: From combinatorics to topology via algebraic isoperimetry.
\newblock {\em Geometric and Functional Analysis}, 20 (2010) 416--526.


\bibitem[N17]{N17}
D.~Nadler,
\newblock Arboreal singularities,
\newblock {\em Geometry \& Topology }, 21 (2017) 1231--1274

\bibitem[N15]{N15}
D.~Nadler,
\newblock Non-characteristic expansions of Lagrangian singularities,
\newblock {\em preprint }, arxiv:1507.01513

\bibitem[St18]{St18} L. Starkston, 
\newblock {\em Arboreal Singularities in  Weinstein Skeleta}, 
\newblock Selecta Mathematica, 24, 4105–4140 (2018)












\end{thebibliography}
\end{document}